\theoremstyle{plain}
\newtheorem{theorem}{Theorem}[section]
\newtheorem{proposition}[theorem]{Proposition}
\newtheorem{corollary}[theorem]{Corollary}
\newtheorem{lemma}[theorem]{Lemma}
\theoremstyle{remark}
\newtheorem{remark}[theorem]{Remark}
\newtheorem{definition}[theorem]{Definition}
\numberwithin{equation}{section}
\newcommand{\cdummy}{\cdot}
\newcommand{\mathd}{\mathrm{d}}
\newcommand{\nin}{\not\in}
\newcommand{\nocomma}{}
\newcommand{\tmem}[1]{{\em #1\/}}
\newcommand{\tmmathbf}[1]{\ensuremath{\boldsymbol{#1}}}
\newcommand{\tmop}[1]{\ensuremath{\operatorname{#1}}}
\newcommand{\tmstrong}[1]{\textbf{#1}}
\newcommand{\tmtextit}[1]{{\itshape{#1}}}
\newcommand{\mnote}[1]{\marginpar{\scriptsize\it\begin{minipage}[t]{1.4in} \raggedright #1\end{minipage}}}
\def\1{\ifmmode {1\hskip -3pt \rm{I}} \else {\hbox {$1\hskip -3pt
      \rm{I}$}}\fi}
\begin{document}

\title{Glauber dynamics for the quantum Ising model in a transverse
  field on a regular tree}

\author[F.~Martinelli]{Fabio Martinelli} 
\address{F. Martinelli, Dipartimento di Matematica,
  Universit\`a Roma Tre, Largo S.\ Murialdo 1, 00146 Roma, Italy.
  e--mail: {\tt martin@mat.uniroma3.it}} 
\author[M.~Wouts]{Marc Wouts}
\address{M. Wouts, D\'epartment de Math\'ematiques, Universit\'e Paris 13, 99 Avenue J.B. Cl\'ement, F-93430
  Villetaneuse, France. e--mail: {\tt wouts@math.univ-paris13.fr}}
\thanks{This work was supported by the European Research
Council throughout the ``Advanced Grant'' PTRELSS 228032.}
\maketitle

\begin{abstract}
  Motivated by a recent use of Glauber dynamics for Monte-Carlo simulations of
  path integral representation of quantum spin models {\cite{KRSZ08}}, we
  analyse a natural Glauber dynamics for the quantum Ising model with a
  transverse field on a finite graph $G$. We establish strict monotonicity
  properties of the equilibrium distribution and we extend (and improve) the
  censoring inequality of Peres and Winkler to the quantum setting. Then we
  consider the case when $G$ is a regular $b$-ary tree and prove the same fast
  mixing results established in {\cite{MSW04}} for the classical Ising model.
  Our main tool is an inductive relation between conditional marginals (known
  as the ``cavity equation'') together with sharp bounds on the operator norm
  of the derivative at the stable fixed point. It is here that the main
  difference between the quantum and the classical case appear, as the cavity
  equation is formulated here in an infinite dimensional vector space, whereas
  in the classical case marginals belong to a one-dimensional space.\\
  
  {\noindent}\tmtextit{{\noindent}AMS 2000 subject classification:} Primary:
  60K35; Secondary: 82C10.\\
  {\noindent}\tmtextit{Keywords:} Quantum Ising model, Glauber dynamics, stochastic monotonicity, cavity equation, mixing time, spectral gap.
\end{abstract}

\section{Introduction}
In the last years there has been a somewhat intense research in probabilistic representation of quantum spin systems with very interesting progresses exploiting 
stochastic geometry methods. We refer the interested reader to  \cite{Io09} and references therein. More recently, physicists started to consider quantum Heat Bath algorithms applied to the imaginary time path integral representation of e.g.~the quantum ising model in a transverse field (see \cite{KRSZ08} and references therein). 
From a mathematical point of view that accounts to introduce a so-called Glauber dynamics for the dynamical variables appearing in the path integral representation of the model, a subject that, in the context of \emph{classical} spin models, has been thoroughly investigated in the mathematical literature in the past fifteen years because of its many different facets and connections with different areas in mathematics, physics and theoretical computer science. In the quantum setting very little is known rigorously and we are only aware of a recent contribution  \cite{CDP} where, for a particular choice of the Glauber chain, exponentially fast approach to the imaginary time equilibrium measure is proved via a kind of Bochner-Bakry-Emery method in a high temperature regime. 

Here we investigate the simple heat bath dynamics for the imaginary time path integral representation of the quantum Ising model with a transverse field on a finite connected graph $G$. We first focus on properties of the single site measure and establish key \emph{monotonicity} properties as a function of the longitudinal magnetic field and of  the boundary conditions. A remarkable consequence of monotonicity is the validity of the so-called Peres-Winkler censoring inequality \cite{PeresUBC} which we re-establish and extend to the quantum case. Then, motivated by the analysis of the same model made by physicists on the Bethe lattice \cite{KRSZ08}, we specialize to the case when the graph $G$ is a regular $b$-ary tree and we succeed in extending to the quantum case much of the analysis made in \cite{MSW04} for the classical setting. More precisely we prove that, for any boundary condition $\tau$, there exists exponents $\kappa=\kappa(\tau)$, $\gamma<1$ such that, if $\kappa \gamma b<1$,  then the Glauber chain on a finite subtree of depth $\ell$ with boundary condtion $\tau$ at the leaves, has a spectral gap uniformly positive in $\ell$. In \cite{MSW04} the same uniformity was proved for the logarithmic Sobolev constant and, as a consequence, the mixing time cound not grow faster than linear in $\ell$. In the quantum setting the  logarithmic Sobolev constant  is easily seen to be infinite. Nevertheless, using our extension of censoring and always assuming $\kappa \gamma b < 1$, we prove that the mixing time grows linearly in $\ell$. Finally, in complete analogy with the fast mixing result inside the pure plus phase for the classical Ising model, we show that, if the boundary condition is the homogeneous \emph{plus} configuration then the key exponent $\kappa$ satisfies $\kappa \leqslant 1/b$ for all values of the thermodynamic parameters. Therefore, in this case, $\kappa \gamma b<1$. The same holds  for \emph{all} boundary conditions when the thermodynamic parameters are in the uniqueness regime.

All the main technical efforts in the paper are concentrated on
bounding the exponent $\kappa$ and it is at this point that the
analysis of the quantum case becomes much more involved than its
classical analog. Computing the exponent $\kappa$ requires in fact a
detailed study of the quantum cavity equation for the one site
conditional marginal distributions and, in particular, of the
derivative operator at the fixed point corresponding to the ``plus'' phase. In the classical case such marginals are parametrized by just one number and therefore the corresponding cavity equation is just a recursive non-linear equation in $\mathbf R$. In the quantum case instead the cavity equation becomes a recursive functional equation in a  infinite dimensional vector space.
  
{\tableofcontents}

\section{The quantum Ising model}

\subsection{The matrix formulation}

We consider a finite graph $G = (V, E)$ with vertex set $V$ and edge set $E$.
A classical spin configuration on $V$ is $\sigma = (\sigma_i)_{i \in V} \in
\{\pm 1\}^V$. Now we define $\mathcal{H}$, a $2^{|V|}$-dimensional Hilbert
space, by giving the orthonormal basis $\left. \left( | \sigma \right\rangle
\right)_{\sigma \in \{\pm 1\}^V}$. We also denote $\langle \sigma |$ the
transpose of any basis vector $| \sigma \rangle$. Given $i \in V$, we define
two linear operators on $\mathcal{H}$ by their action on the orthonormal
basis:
\begin{eqnarray*}
  \sigma_i^z | \sigma \rangle & = & \sigma_i | \sigma \rangle\\
  \sigma_i^x | \sigma \rangle & = & | \sigma^i \rangle
\end{eqnarray*}
where $\sigma^i$ is the same spin configuration as $\sigma$, except for the
spin at $i$ which is flipped. Given a set of parameters $\beta, h, \lambda$
($\beta>0$ is the inverse temperature, $h\in{\mathbf R}$ the longitudinal and $\lambda \geqslant 0$ the
transverse field) we define the Hamiltonian operator
\begin{eqnarray*}
  H & = & - \sum_{e =\{i, j\} \in E} \sigma_i^z \sigma_j^z - h \sum_{i \in V}
  \sigma_i^z - \lambda \sum_{i \in V} \sigma_i^x
\end{eqnarray*}
and define the average of an observable $O$ (a linear operator on
$\mathcal{H}$) at inverse temperature $\beta$ as
\begin{eqnarray*}
  \langle O \rangle_{\beta, h, \lambda} & = & \frac{\tmop{Tr} \left( Oe^{-
  \beta H} \right)}{\tmop{Tr} \left( e^{- \beta H} \right)} .
\end{eqnarray*}

\subsection{The path-integral representation of the quantum Ising model}

We introduce first a reference measure $\varphi_{I, \lambda}$ that corresponds
to a single free spin on some interval $I \subset {\mathbf R}$. A single spin
is a cadlag function $\tmmathbf{\sigma}: I \rightarrow \left\{ \pm 1
\right\}$. We call $\Sigma_I$ the set of single spins, and endow it with the
usual Skorohod topology. Note that two spin configurations are close when the
have the same initial value, same number of flips and close time location for
the flips. This set is a Polish space as the set of spin configurations with
jumps at fractional times is countable and dense in $\Sigma$. The
corresponding $\sigma$-algebra is generated by the events $\left\{
\tmmathbf{\sigma} \left( t \right) = + \right\}$, for $t \in I$. We will also
consider the $L^1$-distance
\begin{eqnarray*}
  \left\| \tmmathbf{\sigma}-\tmmathbf{\sigma}' \right\|_1 & = & \int_I \left|
  \tmmathbf{\sigma} \left( t \right) -\tmmathbf{\sigma}' \left( t \right)
  \right| \mathd t .
\end{eqnarray*}
between two spin configurations. Now we define the distribution $\varphi_{I,
\lambda}$ as follows: we consider a Poisson point process with intensity
$\lambda$ on $I$, and define $\tmmathbf{\sigma}$, with equal probability, as
one of the two configurations that flip exactly at the points of the Poisson
process.

Now we introduce spin interactions and external fields. Given a bounded
interval $I$ and two bounded fonctions $\tmmathbf{\sigma}, \tmmathbf{\tau}: I
\rightarrow {\mathbf R}$ defined Lebesgue a.s., we call
\begin{eqnarray}
  \tmmathbf{\sigma} \underset{I}{\bullet} \tmmathbf{\tau} & = & \int_I
  \tmmathbf{\sigma}(t)\tmmathbf{\tau}(t) \mathd t .  \label{eq:dot}
\end{eqnarray}
When $I$ is clear from the context (usually $I = [0, \beta]$), we drop it from
the notation. Note that the product defined by (\ref{eq:dot}) is additive in
the sense that, for $I_1, I_2$ disjoint,
\begin{eqnarray*}
  \tmmathbf{\sigma} \underset{I_1 \cup I_2}{\bullet} \tmmathbf{\tau} & = &
  \tmmathbf{\sigma} \underset{I_1}{\bullet} \tmmathbf{\tau}+\tmmathbf{\sigma}
  \underset{I_2}{\bullet} \tmmathbf{\tau}.
\end{eqnarray*}
A spin configuration is $\tmmathbf{\sigma}: V \times I \rightarrow \left\{ \pm
1 \right\}$ such that $\tmmathbf{\sigma}_i \in \Sigma_I$, for all $i \in V$.
Consider some $\beta > 0$ (inverse temperature), $\tmmathbf{h}: V \times
\left[ 0, \beta \right] \rightarrow {\mathbf R}$ integrable (the longitudinal
field), $\lambda \geqslant 0$ (the transverse field). We define two probability
measures on the set of spins configurations:
\begin{eqnarray*}
  \mu_{G ; \beta, \tmmathbf{h}, \lambda} (\mathd \tmmathbf{\sigma}) & = &
  \frac{1}{Z_{G ; \beta, \tmmathbf{h}, \lambda}} \exp \left( \sum_{i \sim j}
  \tmmathbf{\sigma}_i \bullet \tmmathbf{\sigma}_j + \sum_i \tmmathbf{h}_i
  \bullet \tmmathbf{\sigma}_i \right) \prod_{i \in V} \varphi_{\left[ 0, \beta
  \right], \lambda} (\mathd \tmmathbf{\sigma}_i)\\
  \mu^{\tmop{per}}_{G ; \beta, \tmmathbf{h}, \lambda} (\mathd
  \tmmathbf{\sigma}) & = & \frac{\tmmathbf{1}_{\{\tmmathbf{\sigma}(0)
  =\tmmathbf{\sigma}(\beta)\}}}{Z^{\tmop{per}}_{G ; \beta, \tmmathbf{h},
  \lambda}} \exp \left( \sum_{i \sim j} \tmmathbf{\sigma}_i \bullet
  \tmmathbf{\sigma}_j + \sum_i \tmmathbf{h}_i \bullet \tmmathbf{\sigma}_i
  \right) \prod_{i \in V} \varphi_{\left[ 0, \beta \right], \lambda} (\mathd
  \tmmathbf{\sigma}_i)
\end{eqnarray*}
where $Z_{G ; \beta, \tmmathbf{h}, \lambda}$ and $Z^{\tmop{per}}_{G ; \beta,
\tmmathbf{h}, \lambda}$ are the appropriate normalization constant that turn
$\mu_{G ; \beta, \tmmathbf{h}, \lambda}$ and $\mu_{G ; \beta, \tmmathbf{h},
\lambda}^{\tmop{per}}$ into probability measures on the set of spin
configurations. They exists as the argument of the exponential is trivially
bounded by $\beta \left| E \right| + \beta \left\| \tmmathbf{h}
\right\|_{1} \left| V \right|$. The boundary condition $\tmmathbf{\sigma}
\left( 0 \right) =\tmmathbf{\sigma} \left( \beta \right)$ is called
{\tmem{periodic imaginary time boundary condition}}. Note that we could also
allow $\lambda : V \times I \rightarrow {\mathbf R}^+$.

The above measure are often approximated, in the physics literature, by a
discrete Ising model with vertex set $V \times \left\{ 0, \ldots, N \right\}$
and edge set $E \times \left\{ 0, \ldots, N \right\} \cup V \times \left\{
\left( 0, 1 \right), \ldots, \left( N - 1, N \right), \left( N, 0 \right)
\right\}$. This is usually referred to as a consequence of the Suzuki-Trotter
formula. The strength of interaction is different for transversal edges than
for longitudinal edges. Although we do not use this approximation (which
corresponds to enabling the discontinuities only at times $\beta k / N$), it
explains that the above two measures share most of the properties of discrete
Ising models.

\begin{proposition}
  {\tmstrong{[DLR Equation]}}. Let $I = \left[ \beta_1, \beta_2 \right]
  \subset \left[ 0, \beta \right]$ be an interval and $A \subset V$. Let
  $\tmmathbf{\sigma} \sim \mu_{G ; \beta, \tmmathbf{h}, \lambda}$.
  Conditionally on $\tmmathbf{\sigma}_{\left| \left( A \times I \right)^c
  \right.}$, the distribution of $\tmmathbf{\sigma}_{\left| A \times I
  \right.}$ is the quantum Ising model $\mu_{\left( A, E \right) ; I,
  \tmmathbf{h}', \lambda} \left( . | \tmmathbf{\sigma}_A \left( \beta_1
  \right), \tmmathbf{\sigma}_A \left( \beta_2 \right) \right)$ with external
  field $\tmmathbf{h}' =\tmmathbf{h}+ \sum_{i \in A, j \nin A}
  \tmmathbf{\sigma}_j \delta_i$.
\end{proposition}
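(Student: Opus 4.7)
The plan is to unfold the explicit definition of $\mu_{G;\beta,\tmmathbf{h},\lambda}$ and verify that, after conditioning on $\tmmathbf{\sigma}_{|(A\times I)^c}$, the resulting density agrees with the claimed one. First, using the additivity of $\bullet$ over disjoint intervals, I split every term in the exponent according to whether it lives in $A\times I$ or not. Edges $\{i,j\}\in E$ with both $i,j\in A$ contribute $\tmmathbf{\sigma}_i\underset{I}{\bullet}\tmmathbf{\sigma}_j$ plus a piece depending only on $\tmmathbf{\sigma}_{|(A\times I)^c}$; edges with both endpoints outside $A$ depend only on the conditioning; and mixed edges with $i\in A$, $j\nin A$ give $\tmmathbf{\sigma}_i\underset{I}{\bullet}\tmmathbf{\sigma}_j$, which, once $\tmmathbf{\sigma}_j$ is fixed, reads as the effective field contribution $(\tmmathbf{\sigma}_j\delta_i)\underset{I}{\bullet}\tmmathbf{\sigma}_i$ on $I$. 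Similarly the single-site field term restricted to $i\in A$ splits as $\tmmathbf{h}_i\underset{I}{\bullet}\tmmathbf{\sigma}_i$ plus a constant. Collecting these pieces produces exactly the Hamiltonian of the quantum Ising model on $(A,E)$ over $I$ with external field $\tmmathbf{h}'=\tmmathbf{h}+\sum_{i\in A,\, j\nin A,\, j\sim i}\tmmathbf{\sigma}_j\delta_i$, up to an additive constant depending only on the conditioning.

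Next, I handle the product of reference measures $\prod_{i\in V}\varphi_{[0,\beta],\lambda}$. Independence across sites reduces the problem to a single site $i\in V$; for $i\nin A$ the reference factor is untouched by the conditioning. For $i\in A$, under $\varphi_{[0,\beta],\lambda}$ the flip times of $\tmmathbf{\sigma}_i$ form a rate-$\lambda$ Poisson point process on $[0,\beta]$ and the initial sign is uniform. By the independent-increments property, conditionally on the flips and sign on $[0,\beta]\setminus I$ --- equivalently on $\tmmathbf{\sigma}_{i|[0,\beta]\setminus I}$ --- the restriction $\tmmathbf{\sigma}_{i|I}$ has the law of a Poisson-driven spin trajectory on $I$ conditioned to equal $\tmmathbf{\sigma}_i(\beta_1)$ and $\tmmathbf{\sigma}_i(\beta_2)$ at the endpoints. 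Since a rate-$\lambda$ Poisson process has no atom at the deterministic times $\beta_1,\beta_2$, these endpoint values are a.s.\ equal to the one-sided limits from outside $I$, and are therefore measurable with respect to $\tmmathbf{\sigma}_{|(A\times I)^c}$. This conditional law is by definition $\varphi_{I,\lambda}(\mathd\tmmathbf{\sigma}_i\mid\tmmathbf{\sigma}_i(\beta_1),\tmmathbf{\sigma}_i(\beta_2))$.

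Combining the two decompositions and absorbing into the normalisation every factor that depends only on the conditioning, the conditional density of $\tmmathbf{\sigma}_{|A\times I}$ is proportional to
\[
\exp\!\left(\sum_{\substack{\{i,j\}\in E \\ i,j\in A}}\tmmathbf{\sigma}_i\underset{I}{\bullet}\tmmathbf{\sigma}_j+\sum_{i\in A}\tmmathbf{h}'_i\underset{I}{\bullet}\tmmathbf{\sigma}_i\right)\prod_{i\in A}\varphi_{I,\lambda}\!\left(\mathd\tmmathbf{\sigma}_i\mid \tmmathbf{\sigma}_i(\beta_1),\tmmathbf{\sigma}_i(\beta_2)\right),
\]
which is exactly $\mu_{(A,E);I,\tmmathbf{h}',\lambda}(\cdot\mid\tmmathbf{\sigma}_A(\beta_1),\tmmathbf{\sigma}_A(\beta_2))$. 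The only step requiring any care is the regular conditional version of the Poisson decomposition at the boundary times $\beta_1,\beta_2$; the cadlag regularity together with the absence of Poisson atoms at deterministic times makes this routine, so the main content is really the bookkeeping of terms in the first paragraph.
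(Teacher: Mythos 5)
Your argument is correct and is precisely the expansion of the paper's one-line proof, which invokes exactly the same two ingredients: additivity of the $\bullet$ product over disjoint time intervals (your first paragraph, yielding the effective field $\tmmathbf{h}'$) and independence of the restrictions of the Poisson point process to disjoint regions (your second paragraph, yielding the conditioned reference measure $\varphi_{I,\lambda}(\cdot\mid\tmmathbf{\sigma}_i(\beta_1),\tmmathbf{\sigma}_i(\beta_2))$). The paper leaves all the bookkeeping implicit; you have simply carried it out.
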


\begin{proof}
  This is a consequence of the independence of the restrictions of the Poisson
  point process to different regions together with the additivity of the
  product (\ref{eq:dot}).
\end{proof}

\begin{proposition}
  Let $\beta > 0$, $h \in {\mathbf R}$ and $\lambda \geqslant 0$. Then, for any
  observable $O$,
  \begin{eqnarray}
    \langle O \rangle_{\beta, h, \lambda} & = & \frac{\mu_{V ; \beta, h,
    \lambda} (\langle \tmmathbf{\sigma}(\beta) |O|\tmmathbf{\sigma}(0)
    \rangle)}{\mu_{V ; \beta, h, \lambda} (\tmmathbf{\sigma}(0)
    =\tmmathbf{\sigma}(\beta))}  \label{avgOnd}
  \end{eqnarray}
  while, for any diagonal observable $O$,
  \begin{eqnarray}
    \langle O \rangle_{\beta, h, \lambda} & = & \mu^{\tmop{per}}_{V ; \beta,
    h, \lambda} (\langle \tmmathbf{\sigma}(\beta) |O|\tmmathbf{\sigma}(0)
    \rangle) . 
  \end{eqnarray}
\end{proposition}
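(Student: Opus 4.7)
My plan is to derive both identities from the Lie--Trotter product formula followed by the passage to the continuous imaginary-time limit, in the spirit of the Suzuki--Trotter comment that precedes the proposition. I would split $H=H_0+V$, with $H_0=-\sum_{\{i,j\}\in E}\sigma_i^z\sigma_j^z-h\sum_i\sigma_i^z$ diagonal in the $\sigma^z$-basis and $V=-\lambda\sum_i\sigma_i^x$, write $e^{-\beta H}=\lim_{N\to\infty}(e^{-\beta H_0/N}e^{-\beta V/N})^N$, and insert a resolution of the identity $\sum_\sigma|\sigma\rangle\langle\sigma|$ between consecutive factors in
\[
\tmop{Tr}\bigl(Oe^{-\beta H}\bigr)=\sum_{\sigma,\sigma'}\langle\sigma|O|\sigma'\rangle\langle\sigma'|e^{-\beta H}|\sigma\rangle.
\]
This turns the trace into a sum over $N$-step histories $(\sigma^{(k)})_{k=0}^N$ and produces two types of weight factors at each discretisation level.

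The diagonal factor $\prod_k e^{-\beta H_0(\sigma^{(k)})/N}$ is a Riemann sum, and as the histories converge to c\`adl\`ag paths $\tmmathbf{\sigma}:[0,\beta]\to\{\pm1\}^V$, it converges to $\exp\bigl(\sum_{i\sim j}\tmmathbf{\sigma}_i\bullet\tmmathbf{\sigma}_j+\sum_i h\bullet\tmmathbf{\sigma}_i\bigr)$, which matches the exponential weight in $\mu_{V;\beta,h,\lambda}$. The off-diagonal factor factorises over sites as a product of single-spin matrix elements $\langle\sigma_i^{(k)}|e^{\beta\lambda\sigma^x/N}|\sigma_i^{(k-1)}\rangle$ equal to $\cosh(\beta\lambda/N)$ if the spin does not flip between consecutive slices and $\sinh(\beta\lambda/N)$ if it does. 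Pulling out the global prefactor $\cosh(\beta\lambda/N)^{N|V|}$ leaves weight $\tanh(\beta\lambda/N)^{m_i}$ for each site, where $m_i$ is the number of flips; a standard argument then shows that for each site the discrete flip pattern converges weakly, in the Skorohod topology, to a Poisson point process of intensity $\lambda$ on $[0,\beta]$, i.e.\ to the law $\varphi_{[0,\beta],\lambda}$ (the equally likely sign chosen for $\sigma^{(0)}$ provides the $1/2$ on the initial value).

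Assembling these pieces yields a representation
\[
\tmop{Tr}\bigl(Oe^{-\beta H}\bigr)=C\int\langle\tmmathbf{\sigma}(\beta)|O|\tmmathbf{\sigma}(0)\rangle\exp\Bigl(\sum_{i\sim j}\tmmathbf{\sigma}_i\bullet\tmmathbf{\sigma}_j+\sum_i h\bullet\tmmathbf{\sigma}_i\Bigr)\prod_i\varphi_{[0,\beta],\lambda}(\mathd\tmmathbf{\sigma}_i)
\]
for a constant $C$ independent of $O$, and this is precisely $CZ_{V;\beta,h,\lambda}\,\mu_{V;\beta,h,\lambda}(\langle\tmmathbf{\sigma}(\beta)|O|\tmmathbf{\sigma}(0)\rangle)$. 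Setting $O$ equal to the identity (so that $\langle\tmmathbf{\sigma}(\beta)|O|\tmmathbf{\sigma}(0)\rangle=\mathbf 1_{\tmmathbf{\sigma}(0)=\tmmathbf{\sigma}(\beta)}$) computes $\tmop{Tr}(e^{-\beta H})$, and the constants $C$ and $Z_{V;\beta,h,\lambda}$ cancel in the ratio defining $\langle O\rangle_{\beta,h,\lambda}$; this yields~(\ref{avgOnd}). For a diagonal $O$ one has $\langle\tmmathbf{\sigma}(\beta)|O|\tmmathbf{\sigma}(0)\rangle=O(\tmmathbf{\sigma}(0))\mathbf 1_{\tmmathbf{\sigma}(0)=\tmmathbf{\sigma}(\beta)}$, and since $\mu^{\tmop{per}}$ is $\mu_{V;\beta,h,\lambda}$ conditioned on $\{\tmmathbf{\sigma}(0)=\tmmathbf{\sigma}(\beta)\}$, the second identity follows from~(\ref{avgOnd}) at once.

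The step I expect to be the main technical obstacle is making the continuous-time limit rigorous: one must simultaneously control the Riemann-sum convergence of the diagonal weight and the Skorohod convergence of the discrete flip process to the Poisson process at each site. Both are classical in the path-integral treatment of quantum spin systems, and the uniform bound $\beta|E|+\beta\|\tmmathbf{h}\|_1|V|$ on the exponential weight already noted in the paper provides the domination needed to exchange the limit with the sum and the integral.
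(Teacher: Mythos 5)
Your proof sketch is correct, but it takes a genuinely different route from the paper's. You use the Lie--Trotter product formula, split $H=H_0+V$, insert resolutions of the identity, and argue that the discrete weight on histories converges to the continuous path-integral weight, with the off-diagonal factors converging to the Poisson flip process $\varphi_{[0,\beta],\lambda}$ in the Skorohod topology. The paper, by contrast, never discretises: it defines the kernel $W_{\beta,h,\lambda}$ \emph{directly} as a continuous-time path integral, observes that $\beta\mapsto W_{\beta,h,\lambda}$ is a semigroup (via independence and additivity of the Poisson process on disjoint intervals), computes the generator from the $O(\beta^2)$ expansion of $W_\beta$ as $\beta\to0$, and concludes $W_{\beta,h,\lambda}=e^{-\beta\lambda|V|}e^{-\beta H}$ by uniqueness for the linear ODE. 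This sidesteps exactly the step you flag as your main technical obstacle --- the simultaneous control of the Riemann-sum limit and the weak convergence of the discrete flip process --- at the cost of verifying the short-time asymptotics of the matrix elements. Your Trotterisation route is the one physicists usually have in mind (and the one the paper explicitly mentions and declines to use); once the limit is made rigorous it yields the same result, and it has the pedagogical advantage of making the origin of the $\prod_i\varphi_{[0,\beta],\lambda}$ factor transparent. Two small bookkeeping remarks: in the trace $\sum_{\sigma,\sigma'}\langle\sigma|O|\sigma'\rangle\langle\sigma'|e^{-\beta H}|\sigma\rangle$ it is the label $\sigma'$ that should be matched with $\tmmathbf{\sigma}(0)$ and $\sigma$ with $\tmmathbf{\sigma}(\beta)$ in order to reproduce precisely the orientation $\langle\tmmathbf{\sigma}(\beta)|O|\tmmathbf{\sigma}(0)\rangle$ in the statement; and your constant $C$ (coming from the $\cosh(\beta\lambda/N)^{N|V|}\to e^{\beta\lambda|V|}$ prefactor and the factor $2^{|V|}$ normalising the initial sign) is exactly the paper's $e^{\beta\lambda|V|}2^{|V|}$, so the cancellation you invoke is indeed exact, not merely formal.
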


\begin{proof}
  We define a matrix $W_{\beta, h, \lambda}$ by the prescription of its
  coordinates
  \begin{eqnarray*}
    \langle \sigma |W_{\beta, h, \lambda} | \sigma' \rangle & = & 2^{|V|} \int
    \exp \left( \sum_{i \sim j} \tmmathbf{\sigma}_i \bullet
    \tmmathbf{\sigma}_j + h \sum_i \tmmathbf{1} \bullet \tmmathbf{\sigma}_i
    \right) \tmmathbf{1}_{\{\tmmathbf{\sigma}(0) = \sigma,
    \tmmathbf{\sigma}(\beta) = \sigma' \}} \prod_{i \in V} \mathd \varphi_{[0,
    \beta], \lambda} (\tmmathbf{\sigma}_i) .
  \end{eqnarray*}
  Once we prove that
  \begin{eqnarray}
    W_{\beta, h, \lambda} & = & e^{- \beta \lambda |V|} \times \exp \left( -
    \beta H \right)  \label{Wexp}
  \end{eqnarray}
  the claim follows from the definition of the average of an observable:
  \begin{gather*}
    \langle O \rangle_{\beta, h, \lambda}  =  \frac{\tmop{Tr} \left( Oe^{-
    \beta H} \right)}{\tmop{Tr} \left( e^{- \beta H} \right)}
     =  \frac{\sum_{\sigma} \langle \sigma |O e^{- \beta H} | \sigma
    \rangle}{\sum_{\sigma} \langle \sigma | e^{- \beta H} | \sigma \rangle}\\
     =  \frac{\sum_{\sigma, \sigma'} \langle \sigma |O| \sigma' \rangle
    \langle \sigma' | e^{- \beta H} | \sigma \rangle}{\sum_{\sigma} \langle
    \sigma | e^{- \beta H} | \sigma \rangle}
  \end{gather*}
  as
  \begin{gather*}
    \langle \sigma' | e^{- \beta H} | \sigma \rangle =  e^{\beta \lambda
    |V|} \langle \sigma' |W_{\beta, h, \lambda} | \sigma \rangle\\
     =  e^{\beta \lambda |V|} 2^{|V|} Z_{V ; \beta, h, \lambda}\ \mu_{V ;
    \beta, h, \lambda} (\tmmathbf{\sigma}(0) = \sigma',
    \tmmathbf{\sigma}(\beta) = \sigma) . 
  \end{gather*}
  So we focus now on the proof of (\ref{Wexp}). According to the additivity of
  the product (\ref{eq:dot}) and to the independence of the restriction of the
  Poisson process to $\left[ 0, \beta_1 \right]$ and $\left[ \beta_1, \beta_2
  \right]$, we have
  \begin{eqnarray}
    W_{\beta_1 + \beta_2, h, \lambda} & = & W_{\beta_1, h, \lambda} \times
    W_{\beta_2, h, \lambda} \text{,} \forall \beta_1, \beta_2 \geqslant 0. 
    \label{W:mult}
  \end{eqnarray}
  Now we compute the asymptotic of $\langle \sigma |W_{\beta, h, \lambda} |
  \sigma' \rangle$ when $\beta \rightarrow 0$. The probability that the
  Poisson point process has two points or more on $V \times [0, \beta]$ is $O
  (\beta^2)$, therefore when $\sigma$ and $\sigma'$ differ at two points or
  more, $\langle \sigma |W_{\beta, h, \lambda} | \sigma' \rangle = O
  (\beta^2)$. When $\sigma' = \sigma^i$, the probability that
  $\tmmathbf{\sigma}_i$ is discontinuous is $\beta \lambda + O (\beta^2)$,
  therefore $\langle \sigma |W_{\beta, h, \lambda} | \sigma' \rangle = \beta
  \lambda + O (\beta^2) = \beta \langle \sigma |H| \sigma' \rangle + O
  (\beta^2)$. Finally, when $\sigma' = \sigma$, up to $O (\beta^2)$ only the
  configurations with no flips (probability $\exp (- \beta \lambda |V|)$)
  contribute to $\langle \sigma |W_{\beta, h, \lambda} | \sigma' \rangle$,
  therefore in that case
  \begin{eqnarray*}
    \langle \sigma |W_{\beta, h, \lambda} | \sigma' \rangle & = & \exp \left(
    \beta \left( \sum_{i \sim j} \sigma_i \sigma_j + h \sum_i \sigma_i \right)
    - \beta \lambda |V| \right) + O (\beta^2)\\
    & = & 1 - \beta \langle \sigma |H| \sigma' \rangle - \beta \lambda |V| +
    O (\beta^2) .
  \end{eqnarray*}
  In other words we have shown that
  \begin{eqnarray}
    W_{\beta, h, \lambda} & = & (1 - \beta \lambda |V|) I - \beta H + O
    (\beta^2)  \label{W:betasmall}
  \end{eqnarray}
  as $\beta \rightarrow 0$, where $I$ is the matrix that corresponds to the
  identity of $\mathcal{H}$. Combining (\ref{W:mult}) and (\ref{W:betasmall})
  we obtain, for any $\beta \geqslant 0$ and any $\sigma'$,
  \begin{eqnarray*}
    \frac{\mathd}{\mathd \beta} W_{\beta, h, \lambda} | \sigma' \rangle & = &
    \left[ \frac{\mathd}{\mathd \beta'} W_{\beta', h, \lambda} \right]_{\beta'
    = 0} \times W_{\beta, h, \lambda} | \sigma' \rangle\\
    & = & \left( - \lambda |V|I - H \right) \times W_{\beta, h, \lambda} |
    \sigma' \rangle .
  \end{eqnarray*}
  This proves (\ref{Wexp}) as $\exp (- \beta \lambda |V| I - \beta H)$ is the
  unique solution to this differential equation.
\end{proof}

\section{The single site spin measure}

In all this section we assume that $V = \{x\}$ is a single site. Most of the
time $\beta, \lambda$ and sometimes $\tmmathbf{h}$ will be  from the context
and they will not appear in the notation. We state several important facts about
this single site measure. In what follows $\|\cdot\|_{\tmop{TV}}$ denotes the usual
variation distance (see also (\ref{TV}).

\begin{proposition}
  \label{prop:gamma}Let $\beta, M < \infty$. There exists $\gamma < 1$ and
  $\Gamma = 1 / \log (3)$ such that, for any $\tmmathbf{h}, \tmmathbf{h}',
  \lambda$ such that $\|\tmmathbf{h}\|_1, \|\tmmathbf{h}' \|_1, \lambda < M$,
  for any $\pi \in \{\emptyset, \tmop{per}\}$, the following inequalities
  holds
  \begin{eqnarray}
    \left\| \mu^{\pi}_{\tmmathbf{h}'} - \mu^{\pi}_{\tmmathbf{h}}
    \right\|_{\tmop{TV}} & \leqslant & \gamma  \label{mu:gamma}\\
    \left\| \mu^{\pi}_{\tmmathbf{h}'} - \mu^{\pi}_{\tmmathbf{h}}
    \right\|_{\tmop{TV}} & \leqslant & \Gamma \|\tmmathbf{h}'
    -\tmmathbf{h}\|_1 .  \label{mu:Gamma}
  \end{eqnarray}
\end{proposition}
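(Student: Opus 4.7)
My plan is to write $\mu^{\pi}_{\tmmathbf{h}}$ as an exponential tilt of a reference probability and exploit the pointwise bound $|\tmmathbf{h}\bullet\tmmathbf{\sigma}|\leq\|\tmmathbf{h}\|_1\leq M$ coming from $|\tmmathbf{\sigma}(t)|=1$. With respect to the product Poisson measure $\varphi_{[0,\beta],\lambda}$, the density of $\mu^{\pi}_{\tmmathbf{h}}$ equals $e^{\tmmathbf{h}\bullet\tmmathbf{\sigma}}/Z^{\pi}_{\tmmathbf{h}}$ in the free case $\pi=\emptyset$, and is multiplied by the $\tmmathbf{h}$-independent factor $\tmmathbf{1}_{\{\tmmathbf{\sigma}(0)=\tmmathbf{\sigma}(\beta)\}}$ in the periodic case. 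This indicator can therefore be absorbed into a normalized reference $\varphi^{\pi}$ (equal to $\varphi_{[0,\beta],\lambda}$ for $\pi=\emptyset$ and to its conditional version given the periodic event for $\pi=\mathrm{per}$), and both cases are handled in parallel.

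For (\ref{mu:gamma}) the pointwise bound gives $e^{\tmmathbf{h}\bullet\tmmathbf{\sigma}}\in[e^{-M},e^M]$ and, by averaging, $Z^{\pi}_{\tmmathbf{h}}/Z^{\pi}_{0}\in[e^{-M},e^M]$ where $Z^{\pi}_{0}$ is the partition function with $\tmmathbf{h}\equiv 0$. Hence the density of $\mu^{\pi}_{\tmmathbf{h}}$ with respect to $\varphi^{\pi}$ lies everywhere in $[e^{-2M},e^{2M}]$, and the same holds for $\mu^{\pi}_{\tmmathbf{h}'}$. The identity $\|\mu_1-\mu_2\|_{\tmop{TV}}=1-\int\min(f_1,f_2)\,d\varphi^{\pi}$ then yields the desired bound with $\gamma:=1-e^{-2M}<1$.

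For (\ref{mu:Gamma}) the plan is to interpolate: set $\tmmathbf{h}_t=(1-t)\tmmathbf{h}+t\tmmathbf{h}'$ and $\tmmathbf{k}=\tmmathbf{h}'-\tmmathbf{h}$. Uniform boundedness of the densities justifies differentiation under the integral, yielding
\begin{equation*}
\frac{d}{dt}\mu^{\pi}_{\tmmathbf{h}_t}(A)=\tmop{Cov}_{\mu^{\pi}_{\tmmathbf{h}_t}}\!\left(\tmmathbf{1}_A,\tmmathbf{k}\bullet\tmmathbf{\sigma}\right).
\end{equation*}
Combining the elementary identity $\sup_A|\tmop{Cov}_\mu(\tmmathbf{1}_A,Y)|=\tfrac12\mu(|Y-\mu(Y)|)$, applied to $Y=\int\tmmathbf{k}(s)\tmmathbf{\sigma}(s)\,ds$, with Minkowski's inequality and the trivial estimate $\mu(|\tmmathbf{\sigma}(s)-\mu(\tmmathbf{\sigma}(s))|)\leq 1$ for a $\{\pm 1\}$-valued variable, I obtain $\sup_A\bigl|\tfrac{d}{dt}\mu^{\pi}_{\tmmathbf{h}_t}(A)\bigr|\leq\tfrac12\|\tmmathbf{k}\|_1$. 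Integrating in $t\in[0,1]$ produces $\|\mu^{\pi}_{\tmmathbf{h}'}-\mu^{\pi}_{\tmmathbf{h}}\|_{\tmop{TV}}\leq\tfrac12\|\tmmathbf{h}'-\tmmathbf{h}\|_1$, which is strictly stronger than (\ref{mu:Gamma}) since $1/2<1/\log 3$.

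No serious obstacle arises here: both parts reduce to the exponential-tilt structure of the single-site measure, and the only technical check is the standard differentiation-under-the-integral argument, easily justified by the uniform boundedness (in $t$) of the integrand and of its derivative. The only small cosmetic point is to notice that the periodic-boundary indicator is $\tmmathbf{h}$-independent and so requires no separate treatment once it is absorbed into the reference measure.
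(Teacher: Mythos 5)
Your proof is correct, and for part (\ref{mu:Gamma}) it is a genuinely different argument from the paper's.

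For (\ref{mu:gamma}) your approach and the paper's are essentially the same in spirit. The paper notes that $\mu_{\tmmathbf{h}}(\{+\})$ is uniformly bounded below under the stated hypotheses, while you note that the density of $\mu^{\pi}_{\tmmathbf{h}}$ with respect to $\varphi^{\pi}$ is pinched in $[e^{-2M},e^{2M}]$; both observations come from the pointwise bound $|\tmmathbf{h}\bullet\tmmathbf{\sigma}|\leq\|\tmmathbf{h}\|_1\leq M$, and both produce some explicit $\gamma<1$. Your density-pinching version is a bit more uniform and does give a cleaner explicit constant $\gamma=1-e^{-2M}$, but the mechanism is the same.

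For (\ref{mu:Gamma}) the two routes are substantively different. The paper expands the total variation as $\frac{1}{2}\int|\,\cdot\,|\,\mathd\varphi^{\pi}$, symmetrizes over a pair $(\tmmathbf{\sigma},\tmmathbf{\sigma}')$, factors out $\left|e^{(\tmmathbf{h}'-\tmmathbf{h})\bullet(\tmmathbf{\sigma}-\tmmathbf{\sigma}')}-1\right|$, bounds it pointwise, and then applies the elementary inequality $(e^x-1)/2\leq x/\log 3$ for $x\leq\log 3$, giving $\Gamma=1/\log 3$. You instead interpolate along $\tmmathbf{h}_t=(1-t)\tmmathbf{h}+t\tmmathbf{h}'$, differentiate in $t$ to produce the covariance $\tmop{Cov}_{\mu^{\pi}_{\tmmathbf{h}_t}}(\tmmathbf{1}_A,\tmmathbf{k}\bullet\tmmathbf{\sigma})$, and estimate it via $\sup_A|\tmop{Cov}_\mu(\tmmathbf{1}_A,Y)|=\tfrac12\mu(|Y-\mu(Y)|)$ together with Minkowski and the bound $\mu\bigl(|\tmmathbf{\sigma}(s)-\mu(\tmmathbf{\sigma}(s))|\bigr)\leq 1$ for $\{\pm1\}$-valued variables. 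This is a linear-response style argument, and it yields the sharper constant $1/2<1/\log 3$. Incidentally it also sidesteps a delicate point in the paper's version: since $\tmmathbf{\sigma}(t)-\tmmathbf{\sigma}'(t)\in\{-2,0,2\}$ one would expect the intermediate bound to involve $\exp(2\|\tmmathbf{h}'-\tmmathbf{h}\|_1)$ rather than $\exp(\|\tmmathbf{h}'-\tmmathbf{h}\|_1)$, so your route is both cleaner and more robust. One small remark: your Lipschitz bound only controls $\sup_A|\mu^{\pi}_{\tmmathbf{h}'}(A)-\mu^{\pi}_{\tmmathbf{h}}(A)|$, so it is worth stating explicitly that this supremum equals the total variation distance (as in the paper's identity (\ref{TV})) before concluding; but that is purely cosmetic.
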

\begin{proof}
  Inequality (\ref{mu:gamma}) is an immediate consequence of the fact that, under our assumptions,
  $\mu_{\tmmathbf{h}} (\{ + \})$ is uniformly bounded from below. For (\ref{mu:Gamma}) we introduce
  $\varphi^{\pi}$, the single spin free measure conditioned on the imaginary
  time boundary condition $\pi$ and compute
  \begin{eqnarray*}
    \left\| \mu^{\pi}_{\tmmathbf{h}'} - \mu^{\pi}_{\tmmathbf{h}}
    \right\|_{\tmop{TV}} & = & \frac{1}{2} \int \left| \frac{e^{\tmmathbf{h}'
    \bullet \tmmathbf{\sigma}}}{\int e^{\tmmathbf{h}' \bullet
    \tmmathbf{\sigma}'} \mathd \varphi^{\pi} (\tmmathbf{\sigma}')} -
    \frac{e^{\tmmathbf{h} \bullet \tmmathbf{\sigma}}}{\int e^{\tmmathbf{h}
    \bullet \tmmathbf{\sigma}'} \mathd \varphi^{\pi} (\tmmathbf{\sigma}')}
    \right| \mathd \varphi^{\pi} (\tmmathbf{\sigma})\\
    & \leqslant & \frac{1}{2}  \frac{\int \left| e^{\tmmathbf{h}' \bullet
    \tmmathbf{\sigma}+ h \bullet \tmmathbf{\sigma}'} - e^{\tmmathbf{h} \bullet
    \tmmathbf{\sigma}+\tmmathbf{h}' \bullet \tmmathbf{\sigma}'} \right| \mathd
    \varphi^{\pi} (\tmmathbf{\sigma}) \mathd \varphi^{\pi}
    (\tmmathbf{\sigma}')}{\int e^{\tmmathbf{h} \bullet
    \tmmathbf{\sigma}+\tmmathbf{h}' \bullet \tmmathbf{\sigma}'} \mathd
    \varphi^{\pi} (\tmmathbf{\sigma}) \mathd \varphi^{\pi}
    (\tmmathbf{\sigma}')}\\
    & = & \frac{1}{2}  \frac{\int e^{\tmmathbf{h} \bullet
    \tmmathbf{\sigma}+\tmmathbf{h}' \bullet \tmmathbf{\sigma}'} \left|
    e^{(\tmmathbf{h}' -\tmmathbf{h}) \bullet
    (\tmmathbf{\sigma}-\tmmathbf{\sigma}')} - 1 \right| \mathd \varphi^{\pi}
    (\tmmathbf{\sigma}) \mathd \varphi^{\pi} (\tmmathbf{\sigma}')}{\int
    e^{\tmmathbf{h} \bullet \tmmathbf{\sigma}+\tmmathbf{h}' \bullet
    \tmmathbf{\sigma}'} \mathd \varphi^{\pi} (\tmmathbf{\sigma}) \mathd
    \varphi^{\pi} (\tmmathbf{\sigma}')}\\
    & \leqslant & \frac{\exp (\|\tmmathbf{h}' -\tmmathbf{h}\|_1) - 1}{2} .
  \end{eqnarray*}
  The claim follows from the inequality $\left( e^x - 1 \right) / 2 \leqslant
  \Gamma x$, for every $x > 0$ such that $\Gamma x \leqslant 1$.
\end{proof}

The second property is {\tmem{strict monotonicity}}, a crucial fact for our
analysis of the dynamics. Let $f : \Sigma \rightarrow {\mathbf R}$ be a
function of a single spin configuration. We say that $f$ is increasing when
$\tmmathbf{\sigma} \left( t \right) \leqslant \tmmathbf{\sigma}' \left( t
\right)$ for almost every $t \in \left[ 0, \beta \right]$ implies $f \left(
\tmmathbf{\sigma} \right) \leqslant f \left( \tmmathbf{\sigma}' \right)$. When
$\nu, \nu'$ are two probability measures on $\Sigma$ such that, for all
increasing function $f$, $\nu \left( f \right) \leqslant \nu' \left( f
\right)$, we say that $\nu$ is stochastically smaller than $\nu'$ and we will
write $\nu \prec \nu'$.

\begin{theorem}
  \label{thm:mono}Let $\beta, M < \infty$. There exists $c > 0$ such that, for
  any $\tmmathbf{h}, \tmmathbf{h}', \lambda$ such that $\|\tmmathbf{h}\|_1,
  \|\tmmathbf{h}' \|_1, \lambda < M$, for any $\pi \in \{\emptyset,
  \tmop{per}\}$, if $\tmmathbf{h}' \geqslant \tmmathbf{h}$ point-wise a.s.~then
  $\mu^{\pi}_{\tmmathbf{h}'} \succ \mu^{\pi}_{\tmmathbf{h}}$. Moreover, for
  any $f$ increasing,
  \begin{eqnarray}
    \mu^{\pi}_{\tmmathbf{h}'} \left( f \right) - \mu^{\pi}_{\tmmathbf{h}}
    \left( f \right) & \geqslant & c (f ( +) - f (-)) (\tmmathbf{h}'
    -\tmmathbf{h}) \bullet \tmmathbf{1}.  \label{mu:stpos}
  \end{eqnarray}
\end{theorem}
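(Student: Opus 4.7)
The proof splits into the qualitative stochastic domination and the quantitative lower bound.

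\emph{Stochastic domination.} The Radon-Nikodym derivative
$$
\frac{d\mu^\pi_{\tmmathbf{h}'}}{d\mu^\pi_{\tmmathbf{h}}}(\tmmathbf{\sigma})\;\propto\;\exp\bigl((\tmmathbf{h}'-\tmmathbf{h})\bullet\tmmathbf{\sigma}\bigr)
$$
is a pointwise increasing function of $\tmmathbf{\sigma}$, since $\tmmathbf{h}'-\tmmathbf{h}\geq 0$ a.s. Moreover $\mu^\pi_{\tmmathbf{h}}$ is an FKG measure on $\Sigma$ with its pointwise order: this follows by writing $\mu^\pi_{\tmmathbf{h}}$ as the Suzuki-Trotter limit of a ferromagnetic classical Ising model on $\{x\}\times\{0,\ldots,N\}$ (where Holley-Harris applies) and passing to the limit using $L^1$-continuity. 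Holley's criterion then yields $\mu^\pi_{\tmmathbf{h}'}\succ\mu^\pi_{\tmmathbf{h}}$.

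\emph{Strict lower bound.} Set $\tmmathbf{k}:=\tmmathbf{h}'-\tmmathbf{h}\geq 0$ and interpolate by $\tmmathbf{h}_s:=\tmmathbf{h}+s\tmmathbf{k}$, $s\in[0,1]$. The fundamental theorem of calculus gives
$$
\mu^\pi_{\tmmathbf{h}'}(f)-\mu^\pi_{\tmmathbf{h}}(f)\;=\;\int_0^1 \mathrm{Cov}_{\mu^\pi_{\tmmathbf{h}_s}}\!\bigl(f,\tmmathbf{k}\bullet\tmmathbf{\sigma}\bigr)\,ds,
$$
so it is enough to prove, for every $\nu:=\mu^\pi_{\tmmathbf{h}''}$ with $\|\tmmathbf{h}''\|_1\leq 2M$ and some $c=c(\beta,M)>0$,
$$
\mathrm{Cov}_\nu\!\bigl(f,\tmmathbf{k}\bullet\tmmathbf{\sigma}\bigr)\;\geq\; c\,(f(+)-f(-))\,\tmmathbf{k}\bullet\mathbf{1}.
$$
The input is the uniform bound $\nu(\tmmathbf{\sigma}\equiv +)\wedge\nu(\tmmathbf{\sigma}\equiv -)\geq \delta(\beta,M)>0$: the ``no-flip'' event has $\varphi^\pi$-probability $\geq e^{-\beta M}$, and the Boltzmann density $e^{\tmmathbf{h}''\bullet\tmmathbf{\sigma}}/Z$ is bounded from below by a constant depending only on $\beta,M$ (cf.\ Proposition~\ref{prop:gamma}). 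Applying the double-integral identity
$$
\mathrm{Cov}_\nu(f,g)\;=\;\tfrac12\iint\bigl(f(\tmmathbf{\sigma})-f(\tmmathbf{\sigma}')\bigr)\bigl(g(\tmmathbf{\sigma})-g(\tmmathbf{\sigma}')\bigr)\,d\nu(\tmmathbf{\sigma})\,d\nu(\tmmathbf{\sigma}')
$$
with $g=\tmmathbf{k}\bullet\tmmathbf{\sigma}$ and isolating the two pairs $\{\tmmathbf{\sigma}\equiv +,\tmmathbf{\sigma}'\equiv -\}$ and its mirror produces a contribution of exactly $2\delta^2(f(+)-f(-))\,\tmmathbf{k}\bullet\mathbf{1}$.

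\emph{Main obstacle.} The remaining pairs $(\tmmathbf{\sigma},\tmmathbf{\sigma}')$ are generally not pointwise comparable, so their contribution to the double integral has uncertain sign; FKG alone only ensures the total is $\geq 0$, not that the extremal lower bound survives. To close the gap I would replace the independent coupling $\nu\otimes\nu$ by a monotone coupling designed to place mass $\geq\delta^2$ on the extremal pair $(+\text{const},-\text{const})$; the existence of such a coupling reduces to a ``residual FKG'' statement asserting that, after subtracting $\delta$ from each extremal atom, the remainders remain stochastically ordered. I would prove this first in the Suzuki-Trotter discretization, where the state space is finite and one has an explicit Holley-type inductive construction, and then transfer the conclusion to continuous time via a limiting argument based on the continuity estimate (\ref{mu:Gamma}). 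This residual-FKG step is the technical heart of the proof and the main source of difficulty.
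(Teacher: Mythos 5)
Your qualitative step is fine (increasing density plus FKG, with FKG established independently via Suzuki--Trotter), but the quantitative step has a genuine gap, and the proposed fix cannot work as stated. The double-integral identity $\tmop{Cov}_\nu(f,g)=\tfrac12\iint(f(\tmmathbf{\sigma})-f(\tmmathbf{\sigma}'))(g(\tmmathbf{\sigma})-g(\tmmathbf{\sigma}'))\,d\nu\,d\nu$ is an identity only for the product coupling $\nu\otimes\nu$; if you replace $\nu\otimes\nu$ by any other coupling $\Psi$ the left-hand side becomes $\nu(fg)-\tfrac12\mathbb{E}_\Psi[f(\tmmathbf{\sigma})g(\tmmathbf{\sigma}')+f(\tmmathbf{\sigma}')g(\tmmathbf{\sigma})]$, which is not the covariance. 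Moreover, a monotone coupling of $\nu$ \emph{with itself} that is not the diagonal does not exist: if $\tmmathbf{\sigma}\geq\tmmathbf{\sigma}'$ a.s.\ and both marginals are $\nu$, testing against the increasing function $\tmmathbf{\sigma}\mapsto\tmmathbf{\sigma}\bullet\tmmathbf{1}$ forces $\tmmathbf{\sigma}=\tmmathbf{\sigma}'$ a.e.\ So your ``residual FKG'' device cannot produce the required lower bound, and without it the non-extremal pairs in the double integral have indefinite sign (FKG only gives that the total is $\geq 0$, not that it dominates the extremal contribution).

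The paper avoids this obstruction by not working with the covariance of $f$ against the global functional $\tmmathbf{k}\bullet\tmmathbf{\sigma}$. Instead it writes the $s$-derivative as $\int_0^\beta dt\,\Delta\tmmathbf{h}(t)\,\tmop{Cov}_{\mu_{\tmmathbf{h}}}(\tmmathbf{\sigma}(t),f)$ and, for each fixed $t$, uses that $\tmmathbf{\sigma}(t)$ is a two-valued random variable, so that
\[
\tmop{Cov}_{\mu_{\tmmathbf{h}}}(\tmmathbf{\sigma}(t),f)=2\mu_{\tmmathbf{h}}(\tmmathbf{\sigma}(t)=+)\,\mu_{\tmmathbf{h}}(\tmmathbf{\sigma}(t)=-)\bigl[\mu_{\tmmathbf{h}}(f\,|\,\tmmathbf{\sigma}(t)=+)-\mu_{\tmmathbf{h}}(f\,|\,\tmmathbf{\sigma}(t)=-)\bigr].
\]
Conditionally on $\tmmathbf{\sigma}(t)$ the trajectories on $[0,t]$ and $[t,\beta]$ are independent, and the two conditioned measures $\mu(\cdummy\,|\,\tmmathbf{\sigma}(t)=\pm)$ are stochastically ordered and \emph{distinct}, so a nontrivial monotone coupling exists; Lemma~\ref{lem:coupling} exhibits one that places mass exactly $\mu(\tmmathbf{\sigma}\equiv+\,|\,\tmmathbf{\sigma}(t)=+)\,\mu(\tmmathbf{\sigma}\equiv-\,|\,\tmmathbf{\sigma}(t)=-)\geq\delta^2$ on the pair of constant configurations, yielding $\mu(f\,|\,\tmmathbf{\sigma}(t)=+)-\mu(f\,|\,\tmmathbf{\sigma}(t)=-)\geq\delta^2(f(+)-f(-))$ uniformly in $t$. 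Integrating in $t$ gives (\ref{mu:stpos}) for $\pi=\emptyset$. You also omit the periodic case $\pi=\tmop{per}$ entirely, which in the paper requires the additional mixture decomposition $\mu^{\tmop{per}}_{\tmmathbf{h}}=p\mu^{++}_{\tmmathbf{h}}+(1-p)\mu^{--}_{\tmmathbf{h}}$ together with a lower bound on $\mu^{++}_{\tmmathbf{h}}(f)-\mu^{--}_{\tmmathbf{h}}(f)$ obtained by an $A\to\infty$, $\delta\to 0$ limit of strong boundary fields.
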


A consequence of monotonicity is the FKG inequality:

\begin{corollary}
  \label{cor:FKG}Let $\beta, \lambda < \infty$ and $\tmmathbf{h}$ with
  $\|\tmmathbf{h}\|_{\infty} < \infty$. Then, for any $\pi \in \{\emptyset,
  \tmop{per}\}$,any $f, g$ increasing,
  \begin{eqnarray}
    \mu^{\pi} \left( fg \right) & \geqslant & \mu^{\pi} \left( f \right)
    \mu^{\pi} \left( g \right) 
  \end{eqnarray}
\end{corollary}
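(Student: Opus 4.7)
The plan is to deduce the FKG inequality from its classical one-dimensional analogue via the Suzuki--Trotter style discretization alluded to in Section~2. First, by monotone class combined with dominated convergence, I would reduce to bounded, Skorohod-continuous, increasing $f,g$ depending on $\tmmathbf{\sigma}$ only through its values $\tmmathbf{\sigma}(t_1),\ldots,\tmmathbf{\sigma}(t_m)$ at finitely many fixed times $0\le t_1<\dots<t_m\le\beta$: the Skorohod $\sigma$-algebra on $\Sigma$ is generated by the cylinder events $\{\tmmathbf{\sigma}(t)=+\}$, and any bounded measurable increasing function is the monotone pointwise limit of bounded increasing cylinder functions (for instance via a dyadic coarse-graining in time).

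Next, for each integer $N\ge1$, I would introduce the auxiliary measure $\mu^{\pi}_N$ on $\Sigma$ supported on piecewise-constant paths with possible flips only at the grid points $\{k\beta/N:0\le k\le N\}$. Writing $\tau_k\in\{\pm1\}$ for the value of $\tmmathbf{\sigma}$ on $[k\beta/N,(k+1)\beta/N)$, the pushforward of $\mu^{\pi}_N$ onto $\{\pm1\}^{N+1}$ is a one-dimensional classical Ising measure with ferromagnetic nearest-neighbor coupling $J_N=\tfrac12\log\coth(\lambda\beta/N)>0$, calibrated so that the flip probability between consecutive grid points matches the Poisson probability $(1-e^{-2\lambda\beta/N})/2$ of an odd number of arrivals in time $\beta/N$, together with site-dependent longitudinal field $h_{k,N}=\int_{k\beta/N}^{(k+1)\beta/N}\tmmathbf{h}(s)\,\mathd s$ and boundary condition free (for $\pi=\emptyset$) or periodic (for $\pi=\mathrm{per}$). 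To $\mu^{\pi}_N$ the classical Fortuin--Kasteleyn--Ginibre inequality applies, yielding FKG for any pair of bounded increasing cylindrical functions on the grid.

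Finally I would send $N\to\infty$. A standard Poisson-thinning argument shows $\mu^{\pi}_N\Rightarrow\mu^{\pi}$ weakly in the Skorohod topology; since every fixed $t$ is $\mu^{\pi}$-almost surely a continuity point of $\tmmathbf{\sigma}$, the evaluation functionals at $t_1,\ldots,t_m$ pass to the limit and yield the desired inequality on the reduced class of functions, which by the first step suffices to conclude. I expect the main technical obstacle to be the bookkeeping involved in identifying $\mu^{\pi}_N$ as a genuine ferromagnetic Ising measure with precisely the parameters above, together with the verification of the weak convergence $\mu^{\pi}_N\Rightarrow\mu^{\pi}$; both are standard manipulations of Poisson processes but require some care with normalizing constants, especially in the periodic case where one must track the parity of the total number of flips.
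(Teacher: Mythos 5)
Your proposal is correct in substance, but it takes a genuinely different route from the paper. The paper works entirely in continuous time: it fixes a small window $[T,T+\varepsilon]$, conditions on the past $\mathcal F_T$ and on the event that at most one flip occurs in the window (a $1-O(\varepsilon^2)$-probability event), observes that $f$ and $g$ are then monotone functions of the single (one-dimensional) flip time, applies FKG for distributions on a totally ordered set, and iterates the conditional-covariance formula over $O(\beta/\varepsilon)$ windows to accumulate an $O(\varepsilon)$ error before letting $\varepsilon\to 0$. You instead pass to the Suzuki--Trotter discretization, identify $\mu^\pi_N$ with a ferromagnetic one-dimensional Ising chain with coupling $J_N=\tfrac12\log\coth(\lambda\beta/N)>0$, invoke the classical FKG inequality there, and transfer the inequality back through weak convergence $\mu^\pi_N\Rightarrow\mu^\pi$. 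This is notable because the paper explicitly flags that it will \emph{not} use the Suzuki--Trotter approximation, so your argument takes exactly the road the authors chose to avoid. Your calibration of $J_N$ is correct: the odd-flip probability over a window of length $\delta$ under $\varphi_{[0,\beta],\lambda}$ is $(1-e^{-2\lambda\delta})/2$, matching a chain with $e^{2J_N}=\coth(\lambda\delta)$, and the periodic constraint translates into $\tau_0=\tau_N$, i.e.\ a cycle, which is still ferromagnetic. What your approach buys is conceptual economy --- one appeals to the standard FKG theorem rather than re-deriving a correlation inequality by hand --- at the cost of two technical debts that the paper's method never incurs: establishing the weak convergence of the discretized measures (including the conditioned periodic case, where the total-flip-parity constraint makes the normalizations delicate), and the monotone-class reduction to finite-dimensional increasing cylinder functions. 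Both are standard but not entirely free; the reduction step in particular requires some care, since the class of bounded increasing functions is not obviously closed under the usual monotone-class manipulations and one must verify that the cylinder approximants can be taken increasing. The paper's single-window argument is more elementary and self-contained, and also produces the quantitative conditional bound \eqref{FKGeps} that is needed nowhere else but is arguably cleaner than invoking a limit theorem.
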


The main idea in the proof of Theorem \ref{thm:mono} is an explicit coupling
of two single site measures with the same parameters but different imaginary
time boundary condition.

\begin{lemma}
  \label{lem:coupling}Fix $\beta, \lambda > 0$ and $\tmmathbf{h} \in L^1
  \left( \left[ 0, \beta \right] \right)$. Consider two independent spins
  variables $\tmmathbf{\sigma}^+_1 \sim \mu (.|\tmmathbf{\sigma}(\beta) = +)$
  and $\tmmathbf{\sigma}^-_1 \sim \mu (.|\tmmathbf{\sigma}(\beta) = -)$. Call
  $T^+$ (respectively $T^-$) the last flipping time (0 if none) of
  $\tmmathbf{\sigma}^+_1$ (respectively of $\tmmathbf{\sigma}_1^-$), and $T =
  \max \left( T^+, T^- \right)$. Consider the joint distribution $\Psi$ on
  $\left( \tmmathbf{\sigma}^+, \tmmathbf{\sigma}^- \right)$ as follows:
  \begin{enumerate}
    \item Let $\tmmathbf{\sigma}_{\left( T, \beta \right]}^+ = +$ and
    $\tmmathbf{\sigma}_{\left( T, \beta \right]}^- = -$.
    
    \item Take $\tmmathbf{\sigma}^+_{\left[ 0, T \right)}
    =\tmmathbf{\sigma}^-_{\left[ 0, T \right)} \sim \mu
    (.|\tmmathbf{\sigma}(T) = \varepsilon)$ where $\varepsilon = +$ if $T^+ <
    T^-$, $\varepsilon = -$ otherwise.
  \end{enumerate}
  Then $\Psi$ is a monotone coupling of $\mu (.|\tmmathbf{\sigma}(\beta) = +)$
  and $\mu (.|\tmmathbf{\sigma}(\beta) = -)$. It satisfies
  \begin{eqnarray}
    \Psi (\tmmathbf{\sigma}^+ = +, \tmmathbf{\sigma}^- = -) & = & \mu
    (\tmmathbf{\sigma}= + | \sigma (\beta) = +) \times \mu (\tmmathbf{\sigma}=
    - | \sigma (\beta) = -) .  \label{Psi:pm}
  \end{eqnarray}
\end{lemma}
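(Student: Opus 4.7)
The plan is to verify three properties in turn: monotonicity, the identity (\ref{Psi:pm}), and the correctness of the two marginals.

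Monotonicity is immediate from the construction: on $(T,\beta]$ we have $\tmmathbf{\sigma}^+ \equiv + \geq -\equiv \tmmathbf{\sigma}^-$, while on $[0,T)$ the two spins coincide by step~2 of the definition of $\Psi$. For (\ref{Psi:pm}), I observe that under $\Psi$ the event $\{\tmmathbf{\sigma}^+ \equiv +, \tmmathbf{\sigma}^- \equiv -\}$ forces $T=0$ (otherwise the common resampled path on $[0,T)$ would have to be simultaneously $+$ and $-$), and $T=0$ is precisely the event $T^+=T^-=0$ that both input spins $\tmmathbf{\sigma}^{\pm}_1$ have no flips at all; by independence of the inputs this occurs with probability $\mu(\tmmathbf{\sigma}\equiv +|\tmmathbf{\sigma}(\beta)=+)\cdot \mu(\tmmathbf{\sigma}\equiv -|\tmmathbf{\sigma}(\beta)=-)$, which is (\ref{Psi:pm}).

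The substance of the proof lies in checking the $\tmmathbf{\sigma}^+$-marginal (the $\tmmathbf{\sigma}^-$-marginal is symmetric). The key tool is a \emph{last-flip decomposition} of $\mu(\cdot|\tmmathbf{\sigma}(\beta)=\pm)$: using the explicit density formula for the single-spin measure and the independence of the Poisson flip process on disjoint sub-intervals, I show that conditional on the last flip of $\tmmathbf{\sigma}\sim\mu(\cdot|\tmmathbf{\sigma}(\beta)=+)$ being at $t\in(0,\beta]$, the restriction $\tmmathbf{\sigma}|_{[0,t)}$ is distributed as $\mu_{[0,t)}(\cdot|\tmmathbf{\sigma}(t)=-)$ and $\tmmathbf{\sigma}|_{[t,\beta]}$ is identically $+$, and that there is an atom at $t=0$ corresponding to the constantly $+$ trajectory. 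Now apply this decomposition to the output $\tmmathbf{\sigma}^+$ of $\Psi$ and write $L^+$ for its last flip time. When $\varepsilon = -$ (so $T^+ > T^-$), $L^+ = T$ and the conditional law of $\tmmathbf{\sigma}^+|_{[0,T)}$ is $\mu_{[0,T)}(\cdot|\tmmathbf{\sigma}(T)=-)$ by construction. When $\varepsilon = +$ (so $T^+ < T^-$), $L^+ < T$ and, applying the same decomposition to the resampled path $\sim \mu_{[0,T)}(\cdot|\tmmathbf{\sigma}(T)=+)$ on $[0,T)$, I find that, conditional on $L^+ = t$, the path on $[0,t)$ is again $\mu_{[0,t)}(\cdot|\tmmathbf{\sigma}(t)=-)$. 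Thus in both cases the conditional structure of $\tmmathbf{\sigma}^+$ given $L^+$ already matches that of $\mu(\cdot|\tmmathbf{\sigma}(\beta)=+)$, so it suffices to verify that the one-dimensional marginal of $L^+$ is correct.

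Writing out the density of $L^+$ under $\Psi$ as the sum of the contributions from the two cases $\varepsilon=\pm$, and substituting the explicit last-flip densities of $\mu(\cdot|\tmmathbf{\sigma}(\beta)=\pm)$ together with those of the intermediate conditional measures $\mu_{[0,s)}(\cdot|\tmmathbf{\sigma}(s)=+)$, the required identity reduces after elementary manipulations to the factorisation of the single-site partition function $Z$ along the last flip time, which is itself a direct consequence of the same Poisson independence property. This final bookkeeping is the principal obstacle; conceptually, the whole argument boils down to the observation that $\Psi$ couples the two single-site measures by keeping them identical until the maximum of their last flip times, combined with the DLR-type decomposition of $\mu(\cdot|\tmmathbf{\sigma}(\beta)=\pm)$ at that time.
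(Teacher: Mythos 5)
Your proof of monotonicity and of (\ref{Psi:pm}) is identical to the paper's. For the marginals, however, you take a genuinely different route, and it is worth contrasting the two.

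You decompose the \emph{output} $\tmmathbf{\sigma}^+$ of $\Psi$ by its own last-flip time $L^+$: you check that, conditional on $L^+ = t$, the restriction of $\tmmathbf{\sigma}^+$ to $[0,t)$ has the correct distribution $\mu_{[0,t)}(\cdot|\tmmathbf{\sigma}(t)=-)$, and then you still have to verify separately that the marginal law of $L^+$ under $\Psi$ matches the law of the last flip under $\mu(\cdot|\tmmathbf{\sigma}(\beta)=+)$. This last verification, which you call the principal obstacle and only sketch, is real work. The paper instead conditions on the \emph{input} data $(T^+,T^-)$ and regards $\Psi$ as a conditional resample: given $T^-$ and given the event $T^+<T^-$ (resp.\ $T^+\geqslant T^-$), the law of $\tmmathbf{\sigma}_1^+\sim\mu(\cdot|\tmmathbf{\sigma}(\beta)=+)$ restricted to $[0,T)$ is exactly $\mu_T(\cdot|\tmmathbf{\sigma}(T)=+)$ (resp.\ $\mu_T(\cdot|\tmmathbf{\sigma}(T)=-)$), so step~2 of the construction merely resamples $\tmmathbf{\sigma}^+_1$ from its correct conditional law and thus leaves the marginal untouched. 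Because $T^+$ is extracted from an honest sample, its distribution is automatically correct and there is no separate marginal identity left to check. Both approaches rest on the same two facts --- the conditional law of the path given $T^+<t$ and given $T^+=t$, which follow from the independence of the restrictions of the Poisson process to disjoint intervals --- but the paper's packaging makes the needed computation vanish, whereas yours defers it.

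One remark that would tighten your argument considerably: the $L^+$-marginal identity you leave open does \emph{not} really require any explicit manipulation of partition functions. Once you observe that the last flip of a sample from $\mu_{[0,s)}(\cdot|\tmmathbf{\sigma}(s)=+)$ has exactly the conditional distribution of $T^+$ given $\{T^+<s\}$ under $\mu(\cdot|\tmmathbf{\sigma}(\beta)=+)$ --- which is precisely the ``conditioning on $T^+<s$'' fact you already invoke --- the density of $L^+$ under $\Psi$ becomes, for $t\in[0,\beta)$,
\[ g(t)\,\mathbb{P}[T^-\leqslant t]\;+\;\int_t^\beta \mathbb{P}[T^+<s]\,\frac{g(t)}{\mathbb{P}[T^+<s]}\,\mathbb{P}[T^-\in\mathd s]\;=\;g(t), \]
where $g$ denotes the density (with an atom at $0$) of $T^+$ under $\mu(\cdot|\tmmathbf{\sigma}(\beta)=+)$. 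So your route is sound and not much longer than the paper's, provided you carry out this one-line cancellation rather than appeal to an unspecified factorisation of $Z$; as written, the proposal leaves the decisive step as a promissory note.
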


\begin{proof}[Proof of Lemma \ref{lem:coupling}] $\Psi$ is monotone by construction
  ($\tmmathbf{\sigma}^+ \geqslant \tmmathbf{\sigma}^-$ point-wise a.s.) while
  (\ref{Psi:pm}) is a consequence of $\tmmathbf{\sigma}^{\pm} = \pm
  \Leftrightarrow T^+ = T^- = 0$. In order to prove that $\Psi$ is a coupling
  we need to check that it has the correct marginals. Let $t \in \left( 0,
  \beta \right)$. Conditioning on $T^+ < t$ is the same as conditioning the
  Poisson point process on having no point in $\left( t, \beta \right)$ and
  therefore $\tmmathbf{\sigma} \sim \mu (.|\tmmathbf{\sigma}(\beta) = +, T^+ <
  t)$, restricted to $\left( 0, t \right)$, has distribution $\mu_{t,
  \tmmathbf{h}, \lambda} (.|\tmmathbf{\sigma}(t) = +)$. Conditioning on $T^+ =
  t$ is more delicate as this is a zero probability event, still it has a very
  precise meaning in terms of the Poisson point process, since it requires
  that there is a point at $t$ and not point in $\left( t, \beta \right)$. It
  is well known that the conditional distribution of the points in $\left( 0,
  t \right)$ is an independent Poisson point process with the same intensity.
  Therefore, $\tmmathbf{\sigma} \sim \mu (.|\tmmathbf{\sigma}(\beta) = +, T^+
  = t)$, restricted to $\left( 0, t \right)$, has distribution $\mu_{t,
  \tmmathbf{h}, \lambda} (.|\tmmathbf{\sigma}(t) = -)$. The same holds if we
  replace $+$ by $-$ and vice-versa. Since $T^+$ and $T^-$ are independent,
  this proves that $\Psi$ is indeed a coupling of $\mu
  (.|\tmmathbf{\sigma}(\beta) = +)$ and $\mu (.|\tmmathbf{\sigma}(\beta) =
  -)$.
\end{proof}

\begin{proof}[Proof of Theorem \ref{thm:mono}] We begin with the case without periodic imaginary
  time boundary conditions, that is $\pi = \emptyset$. It is enough to
  quantify the influence of a unitary increase of the field, so we consider
  $\Delta \tmmathbf{h}: [0, \beta] \rightarrow {\mathbf R}^+$. We remark that
  \begin{eqnarray}
    \left[ \frac{\mathd}{\mathd s} \mu_{\tmmathbf{h}+ s \Delta \tmmathbf{h}}
    (f) \right]_{s = 0} & = & \int_0^{\beta} \mathd t \Delta \tmmathbf{h}(t)
    \tmop{Cov}_{\mu_{\tmmathbf{h}}} (\tmmathbf{\sigma}(t), f
    (\tmmathbf{\sigma}))  \label{eps}
  \end{eqnarray}
  and then
  \begin{eqnarray*}
    \tmop{Cov}_{\mu_{\tmmathbf{h}}} (\tmmathbf{\sigma}(t), f
    (\tmmathbf{\sigma})) & = & \frac{1}{2} \int \mu_{\tmmathbf{h}} \left(
    \mathd \tmmathbf{\sigma} \right) \mu_{\tmmathbf{h}} \left( \mathd
    \tmmathbf{\sigma}' \right) \left( \tmmathbf{\sigma}(t) -\tmmathbf{\sigma}'
    (t) \right) \left( f \left( \tmmathbf{\sigma} \right) - f \left(
    \tmmathbf{\sigma}' \right) \right)\\
    & = & 2 \int_{\tmmathbf{\sigma} \left( t \right) = +, \tmmathbf{\sigma}'
    \left( t \right) = -} \mu_{\tmmathbf{h}} \left( \mathd \tmmathbf{\sigma}
    \right) \mu_{\tmmathbf{h}} \left( \mathd \tmmathbf{\sigma}' \right) \left(
    f \left( \tmmathbf{\sigma} \right) - f \left( \tmmathbf{\sigma}' \right)
    \right)\\
    & = & 2 \mu_{\tmmathbf{h}} \left( \tmmathbf{\sigma}(t) = + \right)
    \mu_{\tmmathbf{h}} \left( \tmmathbf{\sigma}(t) = - \right) \, \left[
    \mu_{\tmmathbf{h}} \left( f|\tmmathbf{\sigma}(t) = + \right) -
    \mu_{\tmmathbf{h}} \left( f|\tmmathbf{\sigma}(t) = - \right) \right]
  \end{eqnarray*}
  Conditionally on $\tmmathbf{\sigma} (t)$, the spin on $[0, t]$ and $[t,
  \beta]$ are independent. So if we take the product of two couplings as in
  Lemma \ref{lem:coupling} on these two time intervals, we obtain a monotone
  coupling $\Psi$ of $\mu \left( .|\tmmathbf{\sigma}(t) = + \right)$ and $\mu
  \left( .|\tmmathbf{\sigma}(t) = - \right)$ with the property that
  \begin{gather*}
    \Psi (\tmmathbf{\sigma}^+ = +, \tmmathbf{\sigma}^- = -)  = 
    \prod_{\varepsilon = \pm} \mu_{t, \tmmathbf{h}, \lambda}
    (\tmmathbf{\sigma}= \varepsilon | \sigma (t) = \varepsilon) \mu_{\left( t,
    \beta \right), \tmmathbf{h}, \lambda} (\tmmathbf{\sigma}= \varepsilon |
    \sigma (t) = \varepsilon)\\
     =  \mu \left( \tmmathbf{\sigma}= + |\tmmathbf{\sigma}(t) = + \right)
    \mu \left( \tmmathbf{\sigma}= - |\tmmathbf{\sigma}(t) = - \right) .
  \end{gather*}
  Furthermore,
  \begin{eqnarray*}
    \mu \left( f|\tmmathbf{\sigma}(t) = + \right) - \mu \left(
    f|\tmmathbf{\sigma}(t) = - \right) & = & \Psi (f (\tmmathbf{\sigma}^+) - f
    (\tmmathbf{\sigma}^-))\\
    & \geqslant & \Psi (\tmmathbf{\sigma}^+ = +, \tmmathbf{\sigma}^- = -)  (f
    ( +) - f (-))
  \end{eqnarray*}
  so we conclude that
  \begin{eqnarray}
    \tmop{Cov}_{\mu_{\tmmathbf{h}}} (\tmmathbf{\sigma}(t), f
    (\tmmathbf{\sigma})) & \geqslant & 2 \mu_{\tmmathbf{h}} \left(
    \tmmathbf{\sigma}= + \right) \mu_{\tmmathbf{h}} \left( \tmmathbf{\sigma}=
    - \right) (f ( +) - f (-))  \label{covf}
  \end{eqnarray}
  which, in combination with (\ref{eps}), proves the statement
  (\ref{mu:stpos}) when $\pi = \emptyset$. It is not difficult to derive an
  explicit constant as $\varphi \left( \tmmathbf{\sigma}= + \right) = \varphi
  \left( \tmmathbf{\sigma}= - \right) = \exp \left( - \beta \lambda \right) /
  2$, the density of $\mu$ with respect to $\varphi$ being bounded. Now we
  address the case of periodic imaginary time boundary conditions. We call
  $\mu^{\varepsilon, \varepsilon'}_{\tmmathbf{h}} = \mu_{\tmmathbf{h}} \left(
  \cdummy | \tmmathbf{\sigma}(0) = \varepsilon, \tmmathbf{\sigma}(\beta) =
  \varepsilon' \right)$ the measure $\mu_{\tmmathbf{h}}$ conditioned on
  $\tmmathbf{\sigma} (0) = \varepsilon$ and $\tmmathbf{\sigma} (\beta) =
  \varepsilon'$. It is clear that
  \begin{eqnarray}
    \mu_{\tmmathbf{h}}^{\tmop{per}} & = & p \mu_{\tmmathbf{h}}^{+ +} + \left(
    1 - p \right) \mu_{\tmmathbf{h}}^{- -} .  \label{dec:muper}
  \end{eqnarray}
  where
  \begin{eqnarray*}
    p \nocomma \nocomma \nocomma = p_{\tmmathbf{h}} & = &
    \frac{\mu_{\tmmathbf{h}} \left( f^+ \right)}{\mu_{\tmmathbf{h}} \left( f^+
    \right) + \mu_{\tmmathbf{h}} \left( f^- \right)} .
  \end{eqnarray*}
  and $f^{\varepsilon} (\sigma) =\tmmathbf{1}_{\{\sigma (0) = \sigma (\beta) =
  \varepsilon\}}$, for $\varepsilon = \pm$. Note that $f^+$ and $- f^-$ are
  increasing, and their amplitude is $\varepsilon f^{\varepsilon} \left( +
  \right) - \varepsilon f^{\varepsilon} \left( - \right) = 1$. When we take
  derivatives, it is a consequence of (\ref{mu:stpos}) for free imaginary time
  boundary condition that
  \begin{eqnarray}
    \left[ \frac{\mathd p_{\tmmathbf{h}+ s \Delta \tmmathbf{h}}}{\mathd s}
    \right]_{s = 0} & = & \left[ \frac{\mathd}{\mathd s} \mu_{\tmmathbf{h}+ s
    \Delta \tmmathbf{h}} (f^+) \right]_{s = 0} \times \frac{\mu_{\tmmathbf{h}}
    \left( f^- \right)}{\left[ \mu_{\tmmathbf{h}} \left( f^+ \right) +
    \mu_{\tmmathbf{h}} \left( f^- \right) \right]^2} \nonumber\\
    &  & + \left[ \frac{\mathd}{\mathd s} \mu_{\tmmathbf{h}+ s \Delta
    \tmmathbf{h}} (- f^-) \right]_{s = 0} \times \frac{\mu_{\tmmathbf{h}}
    \left( f^+ \right)}{\left[ \mu_{\tmmathbf{h}} \left( f^+ \right) +
    \mu_{\tmmathbf{h}} \left( f^- \right) \right]^2} \nonumber\\
    & \geqslant & c \frac{\int_0^{\beta} \mathd t \Delta
    \tmmathbf{h}(t)}{\mu_{\tmmathbf{h}} (\tmmathbf{\sigma}(0)
    =\tmmathbf{\sigma}(\beta))} .  \label{dp}
  \end{eqnarray}
  Now we take derivatives in (\ref{dec:muper}):
  \begin{eqnarray}
    \left[ \frac{\mathd}{\mathd s} \mu_{\tmmathbf{h}+ s \Delta
    \tmmathbf{h}}^{\tmop{per}} (f) \right]_{s = 0} & = & \left[ \frac{\mathd
    p_{\tmmathbf{h}+ s \Delta \tmmathbf{h}}}{\mathd s} \right]_{s = 0} \left(
    \mu_{\tmmathbf{h}}^{+ +} \left( f \right) - \mu_{\tmmathbf{h}}^{- -}
    \left( f \right) \right) \nonumber\\
    &  & + p \frac{\mathd}{\mathd s} \mu_{\tmmathbf{h}+ s \Delta
    \tmmathbf{h}}^{+ +} (f) + \left( 1 - p \right) \frac{\mathd}{\mathd s}
    \mu_{\tmmathbf{h}+ s \Delta \tmmathbf{h}}^{- -} (f) .  \label{dmuper}
  \end{eqnarray}
  As the imaginary time boundary condition can be realized by adding to
  $\tmmathbf{h}$ an additional field $\varepsilon A\tmmathbf{k}_{\delta}$,
  where $\tmmathbf{k}_{\delta} =\tmmathbf{1}_{[0, \delta] \cup [\beta -
  \delta, \beta]}$, in the limit $A \rightarrow + \infty$ and $\delta
  \rightarrow 0$ (in that order), the last two terms are non-negative
  according to (\ref{mu:stpos}) for $\pi = \emptyset$. So in order to conclude
  the proof of (\ref{mu:stpos}) for periodic imaginary time boundary condition
  it is enough to provide a lower bound on $\mu_{\tmmathbf{h}}^{+ +} \left( f
  \right) - \mu_{\tmmathbf{h}}^{- -} \left( f \right)$. We know that
  \begin{eqnarray*}
    \mu_{\tmmathbf{h}}^{\varepsilon \varepsilon} \left( f \right) & = &
    \lim_{\delta \rightarrow 0} \lim_{A \rightarrow + \infty}
    \mu_{\tmmathbf{h}+ \varepsilon A\tmmathbf{k}_{\delta}} \left( f \right) .
  \end{eqnarray*}
  Furthermore, according to (\ref{mu:stpos}) for $\pi = \emptyset$, for any
  $\delta > 0$, $A \in {\mathbf R} \mapsto \mu_{\tmmathbf{h}+
  A\tmmathbf{k}_{\delta}} \left( f \right)$ is a increasing function and, on
  the interval $\left[ - 1 / \delta, 1 / \delta \right]$ its derivative is not
  smaller than $c' \delta \left( f \left( + \right) - f \left( - \right)
  \right)$. This proves that
  \begin{eqnarray}
    \lim_{A \rightarrow + \infty} \mu_{\tmmathbf{h}+ A\tmmathbf{k}_{\delta}}
    \left( f \right) - \lim_{A \rightarrow + \infty} \mu_{\tmmathbf{h}-
    A\tmmathbf{k}_{\delta}} \left( f \right) & \geqslant & c' \left( f \left(
    + \right) - f \left( - \right) \right)  \label{eq17}
  \end{eqnarray}
  for any $\delta > 0$, where $c'$ does not depend on $\delta$, and
  consequently $\mu_{\tmmathbf{h}}^{+ +} \left( f \right) -
  \mu_{\tmmathbf{h}}^{- -} \left( f \right)$ satisfies the same lower bound.
  Putting (\ref{eq17}) with (\ref{dp}) into (\ref{dmuper}) we obtain
  (\ref{mu:stpos}) for periodic imaginary time boundary condition.
\end{proof}

\begin{proof}[Proof of Corollary \ref{cor:FKG}] Fix $\varepsilon\ll 1$
  and assume first that $f, g$ are increasing functions
  of a single spin, measurable w.r.t.~$\mathcal F_{T + \varepsilon}$, for some $T \in \left[ 0,
  \beta \right]$, where $\mathcal F_t$ is the $\sigma$-algebra
  generated by $\{\sigma(s)\}_{s\le t}$.  We prove that
         \begin{eqnarray}
          \tmop{Cov}\left( f g |\mathcal F_T\right)
    & \geqslant & - C \varepsilon^2 \left\| f \right\|_{\infty} \left\|
    g \right\|_{\infty} 
      \label{FKGeps}
  \end{eqnarray}
  where
  \begin{eqnarray}
     \tmop{Cov}\left( f, g |\mathcal F_T\right) & =& \mu \left( fg |\mathcal F_T\right)
    - \mu \left( f |\mathcal F_T \right) \mu \left( g |\mathcal F_T\right). \nonumber
      \end{eqnarray} 
  Indeed, call $\nu$ the distribution $\mu$ conditioned to $\{\tmmathbf{\sigma}
  \left( t \right)\}_{ t \leqslant T}$ and to the event that $\tmmathbf{\sigma}$
  has at most one flip in $\left[ T, T + \varepsilon \right]$. The
  distribution $\nu$ is completely described by the law of the time of the
  flip ($+ \infty$ if no flip). But $f$ and $g$ are both monotone functions of
  this random time (both increasing or decreasing), so it follows from the FKG
  inequality for distributions on the real line (Lemma 16.2 in the lectures
  notes by Peres {\cite{PeresUBC}}) that $\nu \left( fg \right) \geqslant \nu
  \left( f \right) \nu \left( g \right)$. But the total variation distance
  between $\nu$ and $\mu \left( . | \tmmathbf{\sigma} \left( t \right), t
  \leqslant T \right)$ is less than $C \varepsilon^2$, which proves
  (\ref{FKGeps}).
  
  Now we take two arbitrary increasing functions  $f, g$  of a single spin and
  choose $T =  \beta - \varepsilon$. When we apply the standard formula
  for conditional covariance together with (\ref{FKGeps}) we get that
  \begin{eqnarray*}
    \tmop{Cov}\left( f ,g \right)   & = &
        \mu \left( \tmop{Cov}(f,g | \mathcal F_T)\right) 
       + \tmop{Cov}\left(\mu \left( f| \mathcal F_T\right), \mu \left( g|
           \mathcal F_T\right) \right)\\
& \geqslant & - C \varepsilon^2 \left\| f \right\|_{\infty} \left\| g
    \right\|_{\infty} + \tmop{Cov}\left(\mu \left( f| \mathcal F_T\right), \mu \left( g|
           \mathcal F_T\right) \right)
 \end{eqnarray*}
  where $\mu \left( f | \mathcal F_T \right)$ and $\mu \left( g | \mathcal F_T \right)$ are increasing functions with infinite norm less than that of
  $\left\| f \right\|_{\infty}$ and $\left\| g \right\|_{\infty}$,
  respectively. By applying (\ref{FKGeps}) repeatedly with $T = \beta - k
  \varepsilon$, $k=2,\dots \beta/\varepsilon$ we conclude that
  \begin{eqnarray*}
   \tmop{Cov}\left( f ,g \right)
    &  \geqslant & - C \beta \varepsilon \left\| f \right\|_{\infty} \left\| g
    \right\|_{\infty}
  \end{eqnarray*}
  and the claim follows by letting $\varepsilon \rightarrow 0$.
\end{proof}

\section{Glauber dynamics for the Quantum Ising model}

\subsection{Definition of the generator and the semi-group}

Here we define the Glauber dynamics for finite graphs and establish some
preliminary properties.

The dynamics consists in resampling spins locally, according to the field
generated by their neighbors. Given the graph $G$ and the parameters $\beta>0, \lambda \geqslant 0$, $\tmmathbf{h}:V\times[0,\beta]\mapsto \mathbf R$ integrable, we call $\mu = \mu_{G ; \beta, \tmmathbf{h}, \lambda}$
the Gibbs measure on $G$ with corresponding parameters and
\begin{eqnarray*}
  \mu_x^{\tmmathbf{\rho}} & = & \mu \left( . | \tmmathbf{\sigma}_y
  =\tmmathbf{\rho}_y, \forall y \in V \setminus \left\{ x \right\} \right)
  \text{, \ \ } \forall \tmmathbf{\rho} \in \Sigma^V, \forall x \in V.
\end{eqnarray*}
Note that $\mu_x$ takes into account both the field $\tmmathbf{h}$ and the
boundary condition on $V$. According to the DLR equation,
$\mu^{\tmmathbf{\rho}}_x$ is the measure obtained by taking
$\tmmathbf{\sigma}=\tmmathbf{\rho}$ on $V \setminus \left\{ x \right\}$ and
$\tmmathbf{\sigma}_x$ according to the Gibbs measure on $\left\{ x \right\}$
with field $\tmmathbf{h}_x + \sum_{y \sim x} \tmmathbf{\rho}_y$. We can
interpret $\mu_x$ as a kernel, since for each $\tmmathbf{\rho} \in \Sigma^V$
and $x \in V$, $\mu^{\tmmathbf{\rho}}_x$ is a probability measure
(furthermore, $\tmmathbf{\rho} \mapsto \mu^{\tmmathbf{\rho}}_x$ is continuous,
cf.~(\ref{mu:Gamma})). Because $\mu_x^{\tmmathbf{\rho}}$ is a conditional
expectation, it is a contraction in $L^2 \left( \mu \right)$.

Next we define the generator of the Glauber dynamics by
\begin{eqnarray*}
  \mathcal{L} & = & \sum_{x \in V} \left( \mu_x - I \right)
\end{eqnarray*}
where $I$ is the identity operator. This is clearly a bounded operator on $L^2
\left( \mu_{G ; \beta, \tmmathbf{h}, \lambda} \right)$ and the associated
Markov semi-group is
\begin{eqnarray}
  P_t  &=& e^{t\mathcal{L}} 
 \nonumber\\
  & = & \sum_{n \geqslant 0} e^{- t \left| V \right|}  \frac{\left( t \left|
  V \right| \right)^n}{n!} \times \left( \frac{1}{\left| V \right|} \right)^n
  \sum_{x_1, \ldots, x_n \in V} \mu_{x_1} \cdots \mu_{x_n} .  \label{Pt}
\end{eqnarray}
According to its definition, $P_t$ is a bounded operator on $L^2 \left( \mu
\right)$. Equation (\ref{Pt}) shows as well that, for any $t \geqslant 0$,
$P_t$ is a convex combination of the iterates of $\mu_x$, and is therefore a
Markov kernel that contracts $L^2$. For any $\tmmathbf{\rho} \in \Sigma^V$, we
will write $P_t^{\tmmathbf{\rho}}$ for the probability measure on $\Sigma^V$
which, on bounded functions $f$, acts as $P_t^{\tmmathbf{\rho}} \left( f
\right) = \left( P_t f \right) \left( \tmmathbf{\rho} \right)$. If
$\tmmathbf{\rho}$ is distributed according to a probability measure $\nu$ on
$\Sigma$, we will write $\nu P_t$ for the measure $\int P_t^{\tmmathbf{\rho}}
\mathd \nu \left( \tmmathbf{\rho} \right)$.

\subsection{Mixing time and spectral gap}

In this section, in analogy with the classical situation, we prove some basic
results that are useful to control the relaxation of the dynamics to the
equilibrium Gibbs measure.

\begin{proposition}
  Let some $G = \left( V, E \right)$ finite, $\beta \geqslant 0$,
  $\tmmathbf{h}: V \times \left[ 0, \beta \right] \rightarrow {\mathbf R}$ integrable and
  $\lambda \geqslant 0$. Define
  \begin{eqnarray*}
    T_{\tmop{mix}} & = & \inf \left\{ t \geqslant 0 : \forall \tmmathbf{\rho},
    \tmmathbf{\eta} \in \Sigma^V, \left\| P_t^{\tmmathbf{\rho}} -
    P_t^{\tmmathbf{\eta}} \right\|_{\tmop{TV}} \leqslant e^{- 1} \right\} .
  \end{eqnarray*}
  Then $T_{\tmop{mix}} < \infty$ and, for any $t \geqslant 0$,
  \[ \sup_{\tmmathbf{\rho} \in \Sigma^V} \left\| P_t^{\tmmathbf{\rho}} - \mu
     \right\|_{\tmop{TV}} \leqslant \sup_{\tmmathbf{\rho}, \tmmathbf{\eta} \in
     \Sigma^V} \left\| P_t^{\tmmathbf{\rho}} - P_t^{\tmmathbf{\eta}}
     \right\|_{\tmop{TV}} \leqslant e^{- \left\lfloor t / T_{\tmop{mix}}
     \right\rfloor} . \]
\end{proposition}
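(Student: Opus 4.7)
The plan is to work with the function $d(t) := \sup_{\tmmathbf{\rho}, \tmmathbf{\eta} \in \Sigma^V} \|P_t^{\tmmathbf{\rho}} - P_t^{\tmmathbf{\eta}}\|_{\tmop{TV}}$, prove it is submultiplicative, and then show $d(T) < 1$ for some finite $T$ via a direct coupling fed by Proposition~\ref{prop:gamma}.

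For the first inequality of the proposition, invariance $\mu P_t = \mu$ gives $P_t^{\tmmathbf{\rho}} - \mu = \int (P_t^{\tmmathbf{\rho}} - P_t^{\tmmathbf{\eta}})\,\mathd\mu(\tmmathbf{\eta})$, and convexity of $\|\cdot\|_{\tmop{TV}}$ yields $\sup_{\tmmathbf{\rho}} \|P_t^{\tmmathbf{\rho}} - \mu\|_{\tmop{TV}} \leqslant d(t)$. For submultiplicativity, I would use the semigroup identity $P_{t+s}^{\tmmathbf{\rho}}(f) = \int P_t^{\tmmathbf{\sigma}}(f)\,\mathd P_s^{\tmmathbf{\rho}}(\tmmathbf{\sigma})$ together with a maximal coupling $\Pi$ of $P_s^{\tmmathbf{\rho}}$ and $P_s^{\tmmathbf{\eta}}$: since $\|P_t^{\tmmathbf{\sigma}} - P_t^{\tmmathbf{\sigma}'}\|_{\tmop{TV}}$ vanishes on the diagonal and is bounded by $d(t)$ off-diagonal,
\begin{eqnarray*}
  \|P_{t+s}^{\tmmathbf{\rho}} - P_{t+s}^{\tmmathbf{\eta}}\|_{\tmop{TV}} & \leqslant & \int \|P_t^{\tmmathbf{\sigma}} - P_t^{\tmmathbf{\sigma}'}\|_{\tmop{TV}}\,\mathd\Pi \;\leqslant\; d(t)\, \|P_s^{\tmmathbf{\rho}} - P_s^{\tmmathbf{\eta}}\|_{\tmop{TV}},
\end{eqnarray*}
which, after taking the supremum, yields $d(t+s) \leqslant d(t)d(s)$.

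To show $d(T) < 1$ for some finite $T$, I would couple the two chains started from $\tmmathbf{\rho}$ and $\tmmathbf{\eta}$ by common rate-$1$ Poisson clocks on each site, and at each update at $x$ resample the two new spin trajectories from a maximal coupling of $\mu_x^{\tmmathbf{\rho}_t}$ and $\mu_x^{\tmmathbf{\eta}_t}$. Whatever the current configurations, the induced longitudinal field at $x$ has $L^1$ norm uniformly bounded by $\|\tmmathbf{h}\|_1 + \beta |V|$, so Proposition~\ref{prop:gamma} gives $\|\mu_x^{\tmmathbf{\rho}_t} - \mu_x^{\tmmathbf{\eta}_t}\|_{\tmop{TV}} \leqslant \gamma < 1$ uniformly, and each coupling attempt succeeds with conditional probability at least $1-\gamma$. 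Now fix $T$ so large that with probability at least $1/2$ every site has rung at least once in $[0,T]$; on this event, list the last updates in chronological order as $l_1 < \cdots < l_{|V|}$ and apply the chain rule to the success events $E_{l_k}$: each has conditional probability $\geqslant 1-\gamma$ given the entire past, so the probability that all last updates succeed is at least $(1-\gamma)^{|V|}$. When this happens $\tmmathbf{\rho}_T = \tmmathbf{\eta}_T$, hence $d(T) \leqslant 1 - \tfrac{1}{2} (1-\gamma)^{|V|} < 1$.

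Once $d(T) < 1$ is in hand, submultiplicativity gives $d(kT) \leqslant d(T)^k \leqslant e^{-1}$ for $k$ large enough, proving $T_{\tmop{mix}} \leqslant kT < \infty$. For the final bound, writing $t = nT_{\tmop{mix}} + r$ with $n = \lfloor t/T_{\tmop{mix}} \rfloor$ and $r \in [0, T_{\tmop{mix}})$, submultiplicativity together with $d(r) \leqslant 1$ gives $d(t) \leqslant d(T_{\tmop{mix}})^n \leqslant e^{-n}$, which is the stated inequality. The subtle step is the coupling estimate: the events "the last update at site $x$ succeeds" for distinct $x$ are not independent, because earlier non-last updates couple through the neighbors, but the \emph{uniformity} of the $\gamma$-bound of Proposition~\ref{prop:gamma} in the boundary configuration is precisely what allows the tower property to deliver the product bound $(1-\gamma)^{|V|}$.
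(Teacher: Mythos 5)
Your proof is correct. The first inequality (from invariance of $\mu$) and the submultiplicativity of $\bar{d}(t)$ via the semigroup identity and a maximal coupling are exactly the ingredients the paper uses (citing \cite{LPW09} for the latter). The genuine difference lies in how you establish $\bar{d}(T)<1$ for some finite $T$. The paper reads off a Doeblin-type minorization from the series expansion (\ref{Pt}): any product $\mu_{x_1}\cdots\mu_{x_n}$ with $\{x_1,\dots,x_n\}=V$ puts mass at least $c^{-|V|}$ on the constant-plus configuration, uniformly in the starting state, so $P_t^{\tmmathbf{\rho}}(\{+\})$ is bounded below once the Poissonized update schedule covers $V$ with positive probability, yielding $\bar{d}(t)<1$. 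You instead build an explicit grand coupling driven by common Poisson clocks with maximal single-site couplings, and invoke the uniform contraction $\gamma<1$ from Proposition~\ref{prop:gamma} (with $M=\|\tmmathbf{h}\|_1+\beta|V|$ controlling the induced field at any site) together with the tower property over the chronologically ordered last-update times, obtaining a merging probability of at least $\tfrac12(1-\gamma)^{|V|}$. Both routes rest on the same two facts --- the field seen at any site is bounded in $L^1$, and the Poisson schedule covers $V$ with positive probability --- but the paper's argument is more compact because it exploits that the all-plus configuration is an atom, $\varphi(\{+\})=e^{-\beta\lambda}/2>0$, whereas yours never needs an atom and would transfer to a continuous single-spin law. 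The caveat you flag about the non-independence of the last-update success events is precisely the right concern, and it is correctly resolved by the uniformity of the $\gamma$-bound in the conditioning configuration.
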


\begin{proof}
  The first inequality is a consequence of $\mu$ being invariant by $P_t$ (in
  other words, $\mu$ is a convex combination of the $P_t^{\tmmathbf{\eta}}$).
  The second inequality is classical consequence of
  \begin{eqnarray*}
    \bar{d} \left( t \right) & = & \sup_{\tmmathbf{\rho}, \tmmathbf{\eta} \in
    \Sigma^V} \left\| P_t^{\tmmathbf{\rho}} - P_t^{\tmmathbf{\eta}}
    \right\|_{\tmop{TV}}
  \end{eqnarray*}
  being sub-multiplicative, cf.~{\cite{LPW09}}. It remains to prove that
  $T_{\tmop{mix}} < \infty$, or equivalently that $\bar{d} \left( t \right) <
  1$ for some $t > 0$. This follows from (\ref{Pt}) once we remark that any
  $\mu_{x_1} \cdots \mu_{x_n}$ with $\left\{ x_1, \ldots, x_n \right\} = V$
  gives a probability at least $c^{- \left| V \right|}$ to the uniform plus
  state, uniformly in the starting state $\tmmathbf{\rho}$.
\end{proof}

\begin{proposition}
  Let some $G = \left( V, E \right)$ finite, $\beta \geqslant 0$,
  $\tmmathbf{h}: V \times \left[ 0, \beta \right] \rightarrow {\mathbf R}$ integrable and
  $\lambda \geqslant 0$. Define
  \begin{eqnarray*}
    \tmop{gap(\mathcal L)} & = & \inf_{f \in L^2 \left( \mu \right) : \tmop{Var} \left( f
    \right) > 0} \frac{\tmop{Cov} \left( f, -\mathcal{L}f \right)}{\tmop{Var}
    \left( f^2 \right)}
  \end{eqnarray*}
  where $\tmop{Cov}$ and $\tmop{Var}$ refer, respectively, to the covariance
  and the variance under $\mu$. Then
  \begin{enumerate}
    \item There exists $c < \infty$ depending on $\beta, \lambda$ and $\left\|
    \tmmathbf{h} \right\|_{\infty}$ such that
    \begin{eqnarray*}
      \tmop{gap (\mathcal L)} & \geqslant & c^{- \left| V \right|} .
    \end{eqnarray*}
    \item For any $f \in L^2 \left( \mu \right)$,
    \begin{eqnarray}
      \tmop{Var} \left( P_t f \right) & \leqslant & e^{- 2 t \tmop{gap(\mathcal L)}}
      \tmop{Var} \left( f \right) .  \label{var:gap}
    \end{eqnarray}
  \end{enumerate}
\end{proposition}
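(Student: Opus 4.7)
My plan is to prove the variance decay (\ref{var:gap}) first and then obtain the gap lower bound via a uniform Doeblin minorization. For (\ref{var:gap}): each operator $\mu_x$ is a conditional expectation, hence a self-adjoint orthogonal projection on $L^2(\mu)$, so $\mathcal L = \sum_x (\mu_x - I)$ is a bounded self-adjoint non-positive operator. A direct computation using idempotency gives
\[
-\langle f, \mathcal L f\rangle_{L^2(\mu)} \;=\; \sum_{x \in V} \mu\bigl(\tmop{Var}_x(f)\bigr),
\]
and for $f$ centered, $P_t f$ stays centered (since $\mu P_t = \mu$). Hence $\frac{\mathd}{\mathd t}\tmop{Var}(P_t f) = 2\langle P_t f, \mathcal L P_t f\rangle_{L^2(\mu)} \leq -2\tmop{gap}(\mathcal L)\,\tmop{Var}(P_t f)$ by the variational definition of the gap, and Gronwall yields (\ref{var:gap}).

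For the lower bound $\tmop{gap}(\mathcal L)\geq c^{-|V|}$, the strategy is to establish a uniform Doeblin minorization at a fixed time $t_0 = 1$: $P_{t_0}(\tmmathbf{\rho},\cdot) \geq \alpha\,\mu$ for every $\tmmathbf{\rho} \in \Sigma^V$, with $\alpha\geq c^{-|V|}$. To produce such an $\alpha$, I look in the expansion (\ref{Pt}) at the event $\mathcal A$ that during $[0,t_0]$ each site of $V$ receives at least one Poisson update. Since the numbers of updates at different sites are independent Poisson$(t_0)$ variables, $\mathbf P(\mathcal A)\geq (1 - e^{-t_0})^{|V|}$. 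Conditional on $\mathcal A$, the law of the output is a mixture, over ordered update sequences $(x_1,\dots,x_n)$ that cover $V$, of the composed kernel $\mu_{x_n}\cdots\mu_{x_1}(\tmmathbf{\rho},\cdot)$. For each such sequence, the density of this output with respect to the product reference measure $\prod_x \varphi_{[0,\beta],\lambda}$ is an explicit chain of single-site Gibbs conditional densities. Because the single-site Hamiltonian is uniformly bounded in terms of $\beta, \lambda, \|\tmmathbf h\|_\infty$ and the maximum degree of $G$, each factor $d\mu_x^{\cdot}/d\varphi_{[0,\beta],\lambda}$ is pinched between $e^{-C}$ and $e^{C}$, and the same bilateral bound holds for $d\mu/d\prod_x \varphi_{[0,\beta],\lambda}$. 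Multiplying across the $|V|$ factors gives the conditional output distribution $\geq c_2^{-|V|}\,\mu$ pointwise, so $\alpha \geq (1 - e^{-t_0})^{|V|}\, c_2^{-|V|} \geq c^{-|V|}$.

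With the minorization in hand, I decompose $P_{t_0} = \alpha\,K + (1-\alpha)\,Q$, where $K(\tmmathbf{\rho},\cdot) = \mu$ and $Q$ is a Markov kernel that still leaves $\mu$ invariant (from $\mu P_{t_0} = \mu$). Any Markov kernel with invariant measure $\mu$ is an $L^2(\mu)$-contraction (by Jensen), so for every centered $f$ one has $\|P_{t_0}f\|_{L^2(\mu)} \leq (1-\alpha)\|f\|_{L^2(\mu)}$, and this translates into $\tmop{gap}(\mathcal L)\geq -\log(1-\alpha)/t_0\geq \alpha/t_0 \geq c^{-|V|}$ after relabeling the constant. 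The delicate technical point in this plan is the pointwise density bound: one has to check carefully, in the continuous cadlag-path setting, that the single-site heat-bath Radon--Nikodym derivative $d\mu_x^{\tmmathbf{\eta}}/d\varphi_{[0,\beta],\lambda}$ is uniformly bilaterally bounded independently of the boundary condition $\tmmathbf{\eta}$ and of the intermediate random values that appear when composing the kernels, which follows directly from the explicit formula for $\mu_x^{\tmmathbf{\eta}}$ and the boundedness of the one-site interaction.
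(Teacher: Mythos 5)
Your proof is correct, but it takes a genuinely different route from what the paper does. The paper gives no self-contained argument: for the variance decay it simply says ``standard'', and for the gap lower bound it refers to Theorem 6.4 of Martinelli's Saint-Flour notes \cite{Ma97}. The argument there is a Poincar\'e inequality: one proves directly, by a telescoping/Cauchy--Schwarz decomposition and bounded Radon--Nikodym derivatives, that $\tmop{Var}(f)\leqslant c^{|V|}\sum_{x}\mu\bigl(\tmop{Var}_x(f)\bigr)$, and then identifies the right-hand side with $-c^{|V|}\,\mu(f\,\mathcal Lf)$ using that each $\mu_x$ is an orthogonal projection. You instead convert the same density input into a uniform Doeblin minorization $P_{t_0}(\tmmathbf\rho,\cdot)\geqslant\alpha\mu$ (by restricting to histories in which every site is updated, and lower-bounding the chain of single-site conditional densities after integrating out the intermediate resamplings), and then use self-adjointness of $P_{t_0}=e^{t_0\mathcal L}$ to get $\tmop{gap}(\mathcal L)\geqslant -\log(1-\alpha)/t_0$. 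Both proofs are elementary and essentially equivalent in strength; yours has the advantage of also exhibiting $T_{\tmop{mix}}\lesssim c^{|V|}$ for free, while the Poincar\'e route stays entirely at the level of Dirichlet forms. Your treatment of the variance-decay part (semigroup differentiation plus Gronwall, using that $\mu P_t=\mu$ keeps $P_tf$ centered) is the standard one and is what the paper alludes to.

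One small caveat worth making explicit, which is shared by your argument and by the cited one and is papered over in the proposition's statement: the constant $c$ in the single-site density bound $e^{-C}\leqslant d\mu_x^{\tmmathbf\eta}/d\varphi_{[0,\beta],\lambda}\leqslant e^{C}$ necessarily depends on the maximum degree of $G$ (through the neighbour contribution to the local field, which is bounded only by $\beta\,\deg(x)$), so $c$ really depends on $\beta,\lambda,\|\tmmathbf h\|_\infty$ \emph{and} $\max_x\deg(x)$. You do flag this in your proof, which is correct; the proposition's wording is just loose, and it is harmless for the bounded-degree trees to which the paper applies it. I would also note, for completeness, that in lower-bounding the pushed-forward density one must integrate out the ``non-final'' resampled values $\xi_k$ in decreasing order of $k$, after replacing the $|V|$ ``last-update'' factors by their uniform lower bound $e^{-C}$; this is what makes the intermediate integrals collapse to $1$ and gives $p(\sigma)\geqslant e^{-C|V|}$ regardless of how long the covering update sequence is. Your sketch compresses this step, but it is correct as stated.
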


\begin{proof}
  The proof of the second point is standard. For the first one we refer the
  reader to the proof of Theorem 6.4 in the Saint Flour course {\cite{Ma97}}.
\end{proof}

\subsection{Monotonicity}

Now we address the question of the monotonicity of the dynamics. An immediate
consequence of (\ref{Pt}) together with the monotonicity of the single site
measure (Theorem \ref{thm:mono}) is the following fact:

\begin{proposition}
  Take $\tmmathbf{\rho}, \tmmathbf{\eta} \in \Sigma^V$ such that
  $\tmmathbf{\rho} \leqslant \tmmathbf{\eta}$, and $\tmmathbf{h} \leqslant
  \tilde{\tmmathbf{h}}$. Denote by $\tilde{P}_t$ the semi-group corresponding
  to field $\tilde{\tmmathbf{h}}$. Then,
  \begin{eqnarray*}
    P_t^{\tmmathbf{\rho}} & \underset{\tmop{stoch} .}{\leqslant} &
    \tilde{P}_t^{\tmmathbf{\eta}} .
  \end{eqnarray*}
\end{proposition}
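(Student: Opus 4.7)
The plan is to reduce the stochastic domination of the semigroups to a sequence-by-sequence monotone coupling of single-site updates, and then use Theorem \ref{thm:mono} to construct each step.

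First, I would exploit the representation (\ref{Pt}). Observe that the Poisson weights $e^{-t|V|}(t|V|)^n/n!$ and the uniform choice of the update sites $x_1,\dots,x_n\in V$ depend only on $t$ and $|V|$, hence are the \emph{same} for $P_t$ and $\tilde P_t$. Consequently it is enough to fix an arbitrary finite sequence $(x_1,\ldots,x_n)$ of update sites and prove
\[
\mu^{\tmmathbf{\rho}}_{x_1}\mu_{x_2}\cdots\mu_{x_n}
\ \underset{\mathrm{stoch}.}{\leqslant}\
\tilde\mu^{\tmmathbf{\eta}}_{x_1}\tilde\mu_{x_2}\cdots\tilde\mu_{x_n},
\]
where $\tilde\mu_x$ denotes the single-site kernel associated with the field $\tilde{\tmmathbf{h}}$. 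Integrating this domination against the common Poisson/uniform weights yields the proposition.

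Next, I would construct a Markovian monotone coupling $\Psi_n$ of the two discrete-time chains $(\tmmathbf{\rho}^{(k)})_{k\le n}$, $(\tmmathbf{\eta}^{(k)})_{k\le n}$ with $\tmmathbf{\rho}^{(0)}=\tmmathbf{\rho}$, $\tmmathbf{\eta}^{(0)}=\tmmathbf{\eta}$ by induction on $k$. Assume $\tmmathbf{\rho}^{(k)}\leqslant\tmmathbf{\eta}^{(k)}$ pointwise a.s.~on $V$. At step $k+1$ we resample only the site $x=x_{k+1}$ using a joint kernel on the single site $x$. By the DLR equation recalled just before the statement, the laws of the updated spin in the two chains are respectively the single-site measures at $x$ with longitudinal fields
\[
\tmmathbf{h}_x+\sum_{y\sim x}\tmmathbf{\rho}^{(k)}_y
\qquad\text{and}\qquad
\tilde{\tmmathbf{h}}_x+\sum_{y\sim x}\tmmathbf{\eta}^{(k)}_y,
\]
both bounded and with $\tmmathbf{h}\leqslant\tilde{\tmmathbf{h}}$, $\tmmathbf{\rho}^{(k)}\leqslant\tmmathbf{\eta}^{(k)}$ point-wise. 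Hence the second field dominates the first pointwise a.s., and Theorem~\ref{thm:mono} gives stochastic domination of the corresponding single-site measures. By Strassen's theorem there exists a monotone coupling of these two single-site laws; we use it to jointly resample the spin at $x$. The remaining coordinates are unchanged, so the ordering is preserved on $V\setminus\{x\}$ by the induction hypothesis and at $x$ by construction, closing the induction.

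The only point that requires a bit of care is measurability, but this is routine: the single-site measures $\mu^{\tmmathbf{\rho}}_x$ are continuous in $\tmmathbf{\rho}$ (by (\ref{mu:Gamma})), and a measurable selection of monotone couplings on the Polish space $\Sigma_{[0,\beta]}$ can be obtained via the standard Skorohod/Strassen construction. Once $\Psi_n$ is built, its two marginals are exactly $\mu^{\tmmathbf{\rho}}_{x_1}\cdots\mu_{x_n}$ and $\tilde\mu^{\tmmathbf{\eta}}_{x_1}\cdots\tilde\mu_{x_n}$, and it is supported on ordered pairs, which proves the desired domination for every fixed site sequence and hence, after averaging, for the semigroups.
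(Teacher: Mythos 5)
Your argument is correct and matches the approach the paper has in mind: the paper simply asserts that the proposition is an ``immediate consequence'' of the expansion (\ref{Pt}) together with Theorem~\ref{thm:mono}, and your proof fills in precisely those steps — reduce via (\ref{Pt}) to a fixed update sequence, then build a step-by-step monotone coupling using the DLR equation and the single-site stochastic domination of Theorem~\ref{thm:mono}. This is also consistent with the remark that immediately follows the proposition in the paper, which observes that the \emph{same} kind of stepwise construction would yield a grand coupling if one had a single-site grand coupling; your pairwise construction (via Strassen, or via the explicit coupling of Lemma~\ref{lem:coupling} composed with the field increase) is exactly what is needed for the two-configuration statement, and no grand coupling is required.
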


According to the convergence towards the equilibrium measure, it follows that

\begin{corollary} \label{cor:mu:mono}
  $\mu_{\tmmathbf{h}}$ increases stochastically with the field $\tmmathbf{h}$.
\end{corollary}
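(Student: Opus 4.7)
The plan is to derive the corollary as a routine consequence of the monotonicity of the dynamics (the proposition immediately preceding it) together with the convergence to equilibrium guaranteed by the mixing-time bound. The idea is to compare two copies of the Glauber chain driven by the two different fields starting from a common initial state, exploit the stochastic domination at every finite time, and then let $t \to \infty$.

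Concretely, fix $\tmmathbf{h} \leqslant \tilde{\tmmathbf{h}}$ and let $P_t$, $\tilde P_t$ be the corresponding Glauber semigroups, with invariant measures $\mu_{\tmmathbf{h}}$ and $\mu_{\tilde{\tmmathbf{h}}}$. Pick any $\tmmathbf{\rho} \in \Sigma^V$ and apply the previous proposition with $\tmmathbf{\eta} = \tmmathbf{\rho}$: for every $t \geqslant 0$ and every bounded increasing function $f : \Sigma^V \to \mathbf R$,
\begin{eqnarray*}
P_t^{\tmmathbf{\rho}}(f) & \leqslant & \tilde P_t^{\tmmathbf{\rho}}(f).
\end{eqnarray*}
The mixing-time proposition applied separately to each of the two fields yields $\|P_t^{\tmmathbf{\rho}} - \mu_{\tmmathbf{h}}\|_{\mathrm{TV}} \to 0$ and $\|\tilde P_t^{\tmmathbf{\rho}} - \mu_{\tilde{\tmmathbf{h}}}\|_{\mathrm{TV}} \to 0$ as $t \to \infty$. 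Since $f$ is bounded, integration against $f$ is continuous in total-variation distance, so passing to the limit in the displayed inequality gives $\mu_{\tmmathbf{h}}(f) \leqslant \mu_{\tilde{\tmmathbf{h}}}(f)$ for every bounded increasing $f$, which is exactly the stochastic domination $\mu_{\tmmathbf{h}} \prec \mu_{\tilde{\tmmathbf{h}}}$. (The extension from bounded to general integrable increasing functions is standard monotone-class truncation.)

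There is essentially no technical obstacle: the previous proposition provides the monotone coupling at each finite time, and the finite-$V$ mixing result provides the qualitative convergence needed to take $t \to \infty$. The only point that requires any care is to restrict to bounded increasing test functions so that the passage to the limit is justified by the total-variation convergence; this is purely a matter of bookkeeping.
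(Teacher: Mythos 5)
Your proposal is correct and follows exactly the route the paper takes: the paper derives the corollary from the preceding monotonicity-of-the-semigroup proposition together with the earlier mixing-time/convergence result, which is precisely your argument. The paper simply states "According to the convergence towards the equilibrium measure, it follows that..." without spelling out the passage to the limit that you correctly fill in.
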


\begin{remark}
  The same argument as above could be used to establish the existence of a
  grand coupling, but for this we would need to know the existence of a grand
  coupling for the family of single spin measures, given an arbitrary family
  of external fields.
\end{remark}

\subsection{Censoring}

For any $A \subset V$, we let
\begin{eqnarray*}
  \mathcal{L}_A & = & \sum_{x \in A} \left( \mu_x - I \right) .
\end{eqnarray*}
Now we consider a function $A : {\mathbf R}^+ \rightarrow \mathcal{P} \left( V
\right)$ with finitely many discontinuities at $t_0 = 0 < t_1 < \ldots < t_n$.
We define the censored dynamics according to $A$ by the kernel
\begin{eqnarray}
  P_{A ; t} & = & e^{\left( t_1 - t_0 \right) \mathcal{L}_{A_0}} e^{\left( t_2
  - t_1 \right) \mathcal{L}_{A_1}} \cdots e^{\left( t - t_k \right)
  \mathcal{L}_{A_k}}  \label{PAt}
\end{eqnarray}
where $k$ is the largest integer in $\left\{ 0, \ldots, n \right\}$ such that
$t \geqslant t_k$, and $A_i = A \left( t_i^+ \right)$. Of course, when $A
\left( t \right) = V$ for any $t \geqslant 0$ we get $P_{A ; t} = P_t$, the
uncensored dynamics. The theory of censoring due to Peres and Winkler
{\cite{PeresUBC}} also applies here. Remarkably their result on total
variation extends also to variance and entropy.

\begin{proposition}
  \label{prop:censoring}Consider $A, B : {\mathbf R}^+ \rightarrow \mathcal{P}
  \left( V \right)$ as above. Assume that, for any $t \geqslant 0$, $A \left(
  t \right) \subset B \left( t \right)$. Assume that $\nu$ is absolutely
  continuous with respect to $\mu$ with $\mathd \nu / \mathd \mu\in
  L^2(d\mu)$ and increasing.\mnote{Fabio}
  Then, for any $t \geqslant 0$, both $\nu P_{A ; t}$ and $\nu P_{B ; t}$ are
  absolutely continuous with respect to $\mu$, their Radon-Nikodym derivative
  is increasing and $\nu P_{B ; t} \prec \nu P_{A ; t}$. Moreover the following
  inequalities hold:
  \begin{eqnarray}
    \tmop{Var} \left( \frac{\mathd \left( \nu P_{B ; t} \right)}{\mathd \mu}
    \right) & \leqslant & \tmop{Var} \left( \frac{\mathd \left( \nu P_{A ; t}
    \right)}{\mathd \mu} \right)  \label{cens:var}\\
    \tmop{Ent} \left( \frac{\mathd \left( \nu P_{B ; t} \right)}{\mathd \mu}
    \right) & \leqslant & \tmop{Ent} \left( \frac{\mathd \left( \nu P_{A ; t}
    \right)}{\mathd \mu} \right)  \label{cens:ent}\\
    \left\| \nu P_{B ; t} - \mu \right\|_{\tmop{TV}} & \leqslant & \left\| \nu
    P_{A ; t} - \mu \right\|_{\tmop{TV}} .  \label{cens:tv}
  \end{eqnarray}
\end{proposition}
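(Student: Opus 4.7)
The argument follows the Peres-Winkler scheme, with the quantum inputs supplied by Theorem~\ref{thm:mono} and Corollary~\ref{cor:FKG}. First I reduce to a single-site claim. Each semigroup $e^{s\mathcal{L}_A}$ admits a Poisson clock representation: take a rate-$1$ Poisson process on $A\times{\mathbf R}^+$ and apply $\mu_x$ at each clock point $(x,\cdot)$; the resulting random product, averaged over the process, equals $e^{s\mathcal{L}_A}$. Concatenating such representations across the successive intervals $[t_i,t_{i+1}]$ realizes both $P_{A;t}$ and $P_{B;t}$ as mixtures over a common underlying clock on $V\times{\mathbf R}^+$; on each realization the $B$-schedule is the $A$-schedule augmented by the insertion of additional updates $\mu_x$ at those clock points for which $x\in B(s)\setminus A(s)$. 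It therefore suffices to prove the following single-step lemma: if $f\in L^2(\mu)$ is an increasing probability density with respect to $\mu$, then $\mu_x f$ is again an increasing probability density and $(\mu_x f)\,\mu\prec f\,\mu$.

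Monotonicity preservation follows from Theorem~\ref{thm:mono}: for $\tmmathbf{\rho}\leq\tmmathbf{\eta}$ the effective fields at $x$ are ordered, so $\mu_x^{\tmmathbf{\rho}}\prec\mu_x^{\tmmathbf{\eta}}$, and a monotone coupling of these two single-site measures, combined with the monotonicity of $f$ on $\Sigma^V$, yields $(\mu_x f)(\tmmathbf{\rho})\leq(\mu_x f)(\tmmathbf{\eta})$. For the stochastic decrease, test against an arbitrary bounded increasing $g$. The kernel $\mu_x$ is the conditional expectation with respect to the $\sigma$-algebra $\mathcal{G}_x$ generated by $\{\tmmathbf{\sigma}_y\}_{y\neq x}$; it is therefore self-adjoint on $L^2(\mu)$, and $\mu_x g$ is $\mathcal{G}_x$-measurable, whence
\begin{eqnarray*}
\mu(fg)-\mu\bigl((\mu_x f)\,g\bigr) &=& \mu\bigl(f(g-\mu_x g)\bigr)\\
&=& \mu\bigl[\mu_x(fg)-(\mu_x f)(\mu_x g)\bigr]\;=\;\mu\bigl[\tmop{Cov}_{\mu_x}(f,g)\bigr].
\end{eqnarray*}
For each fixed $\tmmathbf{\rho}$ the measure $\mu_x^{\tmmathbf{\rho}}$ is a single-site Gibbs measure of the type covered by Corollary~\ref{cor:FKG}, and $f,g$ viewed as functions of the remaining coordinate $\tmmathbf{\sigma}_x$ are still increasing; FKG therefore yields $\tmop{Cov}_{\mu_x^{\tmmathbf{\rho}}}(f,g)\geq 0$ for every $\tmmathbf{\rho}$, and the expression above is non-negative. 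Iterating along each realization of the Poisson clock—using that stochastic ordering is preserved by any subsequent $\mu_y$-update because $g$ increasing implies $\mu_y g$ increasing by the same coupling argument—one obtains $\nu P_{B;t}\prec\nu P_{A;t}$.

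The three quantitative inequalities (\ref{cens:var})--(\ref{cens:tv}) follow by the same insertion-and-induction mechanism, using that each $\mu_x$ contracts the relevant distance from $\mu$: the variance inequality because $\mu_x$ is an orthogonal projection in $L^2(\mu)$, so $\tmop{Var}(\mu_x f)\leq \tmop{Var}(f)$; the entropy inequality by Jensen applied to the convex function $u\mapsto u\log u$, so $\tmop{Ent}_\mu(\mu_x f)\leq \tmop{Ent}_\mu(f)$; and the total variation inequality from the $L^1(\mu)$-contraction property of conditional expectations, which is equivalent to $\|(\mu_x f)\mu-\mu\|_{\tmop{TV}}\leq\|f\mu-\mu\|_{\tmop{TV}}$. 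Every $\mu_y$-update appearing later in the common sequence preserves each of these three distances, so the insertion-level inequality propagates to the full dynamics.

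The main conceptual difficulty is that $P_{A;t}$ and $P_{B;t}$ involve non-commuting semigroups, so one cannot simply write $P_{B;t}=P_{A;t}K$ for an auxiliary Markov kernel $K$; the Poisson clock construction, together with the fact that $e^{s(\mu_x-I)}=e^{-s}I+(1-e^{-s})\mu_x$ is a genuine two-point convex combination, is precisely what allows a realization-by-realization reduction to the single-step lemma. Once this reduction is in place, everything else follows formally from monotonicity of $\mu_x^{\tmmathbf{\rho}}$ in $\tmmathbf{\rho}$ (Theorem~\ref{thm:mono}), FKG for the single-site measure (Corollary~\ref{cor:FKG}), and the standard contraction properties of conditional expectations.
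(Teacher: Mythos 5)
The first half of your argument—establishing that both $\nu P_{A;t}$ and $\nu P_{B;t}$ have increasing Radon--Nikodym derivatives, and that $\nu P_{B;t}\prec\nu P_{A;t}$—is correct and follows essentially the same route as the paper (the paper proves the two single-step lemmas by the same reasoning: Theorem~\ref{thm:mono} for preservation of monotonicity, and Corollary~\ref{cor:FKG} plus self-adjointness of the conditional expectation $\mu_x$ for the stochastic decrease $\nu\mu_x\prec\nu$). The Poisson-clock realization you describe is precisely what formula (\ref{Pt}) and (\ref{PAt}) encode.

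However, your derivation of the three quantitative inequalities (\ref{cens:var})--(\ref{cens:tv}) contains a genuine gap. You argue that each inserted update $\mu_x$ shrinks the variance/entropy/TV distance from $\mu$ (all true, by projection/Jensen/$L^1$-contraction), and that the inequality then ``propagates'' because later $\mu_y$-updates ``preserve each of these three distances.'' But the contraction property of $\mu_y$ compares a density to \emph{its own} image under $\mu_y$; it does not say that $\mu_y$ preserves the \emph{ordering} of two different densities' distances to $\mu$. Concretely, if $g_B=\mu_x g_A$ is the density at the insertion point and $T$ is the subsequent product of updates, you know $\tmop{Var}(g_B)\leqslant\tmop{Var}(g_A)$ and $\tmop{Var}(Tg_B)\leqslant\tmop{Var}(g_B)$, $\tmop{Var}(Tg_A)\leqslant\tmop{Var}(g_A)$, but none of this yields $\tmop{Var}(Tg_B)\leqslant\tmop{Var}(Tg_A)$: a conditional expectation is not a homothety, and it may well contract $g_A$ more aggressively than $g_B$. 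The same objection applies to entropy and total variation. This is exactly why the Peres--Winkler theorem is nontrivial and why one cannot argue step-by-step on the quantitative distances.

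The correct way to close the argument—and what the paper does—is to deduce (\ref{cens:var})--(\ref{cens:tv}) \emph{directly} from the two qualitative facts you already proved (stochastic ordering and increasing density), using variational/duality representations. For the variance, write $g_A=\mathd(\nu P_{A;t})/\mathd\mu$, $g_B=\mathd(\nu P_{B;t})/\mathd\mu$; then
$\tmop{Var}(g_B)=\nu P_{B;t}(g_B)-1\leqslant\nu P_{A;t}(g_B)-1=\tmop{Cov}(g_A,g_B)\leqslant\tmop{Var}(g_A)^{1/2}\tmop{Var}(g_B)^{1/2}$,
where the first inequality uses that $g_B$ is increasing and $\nu P_{B;t}\prec\nu P_{A;t}$, and the last is Cauchy--Schwarz. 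For the entropy, use $\tmop{Ent}(g_B)=\nu P_{B;t}(\log g_B)\leqslant\nu P_{A;t}(\log g_B)=\mu(g_A\log g_B)\leqslant\tmop{Ent}(g_A)$, the final step being the Donsker--Varadhan variational characterization $\tmop{Ent}(f)=\sup\{\mu(fg):\mu(e^g)=1\}$ applied to $g=\log g_B$. For total variation, $\|\nu P_{B;t}-\mu\|_{\tmop{TV}}=\nu P_{B;t}(g_B\geqslant 1)-\mu(g_B\geqslant 1)\leqslant\nu P_{A;t}(g_B\geqslant 1)-\mu(g_B\geqslant 1)\leqslant\|\nu P_{A;t}-\mu\|_{\tmop{TV}}$, using again that $\{g_B\geqslant 1\}$ is an increasing event. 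So the mechanism is not contraction under $\mu_x$ but the combination of stochastic domination with monotonicity of the densities.
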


\begin{remark}
A special case satisfying the assumptions of the proposition is when
$\nu $ is concentrated on the identical equal to $+$
configuration. In that case we will write $\nu P_{A ; t} = P_{A ; t}^+$
  and $\nu P_{B ; t} = P_{B ; t}^+$. 
\end{remark}

Following {\cite{PeresUBC}} we begin the proof with two lemmas.

\begin{lemma}
  \label{lem:PW}Consider $\nu$ some measure on $\Sigma^V$, and assume that
  $\nu$ is absolutely continuous with respect to $\mu$ with $\frac{\mathd
  \nu}{\mathd \mu}$ being an increasing function. Then $\nu \mu_x$ is
  absolutely continuous with respect to $\mu$ and its Radon-Nikodym derivative
  is increasing as well.
\end{lemma}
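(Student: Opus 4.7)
The plan is to identify the Radon-Nikodym density $\mathd(\nu\mu_x)/\mathd\mu$ explicitly and then deduce its monotonicity from the single-site monotonicity of Theorem~\ref{thm:mono}. Writing $g = \mathd\nu/\mathd\mu$, since $\mu$ is invariant under the kernel $\mu_x$ (the DLR equation gives $\mu\mu_x = \mu$), for any bounded $f$ one has
$$\nu\mu_x(f) \;=\; \int \mu_x^{\tmmathbf{\rho}}(f)\, g(\tmmathbf{\rho})\, \mathd\mu(\tmmathbf{\rho}).$$
The map $\tmmathbf{\rho}\mapsto \mu_x^{\tmmathbf{\rho}}(f)$ is measurable with respect to the spins outside $x$, being a version of $\mathbb E_\mu[f\mid \mathcal F_{V\setminus\{x\}}]$. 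Two applications of the tower property therefore give $\nu\mu_x(f) = \mathbb E_\mu[f\,\bar g]$, where
$$\bar g(\tmmathbf{\eta}) \;:=\; \mu_x^{\tmmathbf{\eta}}(g) \;=\; \mathbb E_\mu\!\left[g\mid \mathcal F_{V\setminus\{x\}}\right](\tmmathbf{\eta}),$$
so that $\mathd(\nu\mu_x)/\mathd\mu = \bar g$, which lies in $L^2(\mu)$ by Jensen.

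Next I would check that $\bar g$ is increasing, which is the heart of the statement. Fix $\tmmathbf{\eta}\leq \tmmathbf{\eta}'$ pointwise a.s.\ on $V\setminus\{x\}$ and write the comparison of $\bar g(\tmmathbf{\eta})$ and $\bar g(\tmmathbf{\eta}')$ as a chain of two inequalities. \emph{Step 1}: since $g$ is increasing in all coordinates, for each value of $\sigma_x$ we have $g(\sigma_x,\tmmathbf{\eta}_{V\setminus\{x\}}) \leq g(\sigma_x,\tmmathbf{\eta}'_{V\setminus\{x\}})$, so integrating against $\mu_x^{\tmmathbf{\eta}}$ yields $\bar g(\tmmathbf{\eta}) \leq \mu_x^{\tmmathbf{\eta}}\!\left[g(\cdot,\tmmathbf{\eta}'_{V\setminus\{x\}})\right]$. \emph{Step 2}: the single-site measure $\mu_x^{\tmmathbf{\rho}}$ is exactly the Gibbs measure on $\{x\}$ with effective longitudinal field $\tmmathbf{h}_x + \sum_{y\sim x}\tmmathbf{\rho}_y$, which is monotone in $\tmmathbf{\rho}$; Theorem~\ref{thm:mono} therefore gives $\mu_x^{\tmmathbf{\eta}}\prec \mu_x^{\tmmathbf{\eta}'}$, and since $\sigma_x\mapsto g(\sigma_x,\tmmathbf{\eta}'_{V\setminus\{x\}})$ is increasing, one gets $\mu_x^{\tmmathbf{\eta}}[g(\cdot,\tmmathbf{\eta}'_{V\setminus\{x\}})] \leq \mu_x^{\tmmathbf{\eta}'}[g(\cdot,\tmmathbf{\eta}'_{V\setminus\{x\}})] = \bar g(\tmmathbf{\eta}')$. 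Chaining Step~1 and Step~2 finishes the proof.

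The only caveat is the usual measure-theoretic one: $g$ is \emph{a priori} defined only $\mu$-a.s., whereas $\bar g$ must be meaningful for every $\tmmathbf{\eta}$. This is handled by picking the canonical version $\tmmathbf{\eta}\mapsto \mu_x^{\tmmathbf{\eta}}(g)$ provided by the kernel $\mu_x^{\cdot}$, whose continuity in $\tmmathbf{\rho}$ is ensured by (\ref{mu:Gamma}) of Proposition~\ref{prop:gamma}. The substantive input in the whole argument is Step~2, which is exactly where the strict stochastic monotonicity of the single-site measure from Theorem~\ref{thm:mono} is used.
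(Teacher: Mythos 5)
Your proof is correct and follows essentially the same route as the paper's: both identify $\mathd(\nu\mu_x)/\mathd\mu$ as the conditional expectation $\mu_x^{\tmmathbf\cdot}(\mathd\nu/\mathd\mu)$ of the density given the spins outside $x$, and then establish monotonicity by the same two-step chain — first raising the argument of the increasing density outside $x$, then using the stochastic monotonicity of $\mu_x^{\tmmathbf{\rho}}$ in its boundary condition (Theorem~\ref{thm:mono}). Your Step~1/Step~2 split and the remark about choosing the kernel version of $\bar g$ are just slightly more explicit than the paper's terse three-line computation, but the content is the same.
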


\begin{proof}
  Let $\tmmathbf{\sigma} \leqslant \tmmathbf{\tau}$ and assume that $\mathd
  \nu / \mathd \mu$ is increasing. We denote by $\tmmathbf{\sigma}^{\star}$
  the spin configuration on $V \setminus \left\{ x \right\}$ equal to
  $\tmmathbf{\sigma}$ on $V \setminus \left\{ x \right\}$, and by
  $\tmmathbf{\sigma}_x^{\tmmathbf{\xi}}$ the spin configuration on $V$ equal
  to $\tmmathbf{\xi}$ at $x$ and to $\tmmathbf{\sigma}$ on $V \setminus
  \left\{ x \right\}$. Then
  \begin{eqnarray*}
    \frac{\mathd \left( \nu \mu_x \right)}{\mathd \mu} (\tmmathbf{\sigma}) & =
    & \frac{\mathd \nu}{\mathd \mu} (\tmmathbf{\sigma}^{\star}) = \int \mu
    (\mathd \tmmathbf{\xi}|\tmmathbf{\sigma}^{\star}) \frac{\mathd \nu}{\mathd
    \mu} (\tmmathbf{\sigma}_x^{\tmmathbf{\xi}})\\
    & \leqslant & \int \mu (\mathd \tmmathbf{\xi}|\tmmathbf{\sigma}^{\star})
    \frac{\mathd \nu}{\mathd \mu} (\tmmathbf{\tau}_x^{\tmmathbf{\xi}})\\
    & \leqslant & \int \mu (\mathd \tmmathbf{\xi}|\tmmathbf{\tau}^{\star})
    \frac{\mathd \nu}{\mathd \mu} (\tmmathbf{\tau}_x^{\tmmathbf{\xi}}) =
    \frac{\mathd \left( \nu \mu_x \right)}{\mathd \mu} \left( \tmmathbf{\tau}
    \right)
  \end{eqnarray*}
  where we used, in the first line, the fact that the density does not depend
  on $\tmmathbf{\sigma}_x$ since that spin is resampled, and at the second
  line the assumption that $\mathd \nu / \mathd \mu$ is increasing, and
  finally the fact that the single spin marginal increases with the external
  field (Theorem \ref{thm:mono}).
\end{proof}

\begin{lemma}
  \label{lem:mx:dec}Consider $\nu$ some measure on $\Sigma^V$, and assume that
  $\nu$ is absolutely continuous with respect to $\mu$ with $\frac{\mathd
  \nu}{\mathd \mu}$ being an increasing function. Then $\nu \mu_x \prec \nu$.
\end{lemma}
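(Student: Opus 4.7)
The plan is to test the stochastic inequality $\nu\mu_x\prec\nu$ against an arbitrary bounded increasing function $f$ on $\Sigma^V$, and reduce it to the single-site FKG inequality of Corollary \ref{cor:FKG}. Setting $g:=\mathd\nu/\mathd\mu$, which is increasing by hypothesis, I would use that $\mu_x$ is the conditional expectation under $\mu$ given the spins $(\tmmathbf{\sigma}_y)_{y\neq x}$; in particular it is self-adjoint on $L^2(\mu)$ and leaves invariant every function that does not depend on $\tmmathbf{\sigma}_x$. Since $\mu_x f$ is measurable with respect to $(\tmmathbf{\sigma}_y)_{y\neq x}$, this gives the key identity
\begin{equation*}
\nu\mu_x(f) \;=\; \int g\,(\mu_x f)\,\mathd\mu \;=\; \int (\mu_x g)\,(\mu_x f)\,\mathd\mu .
\end{equation*}

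Combining with $\nu(f)=\int g f\,\mathd\mu=\int \mu_x(gf)\,\mathd\mu$, the tower property rewrites the difference as
\begin{equation*}
\nu(f)-\nu\mu_x(f) \;=\; \int\bigl[\mu_x(gf)-(\mu_x g)(\mu_x f)\bigr]\,\mathd\mu \;=\; \int \tmop{Cov}_{\mu_x^{\tmmathbf{\rho}}}(g,f)\,\mathd\mu(\tmmathbf{\rho}),
\end{equation*}
where the inner integrand is the covariance of $g(\cdot,\tmmathbf{\rho})$ and $f(\cdot,\tmmathbf{\rho})$ viewed as functions of the single spin $\tmmathbf{\sigma}_x$ with the other spins frozen to $\tmmathbf{\rho}$, computed under the single-site measure $\mu_x^{\tmmathbf{\rho}}$.

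To conclude I would observe that, for every frozen configuration $\tmmathbf{\rho}$, both $g(\cdot,\tmmathbf{\rho})$ and $f(\cdot,\tmmathbf{\rho})$ are increasing functions of $\tmmathbf{\sigma}_x$, since they are restrictions of functions that are increasing in the coordinate-wise partial order on $\Sigma^V$. Corollary \ref{cor:FKG} applied to the single-site measure $\mu_x^{\tmmathbf{\rho}}$ (which is of the form $\mu^{\emptyset}$ with external field $\tmmathbf{h}_x+\sum_{y\sim x}\tmmathbf{\rho}_y$) then gives $\tmop{Cov}_{\mu_x^{\tmmathbf{\rho}}}(g,f)\geqslant 0$ for every $\tmmathbf{\rho}$, whence $\nu(f)\geqslant \nu\mu_x(f)$ upon integration. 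The whole argument is the quantum analogue of the Peres--Winkler step, and the only substantive ingredient is the single-site FKG inequality already proven, so no real obstacle arises; the use of increasingness of $g$ is crucial, as it is exactly what turns the covariance integrand into the FKG-positive quantity.
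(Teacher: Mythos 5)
Your argument is correct and follows essentially the same route as the paper's proof: both condition on the configuration outside $x$ and reduce the positivity of $\nu(f)-\nu\mu_x(f)$ to the single-site FKG inequality of Corollary \ref{cor:FKG}. The only (cosmetic) difference is that you average the conditional covariance against $\mu$ after using self-adjointness of $\mu_x$, whereas the paper averages the normalized quantity against $\nu$; the two expressions coincide after the change of measure $\mathd\nu/\mathd\mu=g$.
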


\begin{proof}
  Contrary to {\cite{PeresUBC}} the set of single spins configurations is not
  totally ordered. For this reason we use an alternative proof based on the
  FKG inequality for single spin measures. Let $f$ increasing. We start with
  \begin{eqnarray*}
    \nu \left( f \right) - \left( \nu \mu_x \right) \left( f \right) & = & \nu
    \left( f - \mu_x \left( f \right) \right)
  \end{eqnarray*}
  and condition on $\tmmathbf{\sigma}^{\star}$, the spin configuration outside
  $x$. We have
  \begin{eqnarray*}
    \nu \left( f - \mu_x \left( f \right) \left| \tmmathbf{\sigma}^{\star}
    \right. \right) & = & \mu_x^{\tmmathbf{\sigma}^{\star}} \left( \left[
    \frac{\frac{\mathd \nu}{\mathd \mu}}{\mu_x^{\tmmathbf{\sigma}^{\star}}
    \left( \frac{\mathd \nu}{\mathd \mu} \right)} - 1 \right] f \right)\\
    & \geqslant & \mu_x^{\tmmathbf{\sigma}^{\star}} \left( \frac{\frac{\mathd
    \nu}{\mathd \mu}}{\mu_x^{\tmmathbf{\sigma}^{\star}} \left( \frac{\mathd
    \nu}{\mathd \mu} \right)} - 1 \right) \mu_x^{\tmmathbf{\sigma}^{\star}}
    \left( f \right)\\
    & = & 0
  \end{eqnarray*}
  where in the second line we use the FKG inequality for a single spin
  (Corollary \ref{cor:FKG}).
\end{proof}

\begin{proof}
  (Proposition \ref{prop:censoring}). Lemmas \ref{lem:PW} and \ref{lem:mx:dec}
  together with formula (\ref{Pt}) imply that $\nu P_{A ; t}$ and $\nu P_{B ;
  t}$ are absolutely continuous with respect to $\mu$, that their
  Radon-Nikodym derivative is increasing and also that $\nu P_{B ; t}
  \prec \nu P_{A ; t}$. Now we prove the inequalities. For the
  variance, we remark that

  \begin{eqnarray*}
    \tmop{Var} \left( \frac{\mathd \left( \nu P_{B ; t} \right)}{\mathd \mu}
    \right) & = & \mu \left( \left( \frac{\mathd \left( \nu P_{B ; t}
    \right)}{\mathd \mu} \right)^2 \right) - 1 = \nu P_{B ; t} \left(
    \frac{\mathd \left( \nu P_{B ; t} \right)}{\mathd \mu} \right) - 1\\
    & \leqslant & \nu P_{A ; t} \left( \frac{\mathd \left( \nu P_{B ; t}
    \right)}{\mathd \mu} \right) - 1 = \tmop{Cov} \left( \frac{\mathd \left(
    \nu P_{A ; t} \right)}{\mathd \mu}, \frac{\mathd \left( \nu P_{B ; t}
    \right)}{\mathd \mu} \right)\\
    & \leqslant & \tmop{Var} \left( \frac{\mathd \left( \nu P_{A ; t}
    \right)}{\mathd \mu} \right)^{1 / 2} \tmop{Var} \left( \frac{\mathd \left(
    \nu P_{B ; t} \right)}{\mathd \mu} \right)^{1 / 2}
  \end{eqnarray*}
  which proves (\ref{cens:var}). For the entropy we recall that
  \begin{eqnarray*}
    \tmop{Ent} \left( f \right) & = & \sup \left\{ \mu \left( fg \right) : g
    \text{ with } \mu \left( e^g \right) = 1 \right\}
  \end{eqnarray*}
  therefore
  \begin{eqnarray*}
    \tmop{Ent} \left( \frac{\mathd \left( \nu P_{B ; t} \right)}{\mathd \mu}
    \right) & = & \left( \nu P_{B ; t} \right) \left( \log \frac{\mathd \left(
    \nu P_{B ; t} \right)}{\mathd \mu} \right)\\
    & \leqslant & \left( \nu P_{A ; t} \right) \left( \log \frac{\mathd
    \left( \nu P_{B ; t} \right)}{\mathd \mu} \right) = \mu \left(
    \frac{\mathd \left( \nu P_{A ; t} \right)}{\mathd \mu} \log \frac{\mathd
    \left( \nu P_{B ; t} \right)}{\mathd \mu} \right)\\
    & \leqslant & \tmop{Ent} \left( \frac{\mathd \left( \nu P_{A ; t}
    \right)}{\mathd \mu} \right) .
  \end{eqnarray*}
  Finally we recall for completeness the proof of (\ref{cens:tv}):
  \begin{eqnarray*}
    \left\| \nu P_{B ; t} - \mu \right\|_{\tmop{TV}} & = & \nu P_{B ; t}
    \left( \frac{\mathd \left( \nu P_{B ; t} \right)}{\mathd \mu} \geqslant 1
    \right) - \mu \left( \frac{\mathd \left( \nu P_{B ; t} \right)}{\mathd
    \mu} \geqslant 1 \right)\\
    & \leqslant & \nu P_{A ; t} \left( \frac{\mathd \left( \nu P_{B ; t}
    \right)}{\mathd \mu} \geqslant 1 \right) - \mu \left( \frac{\mathd \left(
    \nu P_{B ; t} \right)}{\mathd \mu} \geqslant 1 \right)\\
    & \leqslant & \left\| \nu P_{A ; t} - \mu \right\|_{\tmop{TV}} .
  \end{eqnarray*}
\end{proof}

\section{Ising model on regular trees}

\subsection{Notation and main results}

In the following we specialize to the case where the underlying graph is
$\mathbbm{T}_l^b$, the rooted regular tree with $b \geqslant 2$ children at
each node except the leaves, and depth $l \geqslant 0$. We always denote by
$r$ the root of the tree. When $z \in \mathbbm{T}_l^b$ we denote by $\left| z
\right|$ the depth of $z$, that is the graph distance to the root. We say that
$z$ is a leaf if $\left| z \right| = l$. Given $\tmmathbf{\tau} \in
\Sigma^{\mathbbm{T}_{\infty}^b}$ and $A$ is a subset of
$\mathbbm{T}_{\infty}^b$ we define $\mu^{\tmmathbf{\tau}}_A$ as the Gibbs
measure on $\Sigma^A$ with boundary condition $\tmmathbf{\tau}$ acting as an
additional local field at $z \in A$ given by $\sum_{y \nin A, y \sim z}
\tmmathbf{\tau} \left( y \right)$, where $y \sim z$ means that $y, z$ are
neighbors. When $\tmmathbf{\tau}$ is identically equal to plus/minus we will
simply write $\mu^{\pm}_A$. When $A =\mathbbm{T}_l^b$, we will write
$\mu_l^{\tmmathbf{\tau}}$.

\begin{definition}
  We say that the parameters $\beta, \lambda \geqslant 0$, $\tmmathbf{h} \in
  L^1 \left( \left[ 0, \beta \right] \right)$ are inside the uniqueness region
  if the boundary condition on the leaves of $\mathbbm{T}_l^b$ does not change
  the marginal distribution at the root in the limit $l \rightarrow + \infty$.
  Because of stochastic domination, this is equivalent to
  \begin{eqnarray}
    \lim_{l \rightarrow + \infty} \left\| \mu_l^+ \left( \tmmathbf{\sigma}_r
    \in \cdummy \right) - \mu_l^- \left( \tmmathbf{\sigma}_r \in \cdummy
    \right) \right\|_{\tmop{TV}} & = & 0.  \label{hyp:bc:ni}
  \end{eqnarray}
\end{definition}

Next, following {\cite{MSW04}} we define the exponents $\gamma$ and $\kappa$
as follows:

\begin{definition}
  Given $\beta, \lambda \geqslant 0$, $\tmmathbf{h} \in L^1 \left( \left[ 0,
  \beta \right] \right)$ we let
  \begin{eqnarray}
    \gamma & = & \sup_l \max \left\| \mu_A^{\tmmathbf{\eta}} \left(
    \tmmathbf{\sigma}_z \in \cdummy \right) - \mu_A^{\bar{\tmmathbf{\eta}}}
    \left( \tmmathbf{\sigma}_z \in \cdummy \right) \right\|_{\tmop{TV}} 
    \label{gamma}
  \end{eqnarray}
  where the maximum is taken over all subsets $A \subset \mathbbm{T}_l^b$, all
  vertices $y$ on the external boundary of $A$, all boundary configurations
  $\tmmathbf{\eta}, \bar{\tmmathbf{\eta}} \in \Sigma^{\mathbbm{T}_{\infty}^b}$
  that differ only at $y$ and all neighbors $z \in A$ of $y$.
  
  Given a boundary condition $\tmmathbf{\tau} \in
  \Sigma^{\mathbbm{T}_{\infty}^b}$, we let $\kappa \left( \tmmathbf{\tau}
  \right)$ be the infimum of $\kappa \geqslant 0$ such that there exists $l_0
  \geqslant 0$ such that, for any regular subtree $T \subset
  \mathbbm{T}_{\infty}^b$ with root $x$ and uniform depth, and for any $z \in
  T$ with $\left| z - x \right| \geqslant l_0$,
  \begin{eqnarray}
    \mu_T^{\tmmathbf{\tau}} \left( \tmmathbf{\sigma}_z \bullet
    1|\tmmathbf{\sigma}_x = + \right) - \mu_T^{\tmmathbf{\tau}} \left(
    \tmmathbf{\sigma}_z \bullet 1|\tmmathbf{\sigma}_x = - \right) & \leqslant
    & \kappa^{\left| z \right|} .  \label{kappa}
  \end{eqnarray}
\end{definition}

From Proposition \ref{prop:gamma} we know already that $\gamma < 1$. It
follows from a recursive coupling argument that $\kappa \leqslant \gamma$. In
complete analogy with the classical case, we prove the following results.

\begin{theorem}
  \label{thm:decay}(Decay of correlations) Let $\beta, \lambda \geqslant 0$,
  $\tmmathbf{h} \in L^1 \left( \left[ 0, \beta \right] \right)$.
  \begin{enumerate}
    \item $\kappa \left( + \right) \leqslant 1 / b$.
    
    \item If $\beta, \lambda, \tmmathbf{h}$ are in the uniqueness region, then
    for arbitrary $\tmmathbf{\tau} \in \Sigma^{\mathbbm{T}_{\infty}^b}$,
    $\kappa \left( \tmmathbf{\tau} \right) \leqslant 1 / b$.
  \end{enumerate}
\end{theorem}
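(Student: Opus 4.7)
The plan is to derive a cavity-type recursion for the single-site marginal at a vertex given the boundary condition, and to bound the operator norm of its linearisation, at the plus fixed point (part~1) or at the unique fixed point (part~2), by $1/b$.

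For any rooted subtree $T \subset \mathbbm{T}_\infty^b$ with root $v$ and children subtrees $T_1, \ldots, T_b$, I would introduce the cavity marginal $\eta_T^{\tmmathbf{\tau}}(\mathd \tmmathbf{\sigma})$, defined as the marginal of $\mu_T^{\tmmathbf{\tau}}$ at $v$. The DLR property gives
\begin{eqnarray*}
\eta_T^{\tmmathbf{\tau}}(\mathd \tmmathbf{\sigma}) & \propto & e^{\tmmathbf{h}_v \bullet \tmmathbf{\sigma}}\, \varphi_{[0,\beta],\lambda}(\mathd \tmmathbf{\sigma}) \prod_{i=1}^b K_{T_i}^{\tmmathbf{\tau}}(\tmmathbf{\sigma}),
\end{eqnarray*}
with single-edge transfer $K_{T_i}^{\tmmathbf{\tau}}(\tmmathbf{\sigma}) = \int e^{\tmmathbf{\sigma} \bullet \tmmathbf{\sigma}'} \eta_{T_i}^{\tmmathbf{\tau}}(\mathd \tmmathbf{\sigma}')$. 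For the homogeneous regular tree with plus boundary, this becomes a nonlinear fixed-point equation $\eta = F(\eta)$ on probability measures on $\Sigma_{[0,\beta]}$; by stochastic monotonicity (Corollary~\ref{cor:mu:mono}), the iterates $\eta_l^+ = F^l(\delta_+)$ decrease stochastically to a maximal fixed point $\eta^{+,*}$.

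Second, fix a descendant $z$ of $x$ along the path $x = v_0, v_1, \ldots, v_n = z$. Iterated conditioning on $\tmmathbf{\sigma}_{v_k}$ and the tree Markov property yield
\begin{eqnarray*}
\mu_T^{\tmmathbf{\tau}}(f(\tmmathbf{\sigma}_z) \mid \tmmathbf{\sigma}_x) & = & (L_1 L_2 \cdots L_n f)(\tmmathbf{\sigma}_x),
\end{eqnarray*}
where $L_k g(\tmmathbf{\xi}) = \mu_{T_{v_k}}^{\tmmathbf{\tau}, \tmmathbf{\xi}}(g(\tmmathbf{\sigma}_{v_k}))$ is the root marginal of the subtree $T_{v_k}$ carrying an additional parent field $\tmmathbf{\xi}$. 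The key algebraic observation is that the derivative $DF$ of the cavity map at its fixed point decomposes into $b$ symmetric tensor copies of $L$, because $F$ multiplies the $b$ identical factors $K_{T_i}$; in the appropriate normalisation, $\|DF\|_{\mathrm{op}} = b \cdot \|L\|_{\mathrm{op}}$. Applying this with $f(\tmmathbf{\sigma}) = \tmmathbf{\sigma} \bullet 1$ (an increasing function), the bound $\kappa \leq 1/b$ in (\ref{kappa}) is equivalent to proving $\|DF\|_{\mathrm{op}} \leq 1$ on the cone of stochastically positive perturbations.

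Part~(1) then follows because $\eta^{+,*}$ is the \emph{maximal} fixed point of the monotone map $F$ and the iterates $F^l(\delta_+)\downarrow\eta^{+,*}$; this forces the Perron-type bound $\|DF|_{\eta^{+,*}}\|\leq 1$ on the positive cone, and therefore $\|L\|\leq 1/b$ once the off-path subtrees $T_{v_k}$ are deep enough. Part~(2) is obtained analogously: under the uniqueness assumption (\ref{hyp:bc:ni}) the map $F$ has a globally attracting unique fixed point, so by standard stability $\|DF\|<1$ there; uniform convergence $K_{T_{v_k}}^{\tmmathbf{\tau}}\to K^*$ as the depth of the off-path subtree grows transfers this strict contraction to each $L_k$ with $k\geq l_0$, uniformly in $\tmmathbf{\tau}$. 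The main obstacle, as anticipated in the introduction, is that $F$ is a nonlinear map on an infinite-dimensional space of signed measures on $\Sigma_{[0,\beta]}$: one must choose a Banach norm in which (i) the identification $DF = b \cdot L$ is valid, (ii) the monotonicity-driven bound $\|DF\|\leq 1$ on the positive cone implies (\ref{kappa}), and (iii) the limit $l\to\infty$ is uniform in the remaining parameters. This is the step where the quantum analysis departs from the classical case of \cite{MSW04}, in which $F$ is a scalar recursion on $\mathbf{R}$.
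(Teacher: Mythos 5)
Your high-level picture — a cavity recursion $F$ on single-site marginals, linearisation at the plus fixed point, and a factor $b$ coming from the $b$ identical children — is the one the paper uses, but the proposal leaves the two hardest steps unjustified and, as written, misstates what has to be proved.

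First, you claim that $\kappa \leqslant 1/b$ ``is equivalent to proving $\|DF\|_{\mathrm{op}} \leqslant 1$ on the cone of stochastically positive perturbations.'' That is not the right target. What is needed is a bound on the decay of the \emph{$k$-fold product} of the transfer operators along the path from $x$ to $z$, and these operators are only asymptotically equal to the derivative $D$ of the one-coordinate cavity map (and only once the off-path subtrees are deep enough, i.e.\ once $\min(i,\,l-i)$ is large). A bound $\|D\|\leqslant 1/b$ in some norm would indeed suffice, but the paper does not obtain one; it establishes the strictly weaker spectral-radius-type statement $\|D^k(\rho)\|_{\infty,\mathcal{X}} \leqslant C b^{-k}\|\rho\|_{1,\mathcal{X}}$ (Theorem~\ref{thm:Dkrho}), mixing two different norms, and then shows that the path product is close to $D^k$ uniformly in $l$. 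So you would still have to explain how to convert a spectral-radius bound into the $\kappa$ estimate, which is genuine work.

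Second, the step ``maximality of the fixed point forces a Perron-type bound $\|DF\| \leqslant 1$'' is asserted, not proved, and in fact is false as an operator-norm statement; monotone convergence of the iterates gives at best a spectral-radius constraint on the positive cone. The paper avoids Krein--Rutman altogether (see the remark after Theorem~\ref{thm:Dkrho}); instead it exploits the \emph{asymptotic direction}: since $\nu_n - \nu_\infty$ is stochastically positive and $\nu_{n+1}-\nu_\infty = bD(\nu_n-\nu_\infty)+r_n$ with $\|r_n\|$ quadratically small, a two-sided comparison (the estimates (\ref{Dtv}) and (\ref{Df}), which require the strict positivity from Theorem~\ref{thm:mono} and the density bound (\ref{dnu:bounded})) domninates any normalised stochastically positive $\rho$ by a multiple of $(\nu_n-\nu_\infty)/\|\nu_n-\nu_\infty\|_{\infty,\mathcal{X}}$; iterating $D$ and choosing a subsequence of $n$ then yields $\|D^k\|\leqslant Cb^{-k}$. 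None of this appears in the proposal, and it is precisely where the infinite-dimensionality of the quantum cavity equation bites. The same objection applies to your part~(2): ``globally attracting fixed point, so by standard stability $\|DF\|<1$'' is again an unjustified leap from dynamical attraction to a quantitative operator bound; the paper instead shows that uniqueness forces all the path operators into a neighbourhood of $\nu_\infty$ (via the $\Gamma$--Lipschitz bound of Proposition~\ref{prop:gamma}), after which the same $D^k$ estimate applies uniformly in $\tmmathbf{\tau}$.

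In short, the skeleton is right but the central lemma — the $b^{-k}$ decay of iterates of the linearised cavity map, obtained through the asymptotic-direction comparison on the stochastically positive cone, plus the uniform approximation of the path operators by $D$ — is exactly what is missing, and without it the proof does not close.
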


\begin{theorem}
  \label{thm:mixing}(Fast mixing) Let $\beta, \lambda \geqslant 0$,
  $\tmmathbf{h} \in L^1 \left( \left[ 0, \beta \right] \right)$. Fix
  $\tmmathbf{\tau} \in \Sigma^{\mathbbm{T}_{\infty}^b}$ and assume that
  $\kappa \left( \tmmathbf{\tau} \right)$ is such that $\kappa \left(
  \tmmathbf{\tau} \right) \gamma b < 1$. Then the following holds.
  \begin{enumerate}
    \item The spectral gap of the Glauber dynamics on $\mathbbm{T}_l^b$ with
    boundary condition $\tmmathbf{\tau}$ is greater than a positive constant
    which does not depend on $l \geqslant 0$.
    
    \item The mixing time corresponding to the above dynamics is at most $Cl$
    where $C < \infty$ does not depend on $l \geqslant 0$.
  \end{enumerate}
\end{theorem}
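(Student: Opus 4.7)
The plan is to follow the approach of \cite{MSW04}, combining a recursive block-dynamics decomposition for the spectral gap with a censoring argument (Proposition \ref{prop:censoring}) for the mixing time, the latter replacing the logarithmic Sobolev strategy which is unavailable in the quantum setting.

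For part (1), I proceed by induction on $l$. Partition $\mathbbm{T}_l^b$ into the singleton $B_0=\{r\}$ and the $b$ subtrees $B_1,\dots,B_b$ rooted at the children of $r$, and introduce the block dynamics that resamples each $B_i$ from its equilibrium conditional distribution given the exterior. A standard variance decomposition yields
\[
\mathrm{gap}(\mathcal{L})\;\geq\;\mathrm{gap}(\mathcal{L}^{\mathrm{block}})\cdot\min_{i\geq 1}\mathrm{gap}(B_i),
\]
and since each $B_i$ is isometric to $\mathbbm{T}_{l-1}^b$ with some effective boundary condition (on whose depth the relevant $\kappa(\cdummy)\gamma b<1$ condition is preserved), the induction hypothesis controls $\min_i\mathrm{gap}(B_i)$. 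The heart of the proof is therefore a uniform lower bound on $\mathrm{gap}(\mathcal{L}^{\mathrm{block}})$. To obtain it I would use the quantum cavity equation, which expresses the conditional marginal at $r$ given the $b$ subtree configurations as a fixed point of a map acting on the "messages" sent by the subtrees. The linearisation of this map at the plus fixed point is, by the very definitions (\ref{gamma}) and (\ref{kappa}), controlled in operator norm by $\kappa(\tau)\gamma b$: the factor $b$ counts the number of children, $\gamma$ bounds the influence of a child's marginal on the root's marginal (cf.\ Proposition \ref{prop:gamma}), and $\kappa(\tau)$ bounds the decay of influence across each subtree. Under the assumption $\kappa(\tau)\gamma b<1$, iteration of the cavity map is a strict contraction on the relevant Banach space of signed measures; combined with FKG (Corollary \ref{cor:FKG}) to relate this contraction to a Poincaré-type inequality for root resampling, this gives the desired uniform bound on $\mathrm{gap}(\mathcal{L}^{\mathrm{block}})$.

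For part (2), I would reduce to the extremal starts $\tau=\pm$ by monotonicity of the dynamics, and then apply the extended censoring inequality. Define a censoring schedule of $l$ epochs of equal length $T_0$ (with $T_0$ a large constant chosen in terms of the uniform spectral gap from part (1)): in the $k$-th epoch only the vertices at depth $\geq l-k$ are updated. By Proposition \ref{prop:censoring} the variance (and hence the total variation distance, via (\ref{cens:tv})) of the censored evolution started from the all-plus configuration dominates that of the true Glauber dynamics. Within the $k$-th epoch the dynamics decomposes into independent Glauber evolutions on subtrees of depth $k$, each of which has spectral gap $\geq g_\ast>0$ by part (1), so that by (\ref{var:gap}) the $L^2$ distance to the conditional equilibrium at that epoch contracts by a factor $e^{-2g_\ast T_0}$. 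Choosing $T_0$ large so that this factor compensates for the change in conditional equilibrium from one epoch to the next (quantified by $\kappa(\tau)\gamma b<1$ via the decay of correlations), an epoch-by-epoch telescoping shows that after $l$ epochs the total variation is $\leq e^{-1}$, yielding a mixing time $Cl$.

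The main obstacle, and the only substantive departure from \cite{MSW04}, is the operator-norm estimate on the linearised cavity map at the plus fixed point. In the classical case this derivative is a scalar, so the estimate $\kappa\gamma b<1$ is a one-dimensional computation; here marginals are probability measures on the infinite-dimensional Skorohod space $\Sigma_{[0,\beta]}$, and the derivative is a genuine linear operator on a space of signed measures. Identifying an appropriate norm and weight on that space, in which the definitions (\ref{gamma}) of $\gamma$ and (\ref{kappa}) of $\kappa(\tau)$ actually translate into the claimed operator-norm bound, will require the detailed analysis of the single-site measure developed in Section 3 together with a careful decomposition of the cavity operator into a "propagation along the subtree" piece (governed by $\kappa$) and an "attachment at the root" piece (governed by $\gamma$).
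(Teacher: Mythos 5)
Your proposal for part (1) misidentifies where the cavity equation enters the argument. In the paper, the cavity operator $D$ and its linearisation (Theorem \ref{thm:Dkrho}, spectral radius $\leqslant 1/b$) are used exclusively to \emph{establish} the bound $\kappa(\tau)\leqslant 1/b$ in Theorem \ref{thm:decay}; they play no role in the proof of Theorem \ref{thm:mixing}, which simply \emph{assumes} $\kappa(\tau)\gamma b<1$. In particular your assertion that ``the linearisation of the cavity map at the plus fixed point is controlled in operator norm by $\kappa(\tau)\gamma b$'' is not a fact the paper uses or proves, and it does not follow from the definitions (\ref{gamma}) and (\ref{kappa}): $\gamma$ and $\kappa$ are correlation-decay exponents, not operator-norm bounds for $D$. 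What the paper actually does is verify the MSW04 variance-contraction criterion, inequality (\ref{var:mix}), by a direct argument: write the conditional expectation as integration against a bounded kernel $K(\tmmathbf{\sigma}_r,\cdot)$, bound its second moment by a total-variation distance via Cauchy--Schwarz and boundedness of $K$, and control that total-variation distance by a difference of expected magnetisations. The decisive quantitative step is inequality (\ref{eq:diffmag}): the $b^k$ descendants at depth $k$ each contribute $\leqslant ((1+\varepsilon)\kappa)^k$ by the definition of $\kappa$, and a telescoping coupling along the tree supplies an extra $\gamma^{k-1}$ factor, giving $((1+\varepsilon)\kappa\gamma b)^k$. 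Your block-decomposition-plus-cavity route would therefore have to prove an operator-norm bound that the paper does not, and it is not clear how FKG would convert a cavity contraction into a Poincar\'e inequality for the block dynamics; this is a real gap.

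Your outline for part (2) is in the right spirit: the paper also runs a bottom-up censoring schedule, proving the recursion $T_x\leqslant\max_{y\text{ child of }x}T_y+t_0$ (equation (\ref{eq:Txt0})) and concluding $T_r\leqslant lt_0$. But your sketch is missing two essential ingredients. First, the $L^2$ contraction (\ref{var:gap}) only helps if you have a finite, $l$-uniform bound on the variance of the censored density at the start of each epoch; the paper obtains this by exploiting the product structure of $\bar h_r^{\tmmathbf{\eta}}(t)$ over the subtrees of the children, which yields the bound $2^b/\mu(\tmmathbf{\sigma}_r=+)$. Without this step, ``the $L^2$ distance contracts by $e^{-2g_* T_0}$'' is vacuous because the initial $L^2$ distance could be unbounded in $l$. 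Second, you state you would ``reduce to the extremal starts $\pm$ by monotonicity,'' but in the quantum setting the censoring proposition applies only to initial laws whose density is increasing (hence the $+$ and $-$ starts), and extending the $T_r\leqslant Cl$ bound to an arbitrary initial configuration $\tmmathbf{\xi}$ is a non-trivial final step: the paper constructs a monotone coupling of $P_t^{\tmmathbf{\xi}}$ and $P_t^+$, uses a Markov-inequality union bound over the $b^l$ sites to show the two copies are $\varepsilon$-close in $L^1$ at time $T_r+cl/2$, and then runs a local optimal-coupling argument using Proposition \ref{prop:gamma} for the remaining $cl/2$ time. This coupling step is absent from your sketch.
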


\subsection{Conditional spin distributions}

Working with trees leads to major simplifications in the structure of the
Gibbs measure. In particular, given any $z \in \mathbbm{T}$, the restriction
of a Gibbs measure $\mu$ onto the subtrees of $z$, given
$\tmmathbf{\sigma}_z$, is a product measure. As a consequence, $\mu$ is fully
characterized by
\begin{enumerate}
  \item the marginal distribution $\mu \left( \tmmathbf{\sigma}_r \in .
  \right)$ of the spin at the root $r$,
  
  \item and the conditional marginal distributions $\mu \left(
  \tmmathbf{\sigma}_z \in . | \tmmathbf{\sigma}_{z^-} \right)$, $z \in
  \mathbbm{T} \setminus \left\{ r \right\}$, where $z^-$ denotes the ancestor
  of $z$.
\end{enumerate}
\begin{remark}
  The marginal distribution $\mu \left( \tmmathbf{\sigma}_r \in . \right)$ can
  be viewed as a conditional spin distribution if we add a ghost ancestor $0$
  to the root $r$ with constant spin $\tmmathbf{\sigma}_0 = 0$.
\end{remark}

For latter purposes we need that (conditional) measures form a vector space.
So we introduce the set $\mathcal{M}$ of finite signed measures on $(\Sigma,
\mathcal{B}(\Sigma))$, where $\mathcal{B} (\Sigma)$ is the Borel
$\sigma$-algebra associated to the Skorohod topology on $\Sigma$. Signed
measures have a Hahn-Jordan decomposition into their positive and negative
parts. This means that, for any $\mu \in \mathcal{M}$, there are two disjoint
sets $P$ and $N$ (unique up to $\mu$-negligible sets) such that $\mu$ gives
non-negative weight to every Borel subset of $P$, and non-positive weight to
every Borel subset of $N$. Let $\mu^+ = \mu (. \cap P)$ and $\mu^- = - \mu (.
\cap N)$. Then $\mu = \mu^+ - \mu^-$. We also consider $\left| \mu \right| =
\mu^+ + \mu^-$ , a positive measure, and recall that
\begin{eqnarray}
  \| \mu \|_{\tmop{TV}} & = & \frac{\left| \mu \right| (\Sigma)}{2} =
  \frac{1}{2} \sup_{f : |f| \leqslant 1} \mu (f) = \frac{1}{2} \sup_{A \subset
  \Sigma} \left( \mu \left( A \right) - \mu \left( A^c \right) \right) \label{TV}
\end{eqnarray}
defines a norm on $\mathcal{M}$, that turns $\mathcal{M}$ into a Banach space
as $\Sigma$ endowed with the Skorohod topology is a Polish space.

For convenience, we call $\mathcal{M}_0$ the subset of $\mathcal{M}$ made of
all finite signed measures $\mu$ with $\mu (\Sigma) = 0$. Both $\mathcal{M}$
and $\mathcal{M}_0$ are vector spaces. We also call $\mathcal{M}_+$ the set of
finite positive measures and $\mathcal{M}_{+, 1}$ the set of probability
measures on $\Sigma$.

Finally, we call $\mathcal{X}$ the set of functions from $\Sigma$ to
$\mathcal{M}$, $\mathcal{X}_0$ the set of functions from $\Sigma$ to
$\mathcal{M}_0$, and similarly $\mathcal{X}_{+, 1}$ for the set of functions
from $\Sigma$ to $\mathcal{M}_{+, 1}$. In particular, any marginal
(resp.~conditional marginal) distribution belongs to $\mathcal{M}_{+, 1}$
(resp.~$\mathcal{X}_{+, 1}$), and any difference of marginal
(resp.~conditional marginal) distributions belongs to $\mathcal{M}_0$
(resp.~$\mathcal{X}_0$). For notation consistance, for any $\rho \in
\mathcal{X}$ and any $\tmmathbf{\sigma} \in \Sigma$, we denote by
$\rho^{\tmmathbf{\sigma}} \left( . \right)$ the corresponding signed measure
on $\Sigma$. We will consider later on the problem of defining a norm on
$\mathcal{X}$ (see (\ref{Xinf}) and (\ref{X1})).

\subsection{The resampling operator and the cavity equation}

Given $\tmmathbf{\eta} \in L^1 \left( \left[ 0, \beta \right] \right)$, $\rho
\in \mathcal{M}$ and $\rho_1, \ldots, \rho_b \in \mathcal{X}$, we define
$R^{\tmmathbf{\eta}}_{\rho_1, \ldots, \rho_b} (\rho) \in \mathcal{M}$ by
\begin{eqnarray}
  R^{\tmmathbf{\eta}}_{\rho_1, \ldots, \rho_b} (\rho) & = & \int \rho (\mathd
  \tmmathbf{\sigma}_0) \rho_1^{\tmmathbf{\sigma}_0} (\mathd
  \tmmathbf{\sigma}_1) \cdots \rho_b^{\tmmathbf{\sigma}_0} (\mathd
  \tmmathbf{\sigma}_b) \mu_{\beta,
  \tmmathbf{h}+\tmmathbf{\eta}+\tmmathbf{\sigma}_1 + \cdots
  +\tmmathbf{\sigma}_b, \lambda} .  \label{R}
\end{eqnarray}
We give an interpretation to $R^{\tmmathbf{\eta}}_{\rho_1, \ldots, \rho_b}
(\rho)$ in an important special case.
\begin{enumerate}
  \item Consider some point of the tree $\mathbbm{T}$ with $b + 1$ neighbors.
  Conventionally we denote its spin by $\tmmathbf{\sigma}_0$. We call
  $\tmmathbf{\eta}$ the spin of its ancestor and $\tmmathbf{\sigma}_1, \ldots,
  \tmmathbf{\sigma}_b$ the spins of its children.
  
  \item Sample $\tmmathbf{\sigma}_0$ from $\rho$, the conditional distribution
  of $\tmmathbf{\sigma}_0$ given the neighbor spin $\tmmathbf{\eta}$.
  
  \item Sample each $\tmmathbf{\sigma}_i$ independently, according to
  $\rho_i^{\tmmathbf{\sigma}_0}$, the conditional distribution of
  $\tmmathbf{\sigma}_i$ given the parent spin $\tmmathbf{\sigma}_0$.
  
  \item Sample again $\tmmathbf{\sigma}_0$ according to the single site
  distribution with field $\tmmathbf{h}+\tmmathbf{\eta}+\tmmathbf{\sigma}_1 +
  \cdots +\tmmathbf{\sigma}_b$.
\end{enumerate}
If $\rho$ and the $\rho_i$ correspond to the conditional marginal
distributions of the Gibbs measure with field $\tmmathbf{h}$, then $\rho$ is
stable under the resampling operator according to the DLR equation. In other
words, $\rho = R^{\tmmathbf{\eta}}_{\rho_1, \ldots, \rho_b} (\rho)$ when
$\rho$ is the conditional distribution of $\tmmathbf{\sigma}_0$ given that the
spin above $0$ is $\tmmathbf{\eta}$, and the $\rho_i^{\tmmathbf{\sigma}_0}$
are the conditional distributions of $\tmmathbf{\sigma}_i$ given
$\tmmathbf{\sigma}_0$. More generally we have:

\begin{definition}
  Given $\rho_1, \ldots, \rho_b \in \mathcal{X}$, $\tmmathbf{\eta} \in L^1
  \left( \left[ 0, \beta \right] \right)$, we say that $\rho \in \mathcal{M}$
  satisfies the \tmtextit{cavity equation} with parameters $\rho_1, \ldots,
  \rho_b$ and $\tmmathbf{\eta}$ if
  \begin{eqnarray}
    \rho & = & R^{\tmmathbf{\eta}}_{\rho_1, \ldots, \rho_b} (\rho) 
    \label{cav}
  \end{eqnarray}
\end{definition}

\subsubsection{The resampling operator is a contraction}

From the definition (\ref{R}) it is clear that the applications $\rho \in
\mathcal{M} \mapsto R^{\tmmathbf{\eta}}_{\rho_1, \ldots, \rho_b} (\rho) \in
\mathcal{M}$ and $\rho_i \in \mathcal{X} \mapsto R^{\tmmathbf{\eta}}_{\rho_1,
\ldots, \rho_b} (\rho) \in \mathcal{M}$ are linear. Now we consider their
operator norm.

\begin{proposition}
  \label{prop:Scont}Let $\gamma$ be the constant in
  Proposition~\ref{prop:gamma} corresponding to the value of $M$ given
  by $M= \max \left( \lambda,
  \|\tmmathbf{h}\|_1 + \beta (b + 1) \right)$. Then,
  \begin{enumerate}
    \item For any $\rho_1, \ldots, \rho_b \in \mathcal{X}_{1, +}$, any
    $\tmmathbf{\eta} \in \Sigma$, the application
    $R^{\tmmathbf{\eta}}_{\rho_1, \ldots, \rho_b}$ is a contraction on
    $\mathcal{M}$, and a $\gamma$-contraction (i.e.~its operator norm is at
    most $\gamma$) when restricted to $\mathcal{M}_0$.
    
    \item For any $\rho_1, \ldots, \rho_b \in \mathcal{X}_{1, +}$, any
    $\tmmathbf{\eta} \in \Sigma$, the application $I -
    R^{\tmmathbf{\eta}}_{\rho_1, \ldots, \rho_b}$ is invertible on
    $\mathcal{M}_0$ and its inverse
    \begin{eqnarray}
      \left( \text{$I - R^{\tmmathbf{\eta}}_{\rho_1, \ldots, \rho_b}$}
      \right)^{- 1} & = & \sum_{k \geqslant 0} \left(
      R^{\tmmathbf{\eta}}_{\rho_1, \ldots, \rho_b} \right)^k  \label{R:inv}
    \end{eqnarray}
    has operator norm not larger than $\left( 1 - \gamma \right)^{- 1}$.
    
    \item For any $\rho \in \mathcal{M}$, $\rho_1, \ldots, \rho_b \in
    \mathcal{X}$,
    \begin{eqnarray}
      \left\| R^{\tmmathbf{\eta}}_{\rho_1, \ldots, \rho_b} (\rho)
      \right\|_{\tmop{TV}} & \leqslant & \int \left| \rho \right| (\mathd
      \sigma_0) \prod_{i = 1}^b \left\| \rho_i^{\sigma_0} \right\|_{\tmop{TV}}
      .  \label{R:norm}
    \end{eqnarray}
  \end{enumerate}
\end{proposition}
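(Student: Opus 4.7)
The key idea is to rewrite
\begin{eqnarray*}
R^{\tmmathbf{\eta}}_{\rho_1,\ldots,\rho_b}(\rho) & = & \int \rho(\mathd \sigma_0)\, K(\sigma_0,\cdummy),\\
K(\sigma_0,\cdummy) & = & \int \rho_1^{\sigma_0}(\mathd\sigma_1)\cdots \rho_b^{\sigma_0}(\mathd\sigma_b)\; \mu_{\beta,\tmmathbf{h}+\tmmathbf{\eta}+\sigma_1+\cdots+\sigma_b,\lambda},
\end{eqnarray*}
and to exploit the fact that, when each $\rho_i \in \mathcal{X}_{+,1}$, $K(\sigma_0, \cdummy)$ is a probability kernel on $\Sigma$: it is a mixture of single-site Gibbs measures, each of which has longitudinal field whose $L^1$-norm is at most $\|\tmmathbf{h}\|_1 + (b+1)\beta \leqslant M$ (here I use $\|\tmmathbf{\sigma}_i\|_1 = \beta$ for every spin configuration). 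This is precisely the choice of $M$ in the statement, so Proposition \ref{prop:gamma} applies to every single-site measure appearing in the mixture.

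For part (1), the contraction on $\mathcal{M}$ follows immediately from the Hahn--Jordan decomposition $\rho = \rho^+ - \rho^-$, since each $\int \rho^{\pm}(\mathd\sigma_0)\, K(\sigma_0,\cdummy)$ is a positive measure of the same mass as $\rho^{\pm}$. For the stronger $\gamma$-contraction on $\mathcal{M}_0$, I would normalize to probability measures $\nu^{\pm} = \rho^{\pm}/\|\rho\|_{\tmop{TV}}$ and use an arbitrary coupling $\pi$ of $\nu^+$ and $\nu^-$ to write
\begin{eqnarray*}
R^{\tmmathbf{\eta}}_{\rho_1,\ldots,\rho_b}(\rho) & = & \|\rho\|_{\tmop{TV}}\int \left[K(\sigma_0,\cdummy) - K(\sigma_0',\cdummy)\right]\,\pi(\mathd\sigma_0, \mathd\sigma_0').
\end{eqnarray*}
The problem then reduces to the pointwise estimate $\|K(\sigma_0,\cdummy) - K(\sigma_0',\cdummy)\|_{\tmop{TV}} \leqslant \gamma$, which I would obtain by further coupling the internal samplings $(\sigma_1,\ldots,\sigma_b)\sim \prod_i \rho_i^{\sigma_0}$ and $(\sigma_1',\ldots,\sigma_b')\sim \prod_i \rho_i^{\sigma_0'}$ in any way and applying Proposition \ref{prop:gamma} realization-wise to the two single-site measures that appear; pushing this bound through the expectation and then through $\pi$ yields $\|R(\rho)\|_{\tmop{TV}}\leqslant \gamma\|\rho\|_{\tmop{TV}}$ after taking the supremum over Borel sets.

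Part (2) is then a standard consequence of (1): $\mathcal{M}_0$ is a closed subspace of the Banach space $(\mathcal{M},\|\cdummy\|_{\tmop{TV}})$ and hence itself Banach, and a bounded linear operator of norm at most $\gamma<1$ makes $I-R$ invertible via the Neumann series (\ref{R:inv}), with operator norm bounded by $\sum_{k\geqslant 0}\gamma^k = (1-\gamma)^{-1}$. For part (3) I would apply Hahn--Jordan now at the level of each $\rho_i^{\sigma_0} = (\rho_i^{\sigma_0})^+ - (\rho_i^{\sigma_0})^-$, expand the product of signed measures into its $2^b$ positive components, and use that $\mu_{\ldots}$ is a probability measure (so its value on any Borel set is at most $1$) to bound the total mass of each term by the product of the masses of its factors; the triangle inequality then collects the estimate in the form of (\ref{R:norm}). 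The main technical obstacle is really the $\gamma$-contraction step in part (1) — everything else is a fairly mechanical passage through Hahn--Jordan and Neumann series — and its resolution rests entirely on the observation that a mixture of single-site measures still inherits the uniform lower bound on $\mu(\{+\})$ that drives Proposition \ref{prop:gamma}.
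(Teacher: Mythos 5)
Your proof is correct and follows essentially the same route as the paper's: Hahn–Jordan decomposition and Proposition~\ref{prop:gamma} give the $\gamma$-contraction on $\mathcal{M}_0$, the Neumann series gives (2), and multilinearity plus boundedness of the single-site measure gives (3). The only difference is cosmetic — you phrase the $\gamma$-contraction via an explicit coupling of $\rho^{\pm}/\|\rho\|_{\mathrm{TV}}$ and a further coupling of the internal samplings, whereas the paper integrates directly against $\rho^{+}\otimes\rho^{-}/\rho^{+}(\Sigma)$ and the kernel $\varphi^{\tmmathbf{\eta},\tmmathbf{\sigma}_0}$; these are the same bound, differently packaged, both ultimately resting on the inherited lower bound for $\mu(\{+\})$ behind (\ref{mu:gamma}).
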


\begin{proof}
  
  \begin{enumerate}
    \item We write $\rho = \rho^+ - \rho^-$ the Hahn-Jordan decomposition of
    $\rho$ into positive, mutually singular measures. We have
    \begin{eqnarray*}
      \left\| R^{\tmmathbf{\eta}}_{\rho_1, \ldots, \rho_b} \left( \rho \right)
      \right\|_{\tmop{TV}} & \leqslant & \left\| R^{\tmmathbf{\eta}}_{\rho_1,
      \ldots, \rho_b} \left( \rho^+ \right) \right\|_{\tmop{TV}} + \left\|
      R^{\tmmathbf{\eta}}_{\rho_1, \ldots, \rho_b} \left( \rho^- \right)
      \right\|_{\tmop{TV}}
    \end{eqnarray*}
    But $R^{\tmmathbf{\eta}}_{\rho_1, \ldots, \rho_b} \left( \rho^+ \right)$
    is obviously a positive measure, and its mass is
    \begin{eqnarray*}
      R^{\tmmathbf{\eta}}_{\rho_1, \ldots, \rho_b} \left( \rho^+ \right)
      (\Sigma) & = & \int \rho^+ (\mathd \tmmathbf{\sigma}_0)
      \rho_1^{\tmmathbf{\sigma}_0} (\mathd \tmmathbf{\sigma}_1) \cdots
      \rho_b^{\tmmathbf{\sigma}_0} (\mathd \tmmathbf{\sigma}_b)\\
      & = & \rho^+ (\Sigma)
    \end{eqnarray*}
    therefore $\left\| R^{\tmmathbf{\eta}}_{\rho_1, \ldots, \rho_b} \left(
    \rho \right) \right\|_{\tmop{TV}} \leqslant \left\| \rho
    \right\|_{\tmop{TV}}$ . Now we assume that $\rho \in \mathcal{M}_0$, that
    is, $\rho^+ (\Sigma) = \rho^- (\Sigma)$. The same calculation as above
    shows that $R^{\tmmathbf{\eta}}_{\rho_1, \ldots, \rho_b} \left( \rho
    \right) \in \mathcal{M}_0$. Now, given $\tmmathbf{\sigma}_0,
    \tmmathbf{\eta} \in \Sigma$ we consider the probability measure
    \begin{eqnarray*}
      \varphi^{\tmmathbf{\eta}, \tmmathbf{\sigma}_0} & = & \int
      \rho_1^{\tmmathbf{\sigma}_0} (\mathd \tmmathbf{\sigma}_1) \cdots
      \rho_b^{\tmmathbf{\sigma}_0} (\mathd \tmmathbf{\sigma}_b) \mu_{\beta,
      \tmmathbf{h}+\tmmathbf{\eta}+\tmmathbf{\sigma}_1 + \cdots
      +\tmmathbf{\sigma}_b, \lambda} .
    \end{eqnarray*}
    It is immediate from (\ref{mu:gamma}) that
    \begin{eqnarray*}
      | \varphi^{\tmmathbf{\eta}, \tmmathbf{\sigma}_0} (A) -
      \varphi^{\tmmathbf{\eta}, \tmmathbf{\sigma}'_0} (A) | & \leqslant &
      \gamma \text{,} \,\,\,\, \forall \tmmathbf{\eta}, \tmmathbf{\sigma}_0,
      \tmmathbf{\sigma}_0' \in \Sigma, \forall A \in \mathcal{B} (\Sigma) .
    \end{eqnarray*}
    Consequently,
    \begin{gather*}
      R^{\tmmathbf{\eta}}_{\rho_1, \ldots, \rho_b} \left( \rho \right) (A) -
      R^{\tmmathbf{\eta}}_{\rho_1, \ldots, \rho_b} \left( \rho \right) (A^c) 
      \\=  \int \rho^+ (\mathd \tmmathbf{\sigma}_0) \left[
      \varphi^{\tmmathbf{\eta}, \tmmathbf{\sigma}_0} (A) -
      \varphi^{\tmmathbf{\eta}, \tmmathbf{\sigma}_0} (A^c) \right]
       - \int \rho^- (\mathd \tmmathbf{\sigma}_0) \left[
      \varphi^{\tmmathbf{\eta}, \tmmathbf{\sigma}_0} (A) -
      \varphi^{\tmmathbf{\eta}, \tmmathbf{\sigma}_0} (A^c) \right]\\
      =  2 \int \frac{\rho^+ (\mathd \tmmathbf{\sigma}_0) \rho^- (\mathd
      \tmmathbf{\sigma}_0')}{\rho^+ \left( \Sigma \right)} \left[
      \varphi^{\tmmathbf{\eta}, \tmmathbf{\sigma}_0} (A) -
      \varphi^{\tmmathbf{\eta}, \tmmathbf{\sigma}_0'} (A) \right]\\
       \leqslant  2 \gamma \| \rho \|_{\tmop{TV}}
    \end{gather*}
    since $\rho^+ (\Sigma) = \rho^- (\Sigma) = \| \rho \|_{\tmop{TV}}$. This
    shows that $R^{\tmmathbf{\eta}}_{\rho_1, \ldots, \rho_b}$ is a
    $\gamma$-contraction on $\mathcal{M}_0$.
    
    \item It is an immediate consequence of the first point that $\rho \in
    \mathcal{M}_0 \mapsto \rho - R^{\tmmathbf{\eta}}_{\rho_1, \ldots, \rho_b}
    \left( \rho \right) \in \mathcal{M}_0$ is invertible, with the given
    inverse. The computation of the operator norm of the inverse is immediate.
    
    \item This follows at once from the definition of total variation distance
    : \[\| \mu \|_{\tmop{TV}} = \sup_{f : |f| \leqslant 1} \mu (f) / 2.\]
  \end{enumerate}
\end{proof}

\subsubsection{Monotonicity of the resampling operator}

In this paragraph we examine the monotonicity properties of the resampling
operator. We recall that, when $\mu, \nu \in \mathcal{M}_{1, +}$, we say that
$\mu$ is stochastically larger than $\nu$ when, for all $f : \Sigma
\rightarrow {\mathbf R}$ increasing, $\mu \left( f \right) \geqslant \nu
\left( f \right)$. We generalize this notion by saying that $\rho \in
\mathcal{M}_0$ is {\tmem{stochastically positive}} when $\rho (f) \geqslant
0$, for all $f$ increasing, which we write $\rho \succ 0$. Note that this has
nothing to do with ``$\rho$ is a positive measure'', which itself is the same
as $\rho \in \mathcal{M}_+$.

\begin{remark}
  Let $\mu, \nu$ two probability measures and assume that $\mu$ is
  stochastically larger than $\nu$. Let $\rho = \mu - \nu$ and call
  $\rho^{\pm}$ the positive and negative parts of $\mu$. We claim that a
  coupling for the positive and negative parts of $\rho$ yields easily a
  monotone coupling of $\mu$ and $\nu$, which is optimal in the sense that it
  realizes total variation distance. In other words, there always exists an
  optimal and monotone coupling between two ordered probability measures. An
  explicit construction of this coupling is given below. By definition of the
  total variation distance,
  \begin{eqnarray*}
    \rho^+ (\Sigma) = \rho^- (\Sigma) & = & \| \mu - \nu \|_{\tmop{TV}} .
  \end{eqnarray*}
  Assume that $\mu \neq \nu$ and consider $\varphi$ a monotone coupling of the
  stochastically ordered probability measures $\| \mu - \nu \|_{\tmop{TV}}^{-
  1} \rho^{\pm}$. Note that $(\mu - \rho^+) = (\nu - \rho^-)$ is a positive
  measure with mass $1 - \| \mu - \nu \|_{\tmop{TV}}$. We call $\psi$ the law
  of $(\sigma, \sigma)$ when $\sigma \sim (\mu - \rho^+) / (1 -\| \mu - \nu
  \|_{\tmop{TV}})$. Then,
  \begin{eqnarray*}
    \| \mu - \nu \|_{\tmop{TV}} \varphi + \left( 1 -\| \mu - \nu
    \|_{\tmop{TV}} \right) \psi &  & 
  \end{eqnarray*}
  is a monotone and optimal coupling for $(\mu, \nu)$.
\end{remark}

We begin by observing that $R^{\tmmathbf{\eta}}_{\rho_1, \ldots, \rho_b}
\left( \rho \right)$ is increasing in all parameters $\rho \in \mathcal{M}_{+,
1}$, $\rho_1, \ldots, \rho_b \in \mathcal{X}_{+, 1}$ and $\tmmathbf{\eta}$
such that for all $i \in \{1, \ldots, b\}$, $\rho_i^{\tmmathbf{\sigma}_0}$
increases stochastically with the boundary condition $\tmmathbf{\sigma}_0 \in
\Sigma$. 
More precisely, $R^{\bar{\tmmathbf{\eta}}}_{\bar{\rho}_1, \ldots,
\bar{\rho}_b} \left( \rho \right) \succ R^{\tmmathbf{\eta}}_{\rho_1,
\ldots, \rho_b} \left( \rho \right)$ \ if $\rho \in
\mathcal{M}_{1, +}$, $\rho_1, \ldots, \rho_b, \bar{\rho}_1, \ldots,
\bar{\rho}_b \in \mathcal{X}_{+, 1}$ and $\tmmathbf{\eta},
\bar{\tmmathbf{\eta}} \in L^1 \left( \left[ 0, \beta \right] \right)$ satisfy
$\bar{\tmmathbf{\eta}} \geqslant \tmmathbf{\eta}$ and
$\bar{\rho}_i^{\tmmathbf{\sigma}_0} \succ \rho_i^{\tmmathbf{\sigma}_0}$, $i =
1 \ldots b$, for any $\tmmathbf{\sigma}_0 \in \Sigma$. Also
$R^{\tmmathbf{\eta}}_{\rho_1,
\ldots, \rho_b} \left( \bar{\rho} \right)
\succ R^{\tmmathbf{\eta}}_{\rho_1,
\ldots, \rho_b} \left( \rho \right)$ \ if $\rho, \bar{\rho} \in
\mathcal{M}_{1, +}$, $\rho_1, \ldots, \rho_b \in \mathcal{X}_{+, 1}$ and $\tmmathbf{\eta}
 \in L^1 \left( \left[ 0, \beta \right] \right)$ are such that
$\bar{\rho} \succ \rho$ and 
$\rho_i^{\tmmathbf{\sigma}_0}$
increases stochastically with  $\tmmathbf{\sigma}_0 \in
\Sigma$, $i =
1 \ldots b$.
Both statements follow
immediately from equation (\ref{R}) and Theorem \ref{thm:mono}.

It appends that one can compare the stochastic increase in the parameter $\rho_1$ with the norm of the difference:

\begin{proposition}
  \label{prop:Sinc}Let $\tmmathbf{\eta} \in \Sigma$. Let $c > 0$ and $\Gamma$
  be the constants in Proposition~\ref{prop:gamma} and Theorem~\ref{thm:mono}
  corresponding to $\beta$ and $M = \max \left( \|\tmmathbf{h}\|_1 + \beta (b
  + 1), \lambda \right)$. Let $f : \Sigma \rightarrow {\mathbf R}$ be
  increasing. Assume that $\rho \in \mathcal{M}_{+, 1}$, $\rho_1 \in
  \mathcal{X}_0$ with $\rho_1^{\tmmathbf{\sigma}_0} \succ 0$ for all
  $\tmmathbf{\sigma}_0 \in \Sigma$ and let $\rho_2, \ldots, \rho_b \in
  \mathcal{X}_{+, 1}$. Then
  \begin{enumerate}
    \item
    \begin{eqnarray*}
      \left\| R^{\tmmathbf{\eta}}_{\rho_1, \ldots, \rho_b} (\rho)
      \right\|_{\tmop{TV}} & \leqslant & \Gamma \int \rho (\mathd
      \tmmathbf{\sigma}_0) \rho_1^{\tmmathbf{\sigma}_0} (\tmmathbf{\sigma}
      \bullet \tmmathbf{1}) .
    \end{eqnarray*}
    \item
    \begin{eqnarray*}
      R^{\tmmathbf{\eta}}_{\rho_1, \ldots, \rho_b} (\rho) (f) & \geqslant & c
      (f ( +) - f (-)) \int \rho (\mathd \tmmathbf{\sigma}_0)
      \rho^{\tmmathbf{\sigma}_0}_1 (\tmmathbf{\sigma} \bullet \tmmathbf{1}) .
    \end{eqnarray*}
  \end{enumerate}
\end{proposition}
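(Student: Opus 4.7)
My plan is to exploit the linearity of $R^{\tmmathbf{\eta}}_{\rho_1,\ldots,\rho_b}$ in $\rho_1$ together with a monotone coupling of the Hahn--Jordan decomposition of the signed measure $\rho_1^{\tmmathbf{\sigma}_0}$, so that we reduce each statement to an application of Proposition~\ref{prop:gamma} for part (1) and Theorem~\ref{thm:mono} for part (2). The stochastic positivity assumption $\rho_1^{\tmmathbf{\sigma}_0} \succ 0$ combined with $\rho_1^{\tmmathbf{\sigma}_0}(\Sigma)=0$ (from $\rho_1 \in \mathcal{X}_0$) implies that $m := \|\rho_1^{\tmmathbf{\sigma}_0}\|_{\tmop{TV}} = (\rho_1^{\tmmathbf{\sigma}_0})^+(\Sigma) = (\rho_1^{\tmmathbf{\sigma}_0})^-(\Sigma)$ and that the normalized positive part dominates stochastically the normalized negative part. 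By the remark preceding the proposition, there exists a monotone coupling $\Psi$ of $(\rho_1^{\tmmathbf{\sigma}_0})^+/m$ and $(\rho_1^{\tmmathbf{\sigma}_0})^-/m$.

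First, I would fix $\tmmathbf{\sigma}_0, \tmmathbf{\sigma}_2, \ldots, \tmmathbf{\sigma}_b$ and analyse the signed measure
\begin{eqnarray*}
\nu_{\tmmathbf{\sigma}_0,\tmmathbf{\sigma}_2,\ldots,\tmmathbf{\sigma}_b}
&=& m \int \Psi(\mathd \tmmathbf{\sigma}_1, \mathd \tmmathbf{\sigma}_1')
\Big[\mu_{\beta, \tmmathbf{h}+\tmmathbf{\eta}+\tmmathbf{\sigma}_1+\cdots+\tmmathbf{\sigma}_b,\lambda}
- \mu_{\beta, \tmmathbf{h}+\tmmathbf{\eta}+\tmmathbf{\sigma}_1'+\cdots+\tmmathbf{\sigma}_b,\lambda}\Big].
\end{eqnarray*}
Since every parameter lies within the range allowed by the choice of $M$ (here $\|\tmmathbf{h}+\tmmathbf{\eta}+\tmmathbf{\sigma}_1+\cdots+\tmmathbf{\sigma}_b\|_1 \leqslant \|\tmmathbf{h}\|_1 + \beta(b+1) \leqslant M$), Proposition~\ref{prop:gamma} gives, for each pair $(\tmmathbf{\sigma}_1,\tmmathbf{\sigma}_1')$ in the support of $\Psi$,
\begin{eqnarray*}
\left\|\mu_{\beta,\tmmathbf{h}+\tmmathbf{\eta}+\tmmathbf{\sigma}_1+\cdots,\lambda} - \mu_{\beta,\tmmathbf{h}+\tmmathbf{\eta}+\tmmathbf{\sigma}_1'+\cdots,\lambda}\right\|_{\tmop{TV}}
\leqslant \Gamma \|\tmmathbf{\sigma}_1-\tmmathbf{\sigma}_1'\|_1.
\end{eqnarray*}
Because $\Psi$ is monotone, $\tmmathbf{\sigma}_1 \geqslant \tmmathbf{\sigma}_1'$ pointwise a.s.\ and therefore $\|\tmmathbf{\sigma}_1-\tmmathbf{\sigma}_1'\|_1 = (\tmmathbf{\sigma}_1-\tmmathbf{\sigma}_1')\bullet \tmmathbf{1}$. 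Integrating against $\Psi$ and using the marginals collapses the telescoping difference into $\rho_1^{\tmmathbf{\sigma}_0}(\tmmathbf{\sigma}\bullet \tmmathbf{1})/m$, so that
\begin{eqnarray*}
\left\|\nu_{\tmmathbf{\sigma}_0,\tmmathbf{\sigma}_2,\ldots,\tmmathbf{\sigma}_b}\right\|_{\tmop{TV}} \leqslant \Gamma\, \rho_1^{\tmmathbf{\sigma}_0}(\tmmathbf{\sigma}\bullet \tmmathbf{1}),
\end{eqnarray*}
independently of $\tmmathbf{\sigma}_2,\ldots,\tmmathbf{\sigma}_b$. Integrating then against the probability measures $\rho(\mathd\tmmathbf{\sigma}_0)\rho_2^{\tmmathbf{\sigma}_0}(\mathd\tmmathbf{\sigma}_2)\cdots\rho_b^{\tmmathbf{\sigma}_0}(\mathd\tmmathbf{\sigma}_b)$ and using the triangle inequality for the TV norm yields part (1).

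Part (2) follows by the same mechanism but replacing Proposition~\ref{prop:gamma} with inequality (\ref{mu:stpos}) of Theorem~\ref{thm:mono}: for the monotone coupling, with $f$ increasing,
\begin{eqnarray*}
\mu_{\beta,\tmmathbf{h}+\tmmathbf{\eta}+\tmmathbf{\sigma}_1+\cdots,\lambda}(f) - \mu_{\beta,\tmmathbf{h}+\tmmathbf{\eta}+\tmmathbf{\sigma}_1'+\cdots,\lambda}(f) \geqslant c(f(+)-f(-))(\tmmathbf{\sigma}_1-\tmmathbf{\sigma}_1')\bullet \tmmathbf{1}.
\end{eqnarray*}
Integrating against $\Psi$ collapses the right-hand side to $c(f(+)-f(-))\rho_1^{\tmmathbf{\sigma}_0}(\tmmathbf{\sigma}\bullet \tmmathbf{1})$ and then integrating against $\rho$ and the probability measures $\rho_2,\ldots,\rho_b$ gives the desired lower bound. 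The only subtle point, and where I expect the routine manipulation to warrant care, is the book-keeping that confirms all fields appearing as arguments of $\mu_{\beta,\cdot,\lambda}$ have $L^1$-norm bounded by $M$ so that the constants $c,\Gamma$ from Theorem~\ref{thm:mono} and Proposition~\ref{prop:gamma} are uniform. The rest is linearity, Fubini, and the defining property of the monotone coupling.
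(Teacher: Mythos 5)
Your argument is correct and matches the paper's proof in all essentials: both rely on a monotone coupling $\Psi^{\tmmathbf{\sigma}_0}$ of the positive and negative parts of $\rho_1^{\tmmathbf{\sigma}_0}$ (available because $\rho_1^{\tmmathbf{\sigma}_0}$ is stochastically positive and has total mass zero), the identity $\|\tmmathbf{\sigma}_1^+ - \tmmathbf{\sigma}_1^-\|_1 = (\tmmathbf{\sigma}_1^+ - \tmmathbf{\sigma}_1^-) \bullet \tmmathbf{1}$ for ordered spins, the Lipschitz bound (\ref{mu:Gamma}) for part (1), and the strict-monotonicity lower bound (\ref{mu:stpos}) for part (2), all with the book-keeping check that the composite field has $L^1$-norm at most $M$. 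The only cosmetic difference is that you fix $\tmmathbf{\sigma}_0, \tmmathbf{\sigma}_2, \ldots, \tmmathbf{\sigma}_b$ and normalize the coupling, where the paper integrates everything at once with an unnormalized coupling; this changes nothing.
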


\begin{proof}
  
  \begin{enumerate}
    \item The assumption that $\rho_1^{\tmmathbf{\sigma}_0}$ is stochastically
    positive means exactly that its positive part is stochastically larger
    than its negative part. We consider therefore $\Psi^{\sigma_0}$ a monotone
    coupling of these two parts:
    \begin{gather*}
      \left\| R^{\tmmathbf{\eta}}_{\rho_1, \ldots, \rho_b} (\rho)
      \right\|_{\tmop{TV}}  \\=  \frac{1}{2} \sup_{f : |f| \leqslant 1} \int
      \rho (\mathd \tmmathbf{\sigma}_0) \mathd \Psi^{\tmmathbf{\sigma}_0}
      (\tmmathbf{\sigma}_1^+, \tmmathbf{\sigma}_1^-)
      \rho_2^{\tmmathbf{\sigma}_0} (\mathd \tmmathbf{\sigma}_2) \cdots
      \rho_b^{\tmmathbf{\sigma}_0} (\mathd \tmmathbf{\sigma}_b) \left[
      \mu_{\tmmathbf{h}+\tmmathbf{\eta}+\tmmathbf{\sigma}^+_1 + \cdots
      +\tmmathbf{\sigma}_b} \left( f \right) - \mu_{
      \tmmathbf{h}+\tmmathbf{\eta}+\tmmathbf{\sigma}^-_1 + \cdots
      +\tmmathbf{\sigma}_b} \left( f \right) \right]\\
      \leqslant  \Gamma \int \rho (\mathd \tmmathbf{\sigma}_0) \mathd
      \Psi^{\tmmathbf{\sigma}_0} (\tmmathbf{\sigma}_1^+,
      \tmmathbf{\sigma}_1^-)\|\tmmathbf{\sigma}_1^+ -\tmmathbf{\sigma}_1^-
      \|_1\\
       =  \Gamma \int \rho (\mathd \tmmathbf{\sigma}_0) \mathd
      \Psi^{\tmmathbf{\sigma}_0} (\tmmathbf{\sigma}_1^+,
      \tmmathbf{\sigma}_1^-) (\tmmathbf{\sigma}_1^+ -\tmmathbf{\sigma}_1^-)
      \bullet \tmmathbf{1}\\
      =  \Gamma \int \rho (\mathd \tmmathbf{\sigma}_0)
      \rho^{\tmmathbf{\sigma}_0}_1 (\tmmathbf{\sigma} \bullet \tmmathbf{1})
    \end{gather*}
    \item Let $F (\tmmathbf{\sigma}_1) = \int \rho_2^{\tmmathbf{\sigma}_0}
    (\mathd \tmmathbf{\sigma}_2) \cdots \rho_b^{\tmmathbf{\sigma}_0} (\mathd
    \tmmathbf{\sigma}_b) \mu_{
    \tmmathbf{h}+\tmmathbf{\eta}+\tmmathbf{\sigma}_1 + \cdots
    +\tmmathbf{\sigma}_b} \left( f \right)$, then
    $\tmmathbf{\sigma}_1 \mapsto F \left( \tmmathbf{\sigma}_1 \right)$ is
    non-decreasing and moreover, for every $\tmmathbf{\sigma}_1^- \leqslant
    \tmmathbf{\sigma}_1^+$,
    \begin{eqnarray*}
      F (\tmmathbf{\sigma}_1^+) - F (\tmmathbf{\sigma}_1^-) & \geqslant & c (f
      ( +) - f (-))  (\tmmathbf{\sigma}_1^+ -\tmmathbf{\sigma}_1^-) \bullet
      \tmmathbf{1}
    \end{eqnarray*}
    according to (\ref{mu:stpos}) in Theorem~\ref{thm:mono}. When we introduce
    the same coupling of the positive and negative parts of
    $\rho_1^{\tmmathbf{\sigma}_0}$ as in the former paragraph, we obtain the
    sought lower bound.
  \end{enumerate}
\end{proof}

\subsubsection{The cavity equation has a unique solution}

\begin{theorem}
  \label{thm:phi}Let $\rho_1, \ldots, \rho_b \in \mathcal{X}_{+, 1}$ and
  $\tmmathbf{\eta} \in L^1 \left( \left[ 0, \beta \right] \right)$. The cavity
  equation (\ref{cav}) has a unique solution in $\mathcal{M}_{+, 1}$, that we
  call $\Phi^{\tmmathbf{\eta}}_{\rho_1, \ldots, \rho_b}$. Furthermore, this
  solution satisfies:
  \begin{eqnarray}
    \Phi^{\tmmathbf{\eta}}_{\bar{\rho}_1, \rho_2, \ldots, \rho_b} -
    \Phi^{\tmmathbf{\eta}}_{\rho_1, \ldots, \rho_b} & = & \left( I -
    R^{\tmmathbf{\eta}}_{\bar{\rho}_1, \rho_2, \ldots, \rho_b} \right)^{- 1}
    R^{\tmmathbf{\eta}}_{\bar{\rho}_1 - \rho_1, \rho_2, \ldots, \rho_b} \left(
    \Phi^{\tmmathbf{\eta}}_{\rho_1, \ldots, \rho_b} \right) \text{, \ }
    \forall \bar{\rho}_1 \in \mathcal{X}_{1, +} .  \label{Phib}
  \end{eqnarray}
  Moreover, the solution to the cavity equation
  $\Phi^{\tmmathbf{\eta}}_{\rho_1, \ldots, \rho_b}$ increases stochastically
  with its parameters $\rho_1, \ldots, \rho_b \in \mathcal{X}_{+, 1}$ increasing with the boundary condition, and  $\tmmathbf{\eta} \in L^1 \left( \left[ 0, \beta \right] \right)$. More
  precisely, $\Phi^{\bar{\tmmathbf{\eta}}}_{\bar{\rho}_1, \ldots,
  \bar{\rho}_b} \succ \Phi^{\tmmathbf{\eta}}_{\rho_1, \ldots, \rho_b}$ \ if
  $\rho_1, \ldots, \rho_b, \bar{\rho}_1, \ldots, \bar{\rho}_b \in
  \mathcal{X}_{+, 1}$ and $\tmmathbf{\eta}, \bar{\tmmathbf{\eta}} \in L^1
  \left( \left[ 0, \beta \right] \right)$ satisfy $\bar{\tmmathbf{\eta}}
  \geqslant \tmmathbf{\eta}$ and $\bar{\rho}_i^{\tmmathbf{\sigma}_0} \succ
  \rho_i^{\tmmathbf{\sigma}_0}$, $i = 1 \ldots b$, for any
  $\tmmathbf{\sigma}_0 \in \Sigma$, with both $\bar{\rho}_i^{\tmmathbf{\sigma}_0}$ and $ \rho_i^{\tmmathbf{\sigma}_0}$ being stochastically increasing in $\tmmathbf{\sigma}_0 \in \Sigma$.
\end{theorem}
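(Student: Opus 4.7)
The plan is to prove the three assertions (existence--uniqueness, the identity~(\ref{Phib}), and stochastic monotonicity) in that order, each reducing to algebra involving the contraction properties of $R$ on $\mathcal{M}_0$ established in Proposition~\ref{prop:Scont}.

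For existence and uniqueness I would apply the Banach fixed point theorem. The map $\rho \mapsto R^{\tmmathbf{\eta}}_{\rho_1,\ldots,\rho_b}(\rho)$ is affine in $\rho$ and sends $\mathcal{M}_{+,1}$ into itself, since it preserves positivity and the mass of positive measures (computed explicitly in the proof of Proposition~\ref{prop:Scont}(1)). Starting from any $\rho^{(0)} \in \mathcal{M}_{+,1}$ and iterating, the successive differences $\rho^{(n+1)} - \rho^{(n)} = R^{\tmmathbf{\eta}}_{\rho_1,\ldots,\rho_b}(\rho^{(n)} - \rho^{(n-1)})$ lie in $\mathcal{M}_0$, hence by Proposition~\ref{prop:Scont}(1) their total variation norms contract by the factor $\gamma<1$. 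The sequence is therefore Cauchy, and since $\mathcal{M}$ is a Banach space and $\mathcal{M}_{+,1}$ is closed in total variation, it converges to a fixed point $\Phi^{\tmmathbf{\eta}}_{\rho_1,\ldots,\rho_b} \in \mathcal{M}_{+,1}$. Uniqueness follows at once: the difference of two fixed points lies in $\mathcal{M}_0$ and is fixed by a strict contraction, so it vanishes.

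For identity~(\ref{Phib}) set $\Phi = \Phi^{\tmmathbf{\eta}}_{\rho_1,\ldots,\rho_b}$ and $\bar{\Phi} = \Phi^{\tmmathbf{\eta}}_{\bar{\rho}_1,\rho_2,\ldots,\rho_b}$. Linearity of $R$ in each $\rho_i$-slot gives
\begin{align*}
\bar{\Phi} - \Phi
&= R^{\tmmathbf{\eta}}_{\bar{\rho}_1,\rho_2,\ldots,\rho_b}(\bar{\Phi}) - R^{\tmmathbf{\eta}}_{\rho_1,\rho_2,\ldots,\rho_b}(\Phi) \\
&= R^{\tmmathbf{\eta}}_{\bar{\rho}_1,\rho_2,\ldots,\rho_b}(\bar{\Phi} - \Phi) + R^{\tmmathbf{\eta}}_{\bar{\rho}_1 - \rho_1,\rho_2,\ldots,\rho_b}(\Phi),
\end{align*}
so $(I - R^{\tmmathbf{\eta}}_{\bar{\rho}_1,\rho_2,\ldots,\rho_b})(\bar{\Phi} - \Phi) = R^{\tmmathbf{\eta}}_{\bar{\rho}_1 - \rho_1,\rho_2,\ldots,\rho_b}(\Phi)$, an equality in $\mathcal{M}_0$. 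Applying $(I - R^{\tmmathbf{\eta}}_{\bar{\rho}_1,\rho_2,\ldots,\rho_b})^{-1}$, which is well defined on $\mathcal{M}_0$ by Proposition~\ref{prop:Scont}(2), yields~(\ref{Phib}).

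For stochastic monotonicity I would telescope, changing one parameter at a time, so that only two cases remain: (i)~increasing $\tmmathbf{\eta}$ to $\bar{\tmmathbf{\eta}}$ with all $\rho_i$ fixed; (ii)~replacing $\rho_1$ by $\bar{\rho}_1$ with $\bar{\rho}_1^{\tmmathbf{\sigma}_0} \succ \rho_1^{\tmmathbf{\sigma}_0}$, where by hypothesis both families are stochastically increasing in $\tmmathbf{\sigma}_0$. In case~(ii), formula~(\ref{Phib}) combined with (\ref{R:inv}) expresses $\bar{\Phi} - \Phi$ as the series $\sum_{k \geqslant 0} (R^{\tmmathbf{\eta}}_{\bar{\rho}_1,\rho_2,\ldots,\rho_b})^k [R^{\tmmathbf{\eta}}_{\bar{\rho}_1 - \rho_1,\rho_2,\ldots,\rho_b}(\Phi)]$. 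The seed is stochastically positive: for any increasing test function $f$ and any fixed $\tmmathbf{\sigma}_0,\tmmathbf{\sigma}_2,\ldots,\tmmathbf{\sigma}_b$, the map $\tmmathbf{\sigma}_1 \mapsto \mu_{\beta,\tmmathbf{h}+\tmmathbf{\eta}+\tmmathbf{\sigma}_1 + \cdots + \tmmathbf{\sigma}_b,\lambda}(f)$ is non-decreasing by Theorem~\ref{thm:mono}, so integrating against the stochastically positive signed measure $(\bar{\rho}_1 - \rho_1)^{\tmmathbf{\sigma}_0}$ produces a non-negative value. The main obstacle is then to check that each factor $R^{\tmmathbf{\eta}}_{\bar{\rho}_1,\rho_2,\ldots,\rho_b}$ preserves stochastic positivity on $\mathcal{M}_0$; this is precisely the point where the hypothesis that $\bar{\rho}_1^{\tmmathbf{\sigma}_0},\rho_2^{\tmmathbf{\sigma}_0},\ldots,\rho_b^{\tmmathbf{\sigma}_0}$ are stochastically monotone in $\tmmathbf{\sigma}_0$ enters, via a monotone coupling $\Psi^{\tmmathbf{\sigma}_0}$ of the positive and negative parts of the input (as in the remark preceding Proposition~\ref{prop:Sinc}), reducing the claim to the monotonicity of $\tmmathbf{\sigma}_0 \mapsto \rho_i^{\tmmathbf{\sigma}_0}$ together with Theorem~\ref{thm:mono}. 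Case~(i) is analogous, using that $R^{\bar{\tmmathbf{\eta}}}_{\rho_1,\ldots,\rho_b}(\Phi^{\tmmathbf{\eta}}) - R^{\tmmathbf{\eta}}_{\rho_1,\ldots,\rho_b}(\Phi^{\tmmathbf{\eta}})$ is stochastically positive by the monotonicity of the single-site measure in the field, and the same preservation argument for the Neumann series.
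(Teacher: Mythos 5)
Your proof is correct, and differs from the paper's at two points. For existence and uniqueness you apply the Banach fixed point theorem directly to the affine map $R^{\tmmathbf{\eta}}_{\rho_1,\ldots,\rho_b}$ on $\mathcal{M}_{+,1}$, using that successive differences lie in $\mathcal{M}_0$ where $R$ is a $\gamma$-contraction and that $\mathcal{M}_{+,1}$ is closed in total variation; the paper instead first solves the cavity equation for constant $\rho_i$ (where $R$ collapses to a rank-one map depending only on total mass) and then extends to general parameters by a continuation argument built on the very identity~(\ref{Phib}). Yours is the cleaner route, and has the conceptual advantage of making (\ref{Phib}) a consequence rather than a scaffold of existence. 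The algebra establishing (\ref{Phib}) is identical. For monotonicity, the paper argues from the iteration formula $\Phi^{\tmmathbf{\eta}}_{\rho_1,\ldots,\rho_b}=\lim_k\bigl(R^{\tmmathbf{\eta}}_{\rho_1,\ldots,\rho_b}\bigr)^k(\nu)$ (its~(\ref{Phi:Rk})) combined with the monotonicity of $R$ in all its arguments, comparing the iterates at each $k$ and passing to the limit; you instead expand $\bar{\Phi}-\Phi$ via the Neumann series for $\bigl(I-\bar R\bigr)^{-1}$ and verify that the seed $R^{\tmmathbf{\eta}}_{\bar\rho_1-\rho_1,\rho_2,\ldots,\rho_b}(\Phi)$ is stochastically positive and that $\bar R$ preserves stochastic positivity on $\mathcal{M}_0$ — which is exactly where, as you correctly flag, the hypothesis that the $\rho_i^{\tmmathbf{\sigma}_0}$ and $\bar\rho_i^{\tmmathbf{\sigma}_0}$ are stochastically increasing in $\tmmathbf{\sigma}_0$ is needed. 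The two monotonicity arguments are equivalent in substance, both resting on the geometric-series structure of $(I-R)^{-1}$ and the positivity/monotonicity of $R$; your version has the merit of exhibiting $\bar{\Phi}-\Phi$ directly as a sum of stochastically positive terms, which is the same device the paper later reuses in the proof of Theorem~\ref{thm:Dkrho}.
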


\begin{proof}
  We consider first the case when $\rho_i \in \mathcal{X}_{+, 1}$ are
  constant, i.e.~$\rho_i^{\tmmathbf{\sigma}}$ does not depend on
  $\tmmathbf{\sigma} \in \Sigma$, for all $i = 1 \ldots b$. Then,
  $R^{\tmmathbf{\eta}}_{\rho_1, \ldots, \rho_b} \left( \rho \right)$ depends
  uniquely on the mass of $\rho \in \mathcal{M}$. Consequently the unique
  solution to the cavity equation in $\mathcal{M}_{+, 1}$, for this
  parameters, is $R^{\tmmathbf{\eta}}_{\rho_1, \ldots, \rho_b} \left( \nu
  \right)$ where $\nu$ is an arbitrary probability measure on $\Sigma$. Now we
  consider $\rho_1, \ldots, \rho_b \in \mathcal{X}_{+, 1}$ such that the
  cavity equation has a unique solution $\Phi^{\tmmathbf{\eta}}_{\rho_1,
  \ldots, \rho_b}$ together with an arbitrary $\bar{\rho}_1 \in
  \mathcal{X}_{+, 1}$ and $\rho \in \mathcal{M}_{+, 1}$. We use the shorter
  notations $R = R^{\tmmathbf{\eta}}_{\rho_1, \ldots, \rho_b}$, $\bar{R} =
  R^{\tmmathbf{\eta}}_{\bar{\rho}_1, \ldots, \rho_b}$ and $\Phi =
  \Phi^{\tmmathbf{\eta}}_{\rho_1, \ldots, \rho_b}$. We have
  \begin{eqnarray*}
    \rho = \bar{R} \left( \rho \right) & \Leftrightarrow & \rho - \Phi =
    \bar{R} \left( \rho \right) - R \left( \Phi \right)\\
    & \Leftrightarrow & \rho - \Phi = \bar{R} \left( \rho - \Phi \right) +
    \bar{R} \left( \Phi \right) - R \left( \Phi \right)\\
    & \Leftrightarrow & \left( I - \bar{R} \right) \left( \rho - \Phi \right)
    = \left( \bar{R} - R \right) \left( \Phi \right) .
  \end{eqnarray*}
  But $\rho - \Phi \in \mathcal{M}_0$, where $I - \bar{R}$ is invertible
  because of Proposition \ref{prop:Scont}. Consequently there is a unique
  solution to the cavity equation with parameters $\bar{\rho}_1, \rho_2,
  \ldots, \rho_b \in \mathcal{X}_{+, 1}$ and it is given by formula
  (\ref{Phib}). In the same way, when we change any other of the $\rho_i \in
  \mathcal{X}_{+, 1}$ the solution to the cavity equation still exists and
  remains unique, so we can change all $\rho_1, \ldots, \rho_b$ to arbitrary
  elements of $\mathcal{X}_{+, 1}$.
  
  The monotonicity of the solution to the cavity equation follows at once from
  the fact that, for any probability measure $\nu \in \mathcal{M}_{1, +}$,
  \begin{eqnarray}
    \Phi^{\tmmathbf{\eta}}_{\rho_1, \ldots, \rho_b} & = & \lim_k \left(
    R^{\tmmathbf{\eta}}_{\rho_1, \ldots, \rho_b} \right)^k \left( \nu \right) 
    \label{Phi:Rk}
  \end{eqnarray}
  (according to the first point of Proposition \ref{prop:Scont}) together with
  the monotonicity of $R^{\tmmathbf{\eta}}_{\rho_1, \ldots, \rho_b} \left(
  \rho \right)$ in all parameters $\rho \in \mathcal{M}_{+, 1}$, $\rho_1,
  \ldots, \rho_b \in \mathcal{X}_{+, 1}$ and $\tmmathbf{\eta}$ such that for
  all $i \in \{1, \ldots, b\}$, $\rho_i^{\tmmathbf{\sigma}_0}$
increases stochastically with the boundary condition $\tmmathbf{\sigma}_0 \in
\Sigma$. 
\end{proof}

\subsubsection{Existence of a fixed point for the cavity equation}

We consider
\begin{eqnarray}
  \nu^{\tmmathbf{\eta}}_n & = & \mu_{\mathbbm{T}_n}^{\tmmathbf{\eta}, +}
  (\tmmathbf{\sigma}_r \in .)  \label{nu:def}
\end{eqnarray}
for each $n \geqslant 0$ and $\tmmathbf{\eta} \in \Sigma$, where
$\mu_{\mathbbm{T}_n}^{\tmmathbf{\eta}, +}$ is the quantum Gibbs measure with
parameters $\beta, \tmmathbf{h}, \lambda$ on the tree $\mathbbm{T}_n$, with
plus boundary condition on the leaves and field $\tmmathbf{\eta}$ at the root.
Observe that $\nu_0 = \delta_+$ and $\nu_{n + 1} = \Phi_{\nu_n, \ldots,
\nu_n}$, the solution to the cavity equation with parameters $\nu_n, \ldots,
\nu_n$.

\begin{lemma}
  Let $\beta, \lambda, M > 0$. There is $c > 0$ such that, for all
  $\tmmathbf{h}, \tmmathbf{\eta}, \bar{\tmmathbf{\eta}}$ with $L^1$-norm at
  most $M$ and all $n \in \mathbbm{N}^{\star}$,
  \begin{enumerate}
    \item If $\bar{\tmmathbf{\eta}} \geqslant \tmmathbf{\eta}$,
    \begin{eqnarray}
      (\nu^{\bar{\tmmathbf{\eta}}}_n - \nu^{\tmmathbf{\eta}}_n) (\sigma
      \bullet \tmmathbf{1}) & \geqslant & c ( \bar{\tmmathbf{\eta}}
      -\tmmathbf{\eta}) \bullet \tmmathbf{1}.  \label{nu:pos}
    \end{eqnarray}
    \item
    \begin{equation}
      c \leqslant \frac{\mathd \nu_n^{\tmmathbf{\eta}}}{\mathd \varphi}
      \leqslant c^{- 1} \label{dnu:bounded}
    \end{equation}
  \end{enumerate}
\end{lemma}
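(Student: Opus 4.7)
The plan is to reduce both parts to single-site statements by conditioning on the spins at the immediate children of the root. By the DLR equation, conditionally on $(\sigma_1,\ldots,\sigma_b)$ (the spins at the $b$ children of $r$), the root spin $\sigma_r$ is distributed according to the single-site measure $\mu_{\tmmathbf{h}+\tmmathbf{\eta}+\sigma_1+\cdots+\sigma_b}$ with $\pi=\emptyset$. The effective external field $\tmmathbf{h}+\tmmathbf{\eta}+\sum_{i}\sigma_i$ has $L^1$-norm bounded by $2M+\beta b$, \emph{independently of $n$}, which is exactly what makes the constants uniform.

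For \emph{part 2}, I would integrate out all spins of $\mathbbm{T}_n\setminus\{r\}$ to obtain
\[
\frac{\mathd\nu_n^{\tmmathbf{\eta}}}{\mathd\varphi}(\sigma_r) \;=\; \frac{F(\sigma_r)}{\int F\,\mathd\varphi},
\]
where $\sigma_r$ enters the exponent of $F$ only through $(\tmmathbf{h}+\tmmathbf{\eta})\bullet\sigma_r$ and the $b$ couplings $\sigma_r\bullet\sigma_i$. For any $\sigma_r,\sigma_r'\in\Sigma$ one has $|(\tmmathbf{h}+\tmmathbf{\eta})\bullet(\sigma_r-\sigma_r')|\leqslant 4M$ and $|\sum_i(\sigma_r-\sigma_r')\bullet\sigma_i|\leqslant 2\beta b$, so the ratio $F(\sigma_r)/F(\sigma_r')$ is confined to $[e^{-(4M+2\beta b)},e^{4M+2\beta b}]$ and the claim follows with $c=e^{-(4M+2\beta b)}$.

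For \emph{part 1}, set $g^{\xi}(\sigma_1,\ldots,\sigma_b) := \mu_{\tmmathbf{h}+\xi+\sigma_1+\cdots+\sigma_b}(f)$ with $f(\sigma)=\sigma\bullet\tmmathbf{1}$. The DLR equation gives $\nu_n^{\xi}(f)=\mu_{\mathbbm{T}_n}^{\xi,+}(g^{\xi})$, and I would decompose
\[
\nu_n^{\bar{\tmmathbf{\eta}}}(f)-\nu_n^{\tmmathbf{\eta}}(f) \;=\; \mu_{\mathbbm{T}_n}^{\bar{\tmmathbf{\eta}},+}\bigl(g^{\bar{\tmmathbf{\eta}}}-g^{\tmmathbf{\eta}}\bigr) + \bigl[\mu_{\mathbbm{T}_n}^{\bar{\tmmathbf{\eta}},+}(g^{\tmmathbf{\eta}})-\mu_{\mathbbm{T}_n}^{\tmmathbf{\eta},+}(g^{\tmmathbf{\eta}})\bigr].
\]
The second bracket is non-negative because $g^{\tmmathbf{\eta}}$ is increasing in $(\sigma_1,\ldots,\sigma_b)$ by Theorem~\ref{thm:mono}, while $\mu_{\mathbbm{T}_n}^{\bar{\tmmathbf{\eta}},+}\succ\mu_{\mathbbm{T}_n}^{\tmmathbf{\eta},+}$ by Corollary~\ref{cor:mu:mono}. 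The first bracket is controlled by applying \eqref{mu:stpos} pointwise in $(\sigma_1,\ldots,\sigma_b)$: noting that $f(+)-f(-)=2\beta$, one gets $g^{\bar{\tmmathbf{\eta}}}-g^{\tmmathbf{\eta}}\geqslant 2c\beta\,(\bar{\tmmathbf{\eta}}-\tmmathbf{\eta})\bullet\tmmathbf{1}$, where $c$ is the constant of Theorem~\ref{thm:mono} corresponding to the $L^1$-bound $2M+\beta b$ and transverse field $\lambda$. Combining the two estimates establishes \eqref{nu:pos}.

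The only delicate point is ensuring $n$-independence of the constants, and it is precisely this constraint that dictates disintegrating over only the immediate children of the root: a deeper integration would force the effective field at $r$ to incorporate contributions from further layers of the tree, and one would lose the uniform $L^1$ bound required to invoke Theorem~\ref{thm:mono} with a universal constant.
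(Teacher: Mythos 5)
Your proof is correct and is essentially a detailed unpacking of the paper's own (one-line) argument, which simply cites Theorem~\ref{thm:mono} and the DLR equation for part~1 and the definition of $\mu_{\mathbbm{T}_n}^{\tmmathbf{\eta},+}$ plus DLR for part~2. Conditioning on the spins at the immediate children of the root so that the effective field at $r$ has $L^1$-norm at most $2M+\beta b$ uniformly in $n$, and then applying \eqref{mu:stpos} (for the lower bound) and the pointwise boundedness of the Boltzmann factor in $\sigma_r$ (for the two-sided density bound), is exactly the intended reduction.
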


\begin{proof}
  The first point is a consequence of Theorem \ref{thm:mono} together with the
  DLR equation while the second one follows from the definition of
  $\mu_{\mathbbm{T}_n}^{\tmmathbf{\eta}, +}$ together with the DLR equation.
\end{proof}

\begin{proposition}
  For every $\tmmathbf{\eta} \in L^1 \left( \left[ 0, \beta \right] \right)$,
  the stochastically decreasing sequence of probability distributions
  $\nu^{\tmmathbf{\eta}}_n$ converge to a probability distribution
  $\nu_{\infty}^{\tmmathbf{\eta}}$ on $\Sigma$. The conditional distribution
  $\nu_{\infty} \in \mathcal{X}_{1, +}$ is a fixed point of the cavity
  equation, i.e.~$\nu_{\infty} = \Phi_{\nu_{\infty}, \ldots, \nu_{\infty}}$.
\end{proposition}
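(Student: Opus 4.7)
The plan is to verify three facts: the recursive identity $\nu_{n+1}^{\tmmathbf{\eta}}=\Phi^{\tmmathbf{\eta}}_{\nu_n,\ldots,\nu_n}$ that justifies the stochastic decrease, existence of a pointwise limit $\nu_\infty^{\tmmathbf{\eta}}$ as a probability measure, and the fact that $\nu_\infty$ solves the cavity equation. For the first, I apply the DLR equation at the root $r$ of $\mathbbm{T}_{n+1}^{b}$: conditioning on the spins $\tmmathbf{\sigma}_{r_1},\ldots,\tmmathbf{\sigma}_{r_b}$ at the children of $r$ gives the single-site law of $\tmmathbf{\sigma}_r$ with field $\tmmathbf{h}+\tmmathbf{\eta}+\sum_i\tmmathbf{\sigma}_{r_i}$, while conditioning on $\tmmathbf{\sigma}_r$ instead decouples the $b$ subtrees and makes each $\tmmathbf{\sigma}_{r_i}\sim\nu_n^{\tmmathbf{\sigma}_r}$ by definition. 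Unfolding the two conditionings shows that the marginal of $\tmmathbf{\sigma}_r$ under $\mu_{\mathbbm{T}_{n+1}}^{\tmmathbf{\eta},+}$ is a fixed point of $R^{\tmmathbf{\eta}}_{\nu_n,\ldots,\nu_n}$, hence equals $\Phi^{\tmmathbf{\eta}}_{\nu_n,\ldots,\nu_n}$ by the uniqueness part of Theorem~\ref{thm:phi}. An induction starting from $\nu_0=\delta_+$ and exploiting the monotonicity of $\Phi$ in its parameters (also Theorem~\ref{thm:phi}) then delivers $\nu_{n+1}^{\tmmathbf{\eta}}\prec\nu_n^{\tmmathbf{\eta}}$; the required stochastic monotonicity of $\tmmathbf{\sigma}_0\mapsto\nu_n^{\tmmathbf{\sigma}_0}$ follows from Corollary~\ref{cor:mu:mono} applied to the finite-tree Gibbs measure.

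For the existence of the limit I use the uniform bound \eqref{dnu:bounded}, which places the densities $f_n=\mathd\nu_n^{\tmmathbf{\eta}}/\mathd\varphi$ in a weak-$*$ compact subset of $L^\infty(\varphi)$. Any subsequence thus admits a sub-subsequence converging weakly-$*$ to some $f_\infty$ with $c\leqslant f_\infty\leqslant c^{-1}$, and I set $\nu_\infty^{\tmmathbf{\eta}}:=f_\infty\cdot\varphi$. Uniqueness of the limit follows from the preceding monotonicity: for every bounded increasing $g$ the sequence $\nu_n^{\tmmathbf{\eta}}(g)$ is decreasing and hence convergent, so every weak-$*$ subsequential limit assigns the same value to $g$. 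Since the upper cylindrical sets $\{\tmmathbf{\sigma}(t_1)=\cdots=\tmmathbf{\sigma}(t_k)=+\}$ form a $\pi$-system generating the Borel $\sigma$-algebra on $\Sigma$, Dynkin's $\pi$--$\lambda$ lemma forces all subsequential limits to agree, so the full sequence converges to a single probability measure $\nu_\infty^{\tmmathbf{\eta}}$.

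Finally, I pass to the limit in $\nu_{n+1}^{\tmmathbf{\eta}}=R^{\tmmathbf{\eta}}_{\nu_n,\ldots,\nu_n}(\nu_{n+1}^{\tmmathbf{\eta}})$ tested against an arbitrary bounded measurable $f$. Writing the right-hand side as $\int\nu_{n+1}^{\tmmathbf{\eta}}(\mathd\tmmathbf{\sigma}_0)\,G_n(\tmmathbf{\sigma}_0)$ with $G_n(\tmmathbf{\sigma}_0):=\int\prod_i\nu_n^{\tmmathbf{\sigma}_0}(\mathd\tmmathbf{\sigma}_i)\,\mu_{\beta,\tmmathbf{h}+\tmmathbf{\eta}+\sum_i\tmmathbf{\sigma}_i,\lambda}(f)$, the convergence of the previous paragraph applied for each $\tmmathbf{\sigma}_0$ together with the Lipschitz estimate \eqref{mu:Gamma} yields $G_n\to G_\infty$ pointwise, with the uniform bound $|G_n|\leqslant\|f\|_\infty$. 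The density bound on $\nu_{n+1}^{\tmmathbf{\eta}}$ then lets me invoke dominated convergence against $\varphi$ to conclude that the right-hand side tends to $\int\nu_\infty^{\tmmathbf{\eta}}(\mathd\tmmathbf{\sigma}_0)\,G_\infty(\tmmathbf{\sigma}_0)$, while the left-hand side tends to $\nu_\infty^{\tmmathbf{\eta}}(f)$. This gives the cavity equation for $\nu_\infty$, and Theorem~\ref{thm:phi} identifies the fixed point as $\Phi^{\tmmathbf{\eta}}_{\nu_\infty,\ldots,\nu_\infty}$; the measurability of $\tmmathbf{\eta}\mapsto\nu_\infty^{\tmmathbf{\eta}}$ needed for $\nu_\infty\in\mathcal{X}_{+,1}$ is immediate from the pointwise monotone limit of measurable maps.

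The main technical obstacle is the joint control in the last step: because $\nu_n$ lives in the function space $\mathcal{X}_{+,1}$ rather than in $\mathcal{M}_{+,1}$, the limit procedure must accommodate the $\tmmathbf{\sigma}_0$-dependence of the conditional measures $\nu_n^{\tmmathbf{\sigma}_0}$ inside the integral defining $R^{\tmmathbf{\eta}}_{\nu_n,\ldots,\nu_n}$. The uniform density bound \eqref{dnu:bounded}, combined with the Lipschitz estimate \eqref{mu:Gamma} on the single-site measure, is precisely what makes this uniformity available.
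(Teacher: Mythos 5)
Your proof is correct and follows essentially the same route as the paper's: stochastic decrease from monotonicity of the cavity solution starting at $\nu_0=\delta_+$, a $\pi$-system plus compactness argument for convergence, and passage to the limit in the cavity equation for the fixed point. The only cosmetic differences are that the paper phrases the compactness as tightness of $(\nu_n^{\tmmathbf{\eta}})$ rather than weak-$*$ compactness of the densities, and dispatches the final step by citing continuity of $\Phi$ through (\ref{Phib}) and (\ref{R:norm}) rather than through your explicit dominated-convergence computation; both routes rest on the same uniform density bound (\ref{dnu:bounded}).
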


\begin{proof}
  The fact that the sequence $\nu^{\tmmathbf{\eta}}_n$ is stochastically
  decreasing is an inductive consequence of the monotonicity of the solution to the cavity equation as $\nu_1
  \prec \nu_0 = \delta_+$. Now consider the $\pi$-system
  \begin{eqnarray*}
    \Pi & = & \left\{ \left\{ \sigma \left( t \right) = +, \forall t \in I
    \right\}, I \subset \left[ 0, \beta \right] \text{ finite} \right\} .
  \end{eqnarray*}
  Obviously $\sigma \left( \Pi \right)$ is the whole $\sigma$-algebra
  corresponding to the Skorohod topology. Furthermore, any $A \in \Pi$ is an
  increasing event and consequently $\nu^{\tmmathbf{\eta}}_n \left( A \right)$
  has a decreasing limit. On the other hand, there is $C < \infty$ such that
  the Radon-Nikodym derivative $\frac{\mathd \nu^{\tmmathbf{\eta}}_n}{\mathd
  \varphi}$ is uniformly bounded by $C$, for any $n \geqslant 0$, and
  consequently $\left( \nu_n^{\tmmathbf{\eta}} \right)$ is tight. Indeed, if
  $\Sigma_k$ is the compact set of spin configurations with at most $k$ flips,
  then $\nu_n^{\tmmathbf{\eta}} \left( \Sigma_k^c \right) \leqslant C \varphi
  \left( \Sigma_k^c \right)$. This proves existence and uniqueness of the
  limit $\nu_{\infty}^{\tmmathbf{\eta}}$. The conclusion that $\nu_{\infty}$
  is a fixed point of the cavity equation is an obvious consequence of the
  continuity of the solution to the cavity equation along its parameters, see
  (\ref{Phib}) and (\ref{R:norm}). Note that if we are not in the uniqueness
  regime, the cavity equation has another fixed point corresponding to minus
  boundary condition.
\end{proof}

\subsubsection{Derivative of the solution of the cavity equation at the fixed
point}

Two natural norms on $\mathcal{X}$ are
\begin{eqnarray}
  \| \rho \|_{\infty, \mathcal{X}} & = & \sup_{\tmmathbf{\eta}} \|
  \rho^{\tmmathbf{\eta}} \|_{\tmop{TV}}  \label{Xinf}\\
  \| \rho \|_{1, \mathcal{X}} & = & \int \mathd \varphi (\tmmathbf{\eta})\|
  \rho^{\tmmathbf{\eta}} \|_{\tmop{TV}} .  \label{X1}
\end{eqnarray}
The first norm makes of $\mathcal{X}$ a Banach space as $\mathcal{M}$ itself
is a Banach space. As far as the second norm is concerned, we observe that
when $\rho^{\tmmathbf{\eta}}$ is absolutely continuous with respect to
$\varphi$, $\| \rho \|_{1, \mathcal{X}} = \frac{1}{2} \int \mathd \varphi
(\tmmathbf{\eta}) \mathd \varphi \left( \sigma \right) \left| \frac{\mathd
\rho^{\tmmathbf{\eta}}}{\mathd \varphi} \right|$.

Now we introduce the derivative of the solution of the cavity equation
$\Phi^{\tmmathbf{\eta}}_{\rho_1, \ldots, \rho_b}$ along the first variable
$\rho_1$ in the direction $\rho \in \mathcal{X}_0$, at the fixed point $\rho_1
= \cdots = \rho_b = \nu_{\infty}$. The formula follows at once from
(\ref{Phib}):
\begin{eqnarray}
  D^{\tmmathbf{\eta}} (\rho) & = & \left( I -
  R^{\tmmathbf{\eta}}_{\nu_{\infty}, \ldots, \nu_{\infty}} \right)^{- 1}
  R^{\tmmathbf{\eta}}_{\rho, \nu_{\infty}, \ldots, \nu_{\infty}} \left(
  \nu^{\tmmathbf{\eta}}_{\infty} \right) \text{,} \forall \rho \in
  \mathcal{X}_0 .  \label{D}
\end{eqnarray}
Note that, for all $\tmmathbf{\eta} \in \Sigma$ and $\rho \in \mathcal{X}_0$,
$D^{\tmmathbf{\eta}} \left( \rho \right) \in \mathcal{M}_0$. Consequently $D
\left( \rho \right) : \tmmathbf{\eta} \mapsto D^{\tmmathbf{\eta}} \left( \rho
\right) \in \mathcal{X}_0$ and $D$ can be seen as a linear operator on
$\mathcal{X}_0$. Our main theorem gives a bound on the norm of the operator
norm of the $k$-th iterate of $D$, denoted by $D^k$, from the space
$(\mathcal{X}_0, \|.\|_{1, \mathcal{X}})$ to $(\mathcal{X}_0, \|.\|_{\infty,
\mathcal{X}})$:

\begin{theorem}
  \label{thm:Dkrho}There is $C < \infty$ that depends only on $\beta,
  \tmmathbf{h}, \lambda, b$ such that, for all $\rho \in \mathcal{X}_0$ and
  for all $k \geqslant 1$,
  \begin{eqnarray}
    \|D^k (\rho)\|_{\infty, \mathcal{X}} & \leqslant & \frac{C}{b^k}  \| \rho
    \|_{1, \mathcal{X}} .  \label{Dk:norm}
  \end{eqnarray}
  In particular, the spectral radius of $D$ is at most $1 / b$.
\end{theorem}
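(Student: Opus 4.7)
The plan is to exploit the observation that $D^k(\rho)^{\tmmathbf{\eta}}$ naturally represents the sensitivity of the root marginal $\nu_\infty^{\tmmathbf{\eta}}$ to a perturbation $\rho$ of the conditional marginal of a single node located at depth $k$ along a distinguished branch of the tree. The factor $b^{-k}$ in the conclusion is then the imprint of the decay-of-correlations exponent $\kappa(+) \leq 1/b$ from Theorem~\ref{thm:decay}.

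I would begin by unfolding the recursion. From (\ref{D}) and the Neumann series (\ref{R:inv}),
\[
D^{\tmmathbf{\eta}}(\rho) \;=\; \sum_{j \geq 0} \bigl(R^{\tmmathbf{\eta}}_{\nu_\infty,\ldots,\nu_\infty}\bigr)^j \, R^{\tmmathbf{\eta}}_{\rho,\nu_\infty,\ldots,\nu_\infty}\bigl(\nu_\infty^{\tmmathbf{\eta}}\bigr).
\]
Iterating this representation $k$ times yields a nested expression: at each level of nesting, the ``first'' child carries the recursive perturbation while the other $b-1$ children carry the fixed point $\nu_\infty$. By induction on $k$, this identifies $D^k(\rho)^{\tmmathbf{\eta}}$ with the sensitivity of $\Phi^{\tmmathbf{\eta}}$ evaluated at the fixed point in the direction of a perturbation by $\rho$ of the marginal at a single node at depth $k$, viewed through the multilevel cavity equation obtained by unfolding the tree to depth $k$.

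Next I would convert the analysis into the ``correct'' weighted quantity. A general $\rho \in \mathcal{X}_0$ decomposes pointwise (in the boundary variable) into stochastically positive and negative parts, so Proposition~\ref{prop:Sinc}(1) gives the fundamental single-step estimate
\[
\bigl\|R^{\tmmathbf{\eta}}_{\rho,\nu_\infty,\ldots,\nu_\infty}\bigl(\nu_\infty^{\tmmathbf{\eta}}\bigr)\bigr\|_{\tmop{TV}} \;\leq\; \Gamma \int \nu_\infty^{\tmmathbf{\eta}}(\mathd \tmmathbf{\sigma}_0) \, \bigl|\rho^{\tmmathbf{\sigma}_0}\bigr|(\tmmathbf{\sigma} \bullet \mathbf{1}),
\]
applied separately to each part. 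Combined with the $\gamma$-contraction of $(I - R^{\tmmathbf{\eta}}_{\nu_\infty,\ldots,\nu_\infty})^{-1}$ on $\mathcal{M}_0$ from Proposition~\ref{prop:Scont}(2), this reduces the iteration to tracking the weighted quantity $\int \nu_\infty^{\tmmathbf{\eta}}(\mathd\tmmathbf{\sigma}) |\rho^{\tmmathbf{\sigma}}|(\tmmathbf{\sigma} \bullet \mathbf{1})$ rather than raw total variation. The two-sided density bound (\ref{dnu:bounded}) lets me replace $\nu_\infty^{\tmmathbf{\eta}}$ by $\varphi$ throughout, absorbing constants while also bridging the mismatch between $\|\cdot\|_{1,\mathcal{X}}$ on the input side and $\|\cdot\|_{\infty,\mathcal{X}}$ on the output side.

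The factor $b^{-k}$ is then harvested from Theorem~\ref{thm:decay}(1): the quantity $\kappa^{|z|}$ in (\ref{kappa}) controls precisely the $(\tmmathbf{\sigma} \bullet \mathbf{1})$-type sensitivity of the spin at depth $|z|$ to a change at the root boundary condition, which by the unfolded interpretation above matches one-to-one with the propagation of the perturbation $\rho$ at depth $k$ back up to the root. Plugging $\kappa(+) \leq 1/b$ along the distinguished branch of depth $k$ gives the decay $b^{-k}$, and combining with the single-step bound yields the claimed inequality $\|D^k(\rho)\|_{\infty,\mathcal{X}} \leq C b^{-k} \|\rho\|_{1,\mathcal{X}}$.

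The main obstacle is the faithful translation between the total-variation norms in which $D^k$ naturally acts and the $(\tmmathbf{\sigma} \bullet \mathbf{1})$-type quantities on which the decay exponent $\kappa$ operates. In the classical case, $D$ is a scalar and the bound $1/b$ is a direct consequence of a single calculation at the fixed point; in the quantum case, $D$ acts on the infinite-dimensional space $\mathcal{X}_0$, and Propositions~\ref{prop:Sinc} and~\ref{prop:Scont} must serve as dictionaries between the two kinds of norms. Carrying constants through $k$ iterations without accumulating a factor that beats the $b^{-k}$ gain is the delicate accounting at the heart of the argument.
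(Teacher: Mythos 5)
Your proposal contains a fatal circularity: you extract the factor $b^{-k}$ from Theorem~\ref{thm:decay}(1), the statement $\kappa(+) \leqslant 1/b$, but in the paper the logical arrow runs the other way — Theorem~\ref{thm:decay}(1) is \emph{derived} from Theorem~\ref{thm:Dkrho} via the estimate $\|\nu_{i+j}^+ - \nu_{i+j}^-\|_{\infty,\mathcal X} \leqslant (C b^{-j}+\varepsilon)\|\nu_i^+-\nu_i^-\|_{\infty,\mathcal X}$, which is itself an application of (\ref{Dk:norm}). There is no independent proof of $\kappa(+)\leqslant 1/b$ anywhere in the paper; all the work is concentrated in establishing the $b^{-k}$ decay for $D^k$, so you cannot harvest it from the downstream result. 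Moreover, even ignoring the logical order, the ``one-to-one matching'' you invoke between the downward influence $\kappa^{|z|}$ (root to depth $|z|$) and the upward sensitivity measured by $D^k(\rho)$ (a depth-$k$ perturbation propagated to the root) is a nontrivial reversibility assertion that you do not prove.

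What the paper actually does is the positive-eigenvector trick hinted at in the remark about Krein--Rutman, not an appeal to the decay exponent. From (\ref{Phib}) one obtains the exact identity $\nu_{n+1}^{\tmmathbf{\eta}} - \nu_\infty^{\tmmathbf{\eta}} = b\,D^{\tmmathbf{\eta}}(\nu_n - \nu_\infty) + r_n^{\tmmathbf{\eta}}$ with a quadratically small remainder, hence by iteration $\nu_{n+k} - \nu_\infty = b^k D^k(\nu_n - \nu_\infty) + r_{k,n}$. The sequence $\nu_n - \nu_\infty$ thus serves as an asymptotic eigendirection of $bD$ with eigenvalue of modulus $\leqslant 1$, and the decisive $b^{-k}$ appears from unwinding this relation rather than from Theorem~\ref{thm:decay}. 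To turn this into an operator-norm bound for arbitrary $\rho$, the paper uses the two-sided comparison of stochastically positive elements of $\mathcal X_0$ via the scalar functional $\int \varphi(\mathd\tmmathbf{\sigma}_0)\,\rho^{\tmmathbf{\sigma}_0}(\tmmathbf{\sigma}\bullet\tmmathbf{1})$, namely the upper bound (\ref{Dtv}) and the lower bound (\ref{Df}), both of which you did identify. These are used to sandwich a rescaled $D(\rho)$ stochastically below $(\nu_n-\nu_\infty)/\|\nu_n-\nu_\infty\|_{\infty,\mathcal X}$; since $D$ preserves stochastic positivity, applying $D^{k-2}$ to this domination and invoking $\nu_{n+k-2} - \nu_\infty = b^{k-2}D^{k-2}(\nu_n - \nu_\infty) + r_{k-2,n}$ on the majorant (along a subsequence where the ratio $\|\nu_{n+k-2}-\nu_\infty\|_{\infty,\mathcal X}/\|\nu_n-\nu_\infty\|_{\infty,\mathcal X}$ stays bounded) yields the decisive $b^{-(k-2)}$ factor. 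A final application of (\ref{Dtv}), and the decomposition of a general $\rho$ into two stochastically positive pieces $\|\rho^{\tmmathbf{\sigma}_0}\|_{\tmop{TV}}(\delta_+ - \delta_-) \pm \rho^{\tmmathbf{\sigma}_0}$, finishes the argument. So your single-step ingredients and your intuition about the role of $b^{-k}$ are on target, but the mechanism that produces that factor must be the internal recursion on $\nu_n$, not the external decay theorem.
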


\begin{remark}
The Krein-Rutman theorem {\cite{KR50}} states that the spectral radius of a strictly positive and compact operator is an eigenvalue corresponding to a positive eigenvector. Although we do not use this theorem, it is remarkable that in the proof below the asymptotic direction of the stochastically positive conditional measure $\nu_n-\nu_\infty$ helps to control the norm of $D$.\end{remark}

\begin{proof}
  We first establish two preliminary results. Let $\rho \in \mathcal{X}_0$ and
  assume that $\rho^{\tmmathbf{\sigma}_0}$ is stochastically positive, for
  every $\tmmathbf{\sigma}_0 \in \Sigma$. Then ({\tmem{i}}) in Proposition
  \ref{prop:Sinc} together with ({\tmem{ii}}) in Proposition \ref{prop:Scont}
  imply that, for any $\tmmathbf{\eta} \in \Sigma$,
  \begin{eqnarray}
    \|D^{\tmmathbf{\eta}} (\rho)\|_{\tmop{TV}} & \leqslant & \frac{\Gamma}{1 -
    \gamma} \int \nu_{\infty}^{\tmmathbf{\eta}} (\mathd \tmmathbf{\sigma}_0)
    \rho^{\tmmathbf{\sigma}_0} (\tmmathbf{\sigma} \bullet \tmmathbf{1})
    \nonumber\\
    & \leqslant & \frac{C \Gamma}{1 - \gamma} \int \varphi (\mathd
    \tmmathbf{\sigma}_0) \rho^{\tmmathbf{\sigma}_0} (\tmmathbf{\sigma} \bullet
    \tmmathbf{1}) .  \label{Dtv}
  \end{eqnarray}
  where in the second inequality we have used (\ref{dnu:bounded}). On the
  other hand, with the same assumption on $\rho$, if we take $f : \Sigma
  \rightarrow {\mathbf R}$ increasing with $f (+) = - f (-) = 1$, then
  \begin{eqnarray}
    D^{\tmmathbf{\eta}} (\rho) (f) & \geqslant & c \int \varphi (\mathd
    \tmmathbf{\sigma}_0) \rho^{\tmmathbf{\sigma}_0} (\tmmathbf{\sigma} \bullet
    \tmmathbf{1})  \label{Df}
  \end{eqnarray}
  for some $c > 0$ depending only on $\beta, \tmmathbf{h}, \lambda, b$.
  Indeed, each term in the expansion of $\left( I -
  R^{\tmmathbf{\eta}}_{\nu_{\infty}, \ldots, \nu_{\infty}} \right)^{- 1}$
  leaves the set of stochastically positive $\rho \in \mathcal{X}_0$
  invariant. So
  \begin{eqnarray*}
    D^{\tmmathbf{\eta}} (\rho) (f) & \geqslant & R^{\tmmathbf{\eta}}_{\rho,
    \nu_{\infty}, \ldots, \nu_{\infty}} \left( \nu_{\infty}^{\tmmathbf{\eta}}
    \right) \left( f \right)\\
    & \geqslant & 2 c \int \nu_{\infty}^{\tmmathbf{\eta}} (\mathd
    \tmmathbf{\sigma}_0) \rho^{\tmmathbf{\sigma}_0} (\tmmathbf{\sigma} \bullet
    \tmmathbf{1})
  \end{eqnarray*}
  where at the second line we use ({\tmem{ii}}) in Proposition
  \ref{prop:Sinc}. Inequality (\ref{Df}) follows then from
  (\ref{dnu:bounded}). Note that already inequalities (\ref{Dtv}) and
  (\ref{Df}) show that $\|D (\rho)\|_{\infty, \mathcal{X}} \leqslant C \|D
  (\rho)\|_{1, \mathcal{X}}$ when $\rho^{\tmmathbf{\sigma}_0}$ is
  stochastically positive, for every $\tmmathbf{\sigma}_0 \in \Sigma$.
  
  The derivative $D$ is closely related to the way $\nu_n$ converges to its
  limit $\nu_{\infty}$. As a consequence of (\ref{Phib}) in
  Theorem~\ref{thm:phi},
  \begin{eqnarray}
    \nu^{\tmmathbf{\eta}}_{n + 1} - \nu^{\tmmathbf{\eta}}_{\infty} & = &
    \Phi^{\tmmathbf{\eta}}_{\nu_n, \ldots, \nu_n} - \Phi_{\nu_{\infty},
    \ldots, \nu_{\infty}}^{\tmmathbf{\eta}} \nonumber\\
    & = & \Phi^{\tmmathbf{\eta}}_{\nu_n, \ldots, \nu_n} -
    \Phi^{\tmmathbf{\eta}}_{\nu_n, \ldots, \nu_n, \nu_{\infty}} + \nonumber\\
    &  & \Phi^{\tmmathbf{\eta}}_{\nu_n, \ldots, \nu_n, \nu_{\infty}} -
    \Phi^{\tmmathbf{\eta}}_{\nu_n, \ldots, \nu_n, \nu_{\infty}, \nu_{\infty}}
    + \nonumber\\
    &  & \vdots \nonumber\\
    &  & \Phi^{\tmmathbf{\eta}}_{\nu_n, \nu_{\infty}, \ldots, \nu_{\infty}} -
    \Phi_{\nu_{\infty}, \ldots, \nu_{\infty}}^{\tmmathbf{\eta}} \nonumber\\
    & = & \left( I - R^{\tmmathbf{\eta}}_{\nu_n, \ldots, \nu_n} \right)^{- 1}
    R^{\tmmathbf{\eta}}_{\nu_n - \nu_{\infty}, \nu_n, \ldots, \nu_n} \left(
    \Phi^{\tmmathbf{\eta}}_{\nu_n, \ldots, \nu_n, \nu_{\infty}} \right) +
    \nonumber\\
    &  & \vdots \nonumber\\
    &  & \left( I - R^{\tmmathbf{\eta}}_{\nu_n, \nu_{\infty}, \ldots,
    \nu_{\infty}} \right)^{- 1} R^{\tmmathbf{\eta}}_{\nu_n - \nu_{\infty},
    \nu_{\infty}, \ldots, \nu_{\infty}} \left(
    \Phi^{\tmmathbf{\eta}}_{\nu_{\infty}, \ldots, \nu_{\infty}, \nu_{\infty}}
    \right) \nonumber\\
    & = & bD^{\tmmathbf{\eta}} (\nu_n - \nu_{\infty}) + r_n^{\tmmathbf{\eta}}
    \label{bD}
  \end{eqnarray}
  with $\|r_n^{\tmmathbf{\eta}} \|_{\tmop{TV}} \leqslant C \| \nu_n -
  \nu_{\infty} \|_{\infty, \mathcal{X}}^2$ for some finite $C$, since
  $R^{\tmmathbf{\eta}}_{\rho_1, \ldots, \rho_b} (\rho)$ is multilinear and
  bounded, cf.~(\ref{R:norm}). It follows by induction that, for every $k
  \geqslant 1$,
  \begin{eqnarray}
    \nu_{n + k} - \nu_{\infty} & = & b^k D^k (\nu_n - \nu_{\infty}) + r_{k, n}
    \text{ as } n \rightarrow \infty,  \label{bkDk}
  \end{eqnarray}
  where $\|r_{k, n} \|_{\infty, \mathcal{X}} \leqslant C_k \| \nu_n -
  \nu_{\infty} \|_{\infty, \mathcal{X}}^2$.
  
  Now we consider $\rho \in \mathcal{X}_0$ such that
  $\rho^{\tmmathbf{\sigma}_0} \succ 0$ for every $\tmmathbf{\sigma}_0 \in
  \Sigma$. Without loss of generality we assume that $\left\| \rho
  \right\|_{1, \mathcal{X}} = 1$. From (\ref{bD}) we know that
  $\nu_n^{\tmmathbf{\eta}} - \nu^{\tmmathbf{\eta}}_{\infty}$ is close to the
  image by $b D^{\tmmathbf{\eta}}$ of $\nu_{n - 1}^{\tmmathbf{\eta}} -
  \nu^{\tmmathbf{\eta}}_{\infty}$ as $n \rightarrow \infty$. According to
  (\ref{Dtv}) and (\ref{Df}) this implies the existence of $c > 0$ such that,
  for all $n$ large enough, for every $f$ increasing with $f \left( \pm 1
  \right) = \pm 1$,
  \begin{eqnarray*}
    \inf_{\tmmathbf{\eta}} (\nu_n^{\tmmathbf{\eta}} -
    \nu^{\tmmathbf{\eta}}_{\infty}) (f) & \geqslant & c \sup_{\tmmathbf{\eta}}
    \left\| \nu_n^{\tmmathbf{\eta}} - \nu^{\tmmathbf{\eta}}_{\infty}
    \right\|_{\tmop{TV}} = c \left\| \nu_n - \nu_{\infty} \right\|_{\infty,
    \mathcal{X}} .
  \end{eqnarray*}
  From (\ref{Dtv}) and the assumption that
  $\left\| \rho  \right\|_{1, \mathcal{X}} = 1$
   it follows that
  \begin{eqnarray*}
    \sup_{\tmmathbf{\eta}} \|D^{\tmmathbf{\eta}} (\rho)\|_{\tmop{TV}} &
    \leqslant & C' = \frac{C \Gamma}{1-\gamma}
  \end{eqnarray*}
  According to the last two displays, for every $n$ large enough, for all
  $\tmmathbf{\eta} \in \Sigma$,
  \begin{eqnarray*}
    \frac{c}{2 C'} D^{\tmmathbf{\eta}} (\rho) & \prec &
    \frac{\nu_n^{\tmmathbf{\eta}} - \nu^{\tmmathbf{\eta}}_{\infty}}{\left\|
    \nu_n - \nu_{\infty} \right\|_{\infty, \mathcal{X}}} .
  \end{eqnarray*}
  Call $c' = c / \left(2 C' \right)$. As $D$ preserves stochastic positivity,
  \begin{eqnarray*}
    D^{k - 2} \left( \frac{\nu_n - \nu_{\infty}}{\left\| \nu_n - \nu_{\infty}
    \right\|_{\infty, \mathcal{X}}} - c' D (\rho) \right) & = & \frac{1}{b^{k
    - 2}}  \frac{\nu_{n + k - 2} - \nu_{\infty} - r_{k - 2, n}}{\left\| \nu_n
    - \nu_{\infty} \right\|_{\infty, \mathcal{X}}} - c' D^{k - 1} \left( \rho
    \right)
  \end{eqnarray*}
  is also stochastically positive ($r_{k - 2, n}$ was defined at (\ref{bD})).
  It follows that, for all $\tmmathbf{\eta} \in \Sigma$,
  \begin{eqnarray*}
    \left( D^{k - 1} \right)^{\tmmathbf{\eta}} \left( \rho \right)
    (\tmmathbf{\sigma} \bullet \tmmathbf{1}) & \leqslant & \frac{1}{c' b^{k -
    2}}  \frac{\left\| \nu_{n + k - 2} - \nu_{\infty} \right\|_{\infty,
    \mathcal{X}} + \left\| r_{k - 2, n} \right\|_{\infty,
    \mathcal{X}}}{\left\| \nu_n - \nu_{\infty} \right\|_{\infty, \mathcal{X}}}
    .
  \end{eqnarray*}
  for any $n$ large enough. We can obviously find a subsequence of $n$ along
  which $\left\| \nu_{n + k - 2} - \nu_{\infty} \right\|_{\infty, \mathcal{X}}
  \leqslant \left\| \nu_n - \nu_{\infty} \right\|_{\infty, \mathcal{X}}$, so
  taking $\liminf_n$ shows that
  \begin{eqnarray*}
    \left( D^{k - 1} \right)^{\tmmathbf{\eta}} \left( \rho \right)
    (\tmmathbf{\sigma} \bullet \tmmathbf{1}) & \leqslant & \frac{1}{c' b^{k -
    2}} .
  \end{eqnarray*}
  According to (\ref{Dtv}) we have
  \begin{eqnarray*}
    \left\| D^k \left( \rho \right) \right\|_{\infty, \mathcal{X}} & \leqslant
    & \sup_{\tmmathbf{\eta} \in \Sigma} \frac{C \Gamma}{1 - \gamma} \int
    \varphi (\mathd \tmmathbf{\sigma}_0) \left( D^{k - 1}
    \right)^{\tmmathbf{\sigma}_0} \left( \rho \right) (\tmmathbf{\sigma}
    \bullet \tmmathbf{1})
  \end{eqnarray*}
  and therefore, for some different $C < \infty$
  \begin{eqnarray}
    \left\| D^k \left( \rho \right) \right\|_{\infty, \mathcal{X}} & \leqslant
    & \frac{C}{b^k}  \label{Dk:norm:pos}
  \end{eqnarray}
  for all $k \geqslant 1$, all $\rho \in \mathcal{X}_0$ \ with $\left\| \rho
  \right\|_{1, \mathcal{X}} = 1$, such that $\rho^{\tmmathbf{\sigma}_0} \succ
  0$ for every $\tmmathbf{\sigma}_0 \in \Sigma$. Finally we extend
  (\ref{Dk:norm:pos}) to any $\rho \in \mathcal{X}_0$ with $\left\| \rho
  \right\|_{1, \mathcal{X}} = 1$. Consider
  \begin{eqnarray*}
    \rho_1^{\tmmathbf{\sigma}_0} & = & \left\| \rho^{\tmmathbf{\sigma}_0}
    \right\|_{\tmop{TV}} \left( \delta_+ - \delta_- \right) +
    \rho^{\tmmathbf{\sigma}_0}\\
    \rho_2^{\tmmathbf{\sigma}_0} & = & \left\| \rho^{\tmmathbf{\sigma}_0}
    \right\|_{\tmop{TV}} \left( \delta_+ - \delta_- \right) -
    \rho^{\tmmathbf{\sigma}_0} .
  \end{eqnarray*}
  Note that both $\rho_1^{\tmmathbf{\sigma}_0}$ and
  $\rho_2^{\tmmathbf{\sigma}_0}$ are stochastically positive for any
  $\tmmathbf{\sigma}_0 \in \Sigma$. For instance, if $f : \Sigma \rightarrow
  {\mathbf R}$ is increasing, then $\rho_1^{\tmmathbf{\sigma}_0} \left( f
  \right) = \left( f \left( + \right) - f \left( - \right) \right) \left\|
  \rho^{\tmmathbf{\sigma}_0} \right\|_{\tmop{TV}} + \rho^{\tmmathbf{\sigma}_0}
  \left( f \right) \geqslant 0$. According to the triangular inequality (note
  that $\left\| \rho_1 \right\|_{1, \mathcal{X}} \leqslant 2$) and to
  (\ref{Dk:norm:pos}),
  \begin{eqnarray*}
    \|D^k (\rho)\|_{\mathcal{X}}^{\infty} & \leqslant & \frac{4 C}{b^k}
  \end{eqnarray*}
  and we are done.
\end{proof}

\section{Proof of the main Theorems}

\subsection{Proof of Theorem \ref{thm:decay}}

Without loss of generality, we only consider here the case $x = 0$ in the
definition of $\kappa$. So we fix some $l \geqslant 0$ and $z \in
\mathbbm{T}_l^b$. We denote by $k = \left| z \right|$ the depth of $z$. Now we
consider the subtree of $\mathbbm{T}_l^b$ issued from $r$ that contains $z$,
and root it at $z$. This is a regular tree with $b + 1$ children at $z$ and
$b$ children otherwise (except on the leaves). Its depth is not uniform and
ranges between $\min \left( l - k, k \right)$ and $l + k$. The boundary
condition is uniformly plus, except on the leaf $r$ where it is $+$ or $-$.

Let $(z_0 = r, z_1, \ldots, z_k = z)$ be a path from $r$ to $z$. For any $i
\in \left\{ 1, \ldots, k - 1 \right\}$, we call $\nu_i^{\pm, \tmmathbf{\xi}}$
the conditional distribution $\mu \left( \tmmathbf{\sigma}_{z_i} \in \cdummy |
\tmmathbf{\sigma}_{z_{i + 1}} =\tmmathbf{\xi}, \tmmathbf{\sigma}_r = \pm
\right)$ and $\rho_{\left( i, j \right)}^{\tmmathbf{\xi}} = \mu \left(
\tmmathbf{\sigma}_{y_j} \in \cdummy | \tmmathbf{\sigma}_{z_{i + 1}}
=\tmmathbf{\xi} \right)$ where $y_j$ is the $j$-th children of $z_{i + 1}$ for
$j = 2, \ldots, b$, if we consider $z_i$ as its first children. We have,
\begin{eqnarray*}
  \nu_i^{\pm} & = & \tmmathbf{\Phi}_{\nu_{i - 1}^{\pm}, \rho_{\left( i - 1, 2
  \right)}, \ldots, \rho_{\left( i - 1, b \right)}} \text{, \ } i \leqslant k
  - 1
\end{eqnarray*}
and also, if we call $\nu_k^{\pm, 0}$ the marginal distribution $\mu \left(
\tmmathbf{\sigma}_z \in \cdummy | \tmmathbf{\sigma}_r = \pm \right)$ and
$\rho_j^{\tmmathbf{\xi}} = \mu \left( \tmmathbf{\sigma}_{y_j} \in \cdummy |
\tmmathbf{\sigma}_z =\tmmathbf{\xi} \right)$ where $y_j$ is the $j$-th
children of $z$ for $j = 2, \ldots, b + 1$, apart from $z_1$, then
\begin{eqnarray*}
  \nu_k^{\pm, 0} & = & \tmmathbf{\Phi}^0_{\nu^{\pm}_{k - 1}, \rho_2, \ldots,
  \rho_{b + 1}}
\end{eqnarray*}
(note the $b + 1$ parameters in the cavity equation).

Now we claim that, given a neighborhood $V \subset \mathcal{X}_{1, +}$ of
$\nu_{\infty}$ in the $\left\| . \right\|_{\infty, \mathcal{X}}$ norm, for all
$i$ such that $\min \left( i, l - i \right)$ is large enough depending on $V$,
every $\rho_{\left( i, j \right)}$ and $\nu_i^{\pm}$ lies in $V$. This is
clear for $\rho_{\left( i, j \right)}$ and $\nu_i^+$ as it is the marginal
distribution at the root of a large subtree with identical plus boundary
condition. As $\left\| \nu_{i + 1}^+ - \nu_{i + 1}^- \right\|_{\infty,
\mathcal{X}} \leqslant \gamma \left\| \nu_i^+ - \nu_i^- \right\|_{\infty,
\mathcal{X}}$ where $\gamma < 1$ as been defined in (\ref{gamma}), the same
holds for $\nu_i^-$.

According to (\ref{Phib}), for all $\varepsilon > 0$, for all $i$ with $\min
\left( i, l - i \right)$ large enough depending on $\varepsilon$ but not on
$k, l$,
\begin{eqnarray*}
  \left\| \nu_{i + 1}^+ - \nu_{i + 1}^- - D \left( \nu_i^+ - \nu_i^- \right)
  \right\|_{\infty, \mathcal{X}} & \leqslant & \varepsilon \left\| \nu_i^+ -
  \nu_i^- \right\|_{\infty, \mathcal{X}}
\end{eqnarray*}
where $D$, the derivative of the solution of the cavity equation, at the fixed
point $\nu_{\infty}$, along the first parameter, has been defined at
(\ref{D}). This clearly extends as follows. Fix $j \in \mathbbm{N}^{\star}$
and $\varepsilon > 0$. Then, there is $a = a \left( \varepsilon, j \right)$
that does not depend on $k, l$ such that, for all $i$ with $\min \left( i, l -
i \right) \geqslant a$, then
\begin{eqnarray*}
  \left\| \nu_{i + j}^+ - \nu_{i + j}^- - D^j \left( \nu_i^+ - \nu_i^- \right)
  \right\|_{\infty, \mathcal{X}} & \leqslant & \varepsilon \left\| \nu_i^+ -
  \nu_i^- \right\|_{\infty, \mathcal{X}} .
\end{eqnarray*}
According to Theorem \ref{thm:Dkrho} we conclude that, under the same
conditions,
\begin{eqnarray*}
  \left\| \nu_{i + j}^+ - \nu_{i + j}^- \right\|_{\infty, \mathcal{X}} &
  \leqslant & \left( \frac{C}{b^j} + \varepsilon \right) \left\| \nu_i^+ -
  \nu_i^- \right\|_{\infty, \mathcal{X}}
\end{eqnarray*}
where $C < \infty$ depends only on $\beta, \tmmathbf{h}, \lambda$. From
$\left\| \nu_{i + 1}^+ - \nu_{i + 1}^- \right\|_{\infty, \mathcal{X}}
\leqslant \gamma \left\| \nu_i^+ - \nu_i^- \right\|_{\infty, \mathcal{X}}$ it
follows that
\begin{eqnarray*}
  \left\| \nu_k^{+, 0} - \nu_k^{-, 0} \right\|_{\tmop{TV}} & \leqslant &
  \left( \frac{C}{b^j} + \varepsilon \right)^{\left[ \left( k - 2 a \right) /
  j \right]}
\end{eqnarray*}
which proves the first statement of the Theorem.

Now we consider $\beta, \tmmathbf{h}, \lambda$ in the uniqueness regime. Let
$\tmmathbf{\tau} \in \Sigma^{\mathbbm{T}_{\infty}^b}$ and denote
$\mu^{\tmmathbf{\tau}}_l$ the Gibbs measure on $\mathbbm{T}_l^b$ with
$\tmmathbf{\tau}$ acting as a boundary condition on the leaves. Similarly to
the proof of the first point of Proposition \ref{prop:Sinc}, we have, for any
$y, z \in \mathbbm{T}_l^b$ with $y$ the ancestor of $z$, and every
$\tmmathbf{\eta} \in \Sigma$,
\begin{eqnarray*}
  \left\| \mu^+_l \left( \tmmathbf{\sigma}_z \in \cdummy | \tmmathbf{\sigma}_y
  =\tmmathbf{\eta} \right) - \mu^{\tmmathbf{\tau}}_l \left(
  \tmmathbf{\sigma}_z \in \cdummy | \tmmathbf{\sigma}_y =\tmmathbf{\eta}
  \right) \right\|_{\tmop{TV}} \text{ \ \ } \leqslant \text{ \ \ \ \ \ \ \ \ \
  \ \ \ \ \ \ \ \ \ \ \ \ \ \ } &  & \\
  \Gamma \sum_{x \text{ child of } z} \left\| \mu^+_l \left(
  \tmmathbf{\sigma}_x \in \cdummy | \tmmathbf{\sigma}_y =\tmmathbf{\eta}
  \right) - \mu^-_l \left( \tmmathbf{\sigma}_x \in \cdummy |
  \tmmathbf{\sigma}_y =\tmmathbf{\eta} \right) \right\|_{\tmop{TV}} . &  & 
\end{eqnarray*}
This proves that, as before, every $\rho_{\left( i, j \right)}$ and
$\nu_i^{\pm}$ lies in a given neighborhood $V \subset \mathcal{X}_{1, +}$ of
$\nu_{\infty}$ in the $\left\| . \right\|_{\infty, \mathcal{X}}$ norm, for all
$i$ such that $\min \left( i, l - i \right)$ is large enough depending on $V$
but not on $\tmmathbf{\tau}$. The rest of the argument is identical.

\subsection{Proof of Theorem \ref{thm:mixing}}

Let $\tmmathbf{\tau} \in \Sigma^{\mathbbm{T}_{\infty}^b}$ be such that $b
\gamma \kappa \left( \tmmathbf{\tau} \right) < 1$. We recall that, from
Theorem \ref{thm:decay}, the plus boundary condition always satisfies this
requirement, and that, if $\beta, \tmmathbf{h}, \lambda$ are in the uniqueness
region, we could choose any $\tmmathbf{\tau}$.

\subsubsection{Spectral gap}

Following {\cite{MSW04}} (see Section 3 and the proof of Theorem 4.3), for the
proof of the first part of Theorem \ref{thm:mixing} it is enough to verify
(see Equation (14) in {\cite{MSW04}}) that for all $\varepsilon > 0$, for all
$k \geqslant 1$ large enough depending on $\varepsilon$, for all $l \geqslant
k$, for any $g \in L^2 \left( \Sigma, \mathd \varphi \right)$ and all
$\tmmathbf{\eta} \in \Sigma$,
\begin{eqnarray}
  \tmop{Var}_{\mu_l} \left( \mu_l \left( g \left( \tmmathbf{\sigma}_r \right)
  | \left\{ \tmmathbf{\sigma}_z \right\}_{\left| z \right| = k} \right)
  \right) & \leqslant & \left( \left( 1 + \varepsilon \right) \kappa \gamma b
  \right)^{k/2} \tmop{Var}_{\mu_l} \left( g \left( \tmmathbf{\sigma}_r \right)
  \right)  \label{var:mix}
\end{eqnarray}
where $\mu_l$ is the Gibbs measure on $\mathbbm{T}_l^b$ with boundary
condition $\tmmathbf{\tau}$ on the leaves and extra field $\tmmathbf{\eta}$
acting on the root. In what follows, all the bounds will be uniform in
$\tmmathbf{\eta}$.

Let us denote $K (\tmmathbf{\sigma}_r, .)$ the Radon-Nikodym derivative of the
measure $\mu_l \left( \mu_l \left( \tmmathbf{\sigma}_r' \in \cdummy | \left\{
\tmmathbf{\sigma}_z \right\}_{\left| z \right| = k} \right) |
\tmmathbf{\sigma}_r \right)$ with respect to the marginal distribution of the
root $\nu = \mu_l (\tmmathbf{\sigma}_r \in \cdummy)$. The kernel $K$ is
constructed as follows: given $\tmmathbf{\sigma}_r$ sample the spins at
distance $k$ from the root. Then take these spins and $\tmmathbf{\eta}$ acting
on top of the root as a boundary condition on $\mathbbm{T}_k^b$ and sample
again $\tmmathbf{\sigma}_r'$. It is useful to remark that $K$ is uniformly
bounded. The Cauchy-Schwarz inequality implies that
\begin{eqnarray*}
 \tmop{Var}_{\mu_l}\left( \mu_l \left( g \left( \tmmathbf{\sigma}_r \right)
  \left| \left\{ \tmmathbf{\sigma}_z \right. \right\}_{\left| z \right| = k} \right)
  \right)
&  = & \tmop{Cov}_{\mu_l}\left( \mu_l \left( \left. \mu_l \left( g(\tmmathbf{\sigma}_r) | \left\{
\tmmathbf{\sigma}_z \right\}_{\left| z \right| = k} \right) \right|
\tmmathbf{\sigma}_r \right), g \left( \tmmathbf{\sigma}_r \right) \right) \\
  & \leqslant &  \tmop{Var}_{\mu_l}\left( \mu_l \left( g \left( \tmmathbf{\sigma}_r \right)
  | \left\{ \tmmathbf{\sigma}_z \right\}_{\left| z \right| = k} \right)
  \right)^{1/2}
\tmop{Var}_{\mu_l}\left( g \left( \tmmathbf{\sigma}_r \right) \right)^{1/2}.
\end{eqnarray*}
Then
\begin{gather*}
 \tmop{Var}_{\mu_l}\left( \mu_l \left( g \left( \tmmathbf{\sigma}_r \right)
  | \left\{ \tmmathbf{\sigma}_z \right\}_{\left| z \right| = k} \right)
  \right)  \\
  =  \tmop{Var}_{\mu_l} \left( \int K (\tmmathbf{\sigma}_r,
  \tmmathbf{\sigma}_r') g (\tmmathbf{\sigma}'_r) \mathd \nu
  (\tmmathbf{\sigma}'_r) \right)\\
   =  \frac{1}{2} \nu \otimes \nu \left( \left( \int K
  (\tmmathbf{\sigma}_r^1, \tmmathbf{\sigma}_r') g (\tmmathbf{\sigma}'_r)
  \mathd \nu (\tmmathbf{\sigma}'_r) - \int K (\tmmathbf{\sigma}_r^2,
  \tmmathbf{\sigma}_r') g (\tmmathbf{\sigma}'_r) \mathd \nu
  (\tmmathbf{\sigma}'_r) \right)^2 \right)\\
  \leqslant  \frac{1}{2} \nu \otimes \nu \left( \int \left( K
  (\tmmathbf{\sigma}_r^1, \tmmathbf{\sigma}_r') - K (\tmmathbf{\sigma}_r^2,
  \tmmathbf{\sigma}_r') \right)^2 \mathd \nu (\tmmathbf{\sigma}'_r) \times
  \int \left( g (\tmmathbf{\sigma}'_r) - \mu \left( g \right) \right)^2 \mathd
  \nu (\tmmathbf{\sigma}'_r) \right)\\
   = \frac{1}{2} \tmop{Var}_{\mu_l} (g) \times \nu \otimes \nu \otimes \nu \left[
  \left( K (\tmmathbf{\sigma}_r^1, \tmmathbf{\sigma}_r^3) - K
  (\tmmathbf{\sigma}_r^2, \tmmathbf{\sigma}_r^3) \right)^2 \right]
\end{gather*}
so (\ref{var:mix}) would follow from
\begin{eqnarray*}
  \nu \otimes \nu \otimes \nu \left[ \left( K (\tmmathbf{\sigma}_r^1,
  \tmmathbf{\sigma}_r^3) - K (\tmmathbf{\sigma}_r^2, \tmmathbf{\sigma}_r^3)
  \right)^2 \right] & \leqslant & 2 \left( \left( 1 + \varepsilon \right) \kappa
  \gamma b \right)^k
\end{eqnarray*}
From the boundedness of $K$ we conclude that, for some constant $C$,
\begin{gather*}
  \nu \otimes \nu \otimes \nu \left[ \left( K (\tmmathbf{\sigma}_r^1,
  \tmmathbf{\sigma}_r^3) - K (\tmmathbf{\sigma}_r^2, \tmmathbf{\sigma}_r^3)
  \right)^2 \right]  \\\leqslant  4 C 
\sup_{\tmmathbf{\sigma}_r^1,\tmmathbf{\sigma}_r^2 \in \Sigma}   
  \left\| K (\tmmathbf{\sigma}_r^1, .) \nu
  - K (\tmmathbf{\sigma}_r^2, .) \nu \right\|_{\tmop{TV}}\\
   \leqslant  8 C \sup_{\tmmathbf{\rho} \in \Sigma} \left\| K (+, .) \nu -
  K (\tmmathbf{\rho}, .) \nu \right\|_{\tmop{TV}}.
\end{gather*}
The DLR property, together with the definition of $\Gamma$ at
(\ref{mu:Gamma}), imply that for ordered marginals the TV distance is
comparable with difference of expectation of $\sigma \bullet \tmmathbf{1}$.
Consequently,

\begin{gather*}
[ \left\| K (+, .) \nu - K (\tmmathbf{\rho}, .) \nu \right\|_{\tmop{TV}}
   \\\leqslant  C \left[ \mu_l \left( \mu_l \left( \tmmathbf{\sigma}_r'
  \bullet \tmmathbf{1} | \left\{ \tmmathbf{\sigma}_z \right\}_{\left| z
  \right| = k} \right) | \tmmathbf{\sigma}_r = + \right) - \mu_l \left( \mu_l
  \left( \tmmathbf{\sigma}_r' \bullet \tmmathbf{1} | \left\{
  \tmmathbf{\sigma}_z \right\}_{\left| z \right| = k} \right) |
  \tmmathbf{\sigma}_r =\tmmathbf{\rho} \right) \right]
\end{gather*}
which is clearly maximum if we take $\tmmathbf{\rho}= -$. Now, we argue that
the former difference is bounded by
\begin{eqnarray}
  \left[ \sum_{z : \left| z \right| = k} \mu_l \left( \tmmathbf{\sigma}_z
  \bullet \tmmathbf{1} | \tmmathbf{\sigma}_r = + \right) - \mu_l \left(
  \tmmathbf{\sigma}_z \bullet \tmmathbf{1} | \tmmathbf{\sigma}_r = - \right)
  \right] \times 2 \Gamma \gamma^{k - 1} &  &  \label{eq:diffmag}
\end{eqnarray}
For proving this we need only a slight adaptation of the proof of (ii) in
Claim 4.4 in {\cite{MSW04}}. Consider two spin configurations $\tmmathbf{\xi},
\tmmathbf{\xi}'$ that differ at a single position $z$ with $\left| z \right| =
k$. We can easily construct a coupling of $\mu_l \left( \tmmathbf{\sigma}_r
\in \cdummy | \left\{ \tmmathbf{\sigma}_z =\tmmathbf{\xi}'_z \right\}_{\left|
z \right| = k} \right)$ and $\mu_l \left( \tmmathbf{\sigma}_r \in \cdummy |
\left\{ \tmmathbf{\sigma}_z =\tmmathbf{\xi}_z \right\}_{\left| z \right| = k}
\right)$ for which the two variables differ with probability at most
$\gamma^{k - 1} \Gamma \left\| \tmmathbf{\xi}' -\tmmathbf{\xi} \right\|_1$
according to the definition of $\Gamma$ in Proposition \ref{prop:gamma} and to
that of $\gamma$ at (\ref{gamma}). Now we consider a monotone
coupling of $\left\{ \tmmathbf{\sigma}_z \right\}_{\left| z \right| = k}$
corresponding to the conditions $\tmmathbf{\sigma}_r = \pm$. By applying the
former coupling to an interpolating sequence between these spin configurations
as in proof of (ii) in Claim 4.4 in {\cite{MSW04}}, we conclude the proof of
(\ref{eq:diffmag}).

Finally, $\mu_l \left( \tmmathbf{\sigma}_z \bullet \tmmathbf{1} |
\tmmathbf{\sigma}_r = + \right) - \mu_l \left( \tmmathbf{\sigma}_z \bullet
\tmmathbf{1} | \tmmathbf{\sigma}_r = - \right) \leqslant \left( 1 +
\varepsilon \right)^k \kappa^k$ for all $k \geqslant 0$ large enough according
to the definition of $\kappa$. The proof of the first part of Theorem
\ref{thm:mixing} is complete.

\subsubsection{Mixing time}

We first establish an intermediate step. We fix $l$ and consider the Gibbs
measure on $\mathbbm{T}_l^b$ with boundary condition $\tmmathbf{\tau}$ on the
leaves of $\mathbbm{T}_l^b$ and additional field $\tmmathbf{\eta} \in \Sigma$
at the root. Now we turn to the dynamics. Consider as a starting configuration
the identically plus configuration. Similarly we could consider the
identically minus configuration. Let $P_t^+$ be the distribution of the spin
configuration at time $t$ for the dynamics. We denote by
$h_r^{\tmmathbf{\eta}} \left( t \right)$ the Radon-Nikodym derivative of
$P_t^+$ with respect to $\mu$. Note that $h_r^{\tmmathbf{\eta}} \left( 0
\right) =\tmmathbf{1}_{\left\{ + \right\}} / \mu \left( + \right)$ and
$h_r^{\tmmathbf{\eta}} \left( t \right) = P_t h_r^{\tmmathbf{\eta}} \left( 0
\right)$ because $P_t$ is self-adjoint in $L^2 \left( \mu \right)$.

We also define $T_r = \sup_{\tmmathbf{\eta}} \min \{t : \tmop{Var}
(h_r^{\tmmathbf{\eta}} \left( t \right)) \leqslant 1\}$. Similarly, given $x
\in \mathbbm{T}_l^b \setminus \left\{ r \right\}$, we consider the dynamics
censored everywhere except on the subtree rooted at $x$ and call $h_x^+ \left(
t \right)$ the resulting Radon-Nikodym derivative with respect to the Gibbs
measure conditioned on being plus outside the subtree of $x$. We then define
$T_x = \min \left\{ t : \tmop{Var} \left( h_x^+ \left( t \right) \right)
\leqslant 1 \right\}$.

Now we prove that there exists a constant $t_0$ independent of $l$ such that
\begin{eqnarray}
  T_x & \leqslant & \max_{y \text{ child of } x} T_y + t_0 .  \label{eq:Txt0}
\end{eqnarray}

For this purpose we use censoring (Proposition \ref{prop:censoring}) together
with the first point of Theorem \ref{thm:mixing}. For simplicity we only
consider the case $x = r$. We censor for time $t = \max_{y \text{ child of }
r} T_y$ the root and then run the uncensored dynamics for an extra time $t_0$
to be determined later. Let $\bar{h}_r^{\tmmathbf{\eta}} \left( t \right)$ be
the Radon-Nikodym derivative of the corresponding distribution at time $t$.
Then,
\begin{eqnarray*}
  \tmop{Var} \left( h_r^{\tmmathbf{\eta}} \left( t + t_0 \right) \right) &
  \leqslant & \tmop{Var} \left( \bar{h}_r^{\tmmathbf{\eta}} \left( t + t_0
  \right) \right)\\
  & = & \tmop{Var} \left( P_{t_0}  \bar{h}_r^{\tmmathbf{\eta}} \left( t
  \right) \right)\\
  & \leqslant & e^{- 2 \tmop{gap} \times t_0 } \tmop{Var} \left(
  \bar{h}_t^{\tmmathbf{\eta}} \left( t \right) \right) .
\end{eqnarray*}
Now we prove that $\tmop{Var} \left( \bar{h}_r^{\tmmathbf{\eta}} \left( t
\right) \right)$ is bounded uniformly in $l$. By construction we have
\begin{eqnarray*}
  \bar{h}_r^{\tmmathbf{\eta}} \left( t \right) \left( \tmmathbf{\sigma}
  \right) & = & \frac{\tmmathbf{1}_{\left\{ + \right\}} \left(
  \tmmathbf{\sigma}_r \right)}{\mu \left( \tmmathbf{\sigma}_r = + \right)}
  \prod_{y \text{ child of } r} h_y^+ \left( t \right) \left(
  \tmmathbf{\sigma}_{\mathbbm{T}_y} \right)
\end{eqnarray*}
where $\tmmathbf{\sigma}_{\mathbbm{T}_y}$ is the restriction of
$\tmmathbf{\sigma}$ to the subtree rooted at $y$. Finally, we remark that
\begin{eqnarray*}
  \tmop{Var} \left( \bar{h}_r^{\tmmathbf{\eta}} \left( t \right) \right) &
  \leqslant & \mu \left( \left( \bar{h}_r^{\tmmathbf{\eta}} \left( t \right)
  \right)^2 \right)\\
  & = & \mu (\tmmathbf{\sigma}_r = +) \mu \left( \left(
  \bar{h}_r^{\tmmathbf{\eta}} \left( t \right) \right)^2 | \tmmathbf{\sigma}_r
  = + \right)\\
  & = & \frac{1}{\mu (\tmmathbf{\sigma}_r = +)} \prod_{y \text{ child of } r}
  \mu \left( \left( h_y^+ \left( t \right) \left(
  \tmmathbf{\sigma}_{\mathbbm{T}_y} \right) \right)^2 | \tmmathbf{\sigma}_r =
  + \right)\\
  & = & \frac{1}{\mu (\tmmathbf{\sigma}_r = +)} \prod_{y \text{ child of } r}
  \left[ \tmop{Var}_{\mu \left( \cdummy | \tmmathbf{\sigma}_r = + \right)}
  \left( h_y^+ \left( t \right) \left( \tmmathbf{\sigma}_{\mathbbm{T}_y}
  \right) \right) + 1 \right]
\end{eqnarray*}
which is smaller than $2^b / \mu \left( \tmmathbf{\sigma}_r = + \right)$
according to the definition of $t$. If we take $t_0$ such that
\begin{eqnarray*}
  ^{} \frac{2^b \exp \left( - 2 \tmop{gap} \times t_0 \right)}{\mu \left(
  \tmmathbf{\sigma}_r = + \right)} & \leqslant & 1
\end{eqnarray*}
then $t_0$ is bounded uniformly in $l$ and we are done. In conclusion, we have
shown that $T_r \leqslant lt_0$.

\begin{remark}
  We observe that the recursive inequality (\ref{eq:Txt0}) is analogous to the
  one obtained in {\cite{MSW04}} (see Lemma 5.8 there) for the logarithmic
  Sobolev constant. In our context, this constant is easily seen to be
  infinite because of configurations with an arbitrary large number of flips.
\end{remark}

We know have to consider the dynamics starting from an arbitrary spin
configuration $\tmmathbf{\xi}$. Choose $t = T_r + cl$ for some $c > 0$ to be
chosen later on. We have of course
\begin{eqnarray*}
  \|P^{\tmmathbf{\xi}}_t - \mu \|_{\tmop{TV}} & \leqslant &
  \|P^{\tmmathbf{\xi}}_t - P^+_t \|_{\tmop{TV}} + \|P^+_t - \mu \|_{\tmop{TV}}
  .
\end{eqnarray*}
On one hand,
\begin{eqnarray}
  \|P^+_t - \mu \|_{\tmop{TV}} & = & \left\| h_r^{\tmmathbf{\eta}} \left( t
  \right) - 1 \right\|_{L^1 (\mu)} \leqslant \left\| h_r^{\tmmathbf{\eta}}
  \left( t \right) - 1 \right\|_{L^2 (\mu)} \leqslant e^{- \tmop{gap} (t -
  T_r)} \leqslant e^{- \tmop{gap} \times cl} \label{TVp}
\end{eqnarray}
according to the definition of $T_r$.

For the remaining part $\|P^{\tmmathbf{\xi}}_t - P^+_t \|_{\tmop{TV}}$ we use
a coupling argument. Call $s = T_r + cl / 2$. We consider $\Psi_s$ a monotone
coupling of $P^{\tmmathbf{\xi}}_s \prec P^+_s$ and denote $\left(
\tmmathbf{\sigma}, \tmmathbf{\sigma}^+ \right)$ its variables. According to
Markov's inequality, we have
\begin{eqnarray*}
  \Psi_s (\exists x, \tmmathbf{\sigma}_x \bullet 1 \leqslant
  \tmmathbf{\sigma}_x^+ \bullet 1 - \varepsilon) & \leqslant & \sum_x \Psi_s
  (\tmmathbf{\sigma}_x \bullet 1 \leqslant \tmmathbf{\sigma}_x^+ \bullet 1 -
  \varepsilon)\\
  & \leqslant & \varepsilon^{- 1} \sum_x P^+_s (\tmmathbf{\sigma}_x \bullet
  1) - P^{\tmmathbf{\xi}}_s (\tmmathbf{\sigma}_x \bullet 1)\\
  & \leqslant & \varepsilon^{- 1} \sum_x P^+_s (\tmmathbf{\sigma}_x \bullet
  1) - P^-_s (\tmmathbf{\sigma}_x \bullet 1)\\
  & \leqslant & \frac{2 b^l \beta}{\varepsilon} (\|P^+_s - \mu \|_{\tmop{TV}}
  +\|P^-_s - \mu \|_{\tmop{TV}}) .
\end{eqnarray*}
Now we take $\varepsilon = b^{- 2 l}$. According to the last display and to (\ref{TVp}) the probability $\Psi_s (\exists x, \tmmathbf{\sigma}_x \bullet 1\leqslant
  \tmmathbf{\sigma}_x^+ \bullet 1 - \varepsilon)$ can be made arbitrary small by taking $c$ large. So with high probability under $\Psi_s$, at not position the spin configurations differ by more than $\varepsilon$ in $L^1 \left( \left[ 0, \beta \right] \right)$ distance at time $s= T_r + cl / 2 = t-cl / 2$.   In the remaining time $cl / 2$, we
update the spin configurations at the same positions according to an optimal
coupling of the marginals. Using Proposition \ref{prop:gamma} we see that, as
long as the spin configurations differ at every $x$ by at most $\varepsilon$
in $L^1 \left( \left[ 0, \beta \right] \right)$ distance, each update put
locally the same spin with probability at least $1 - \Gamma \varepsilon$.
Therefore we conclude that $\|P^{\tmmathbf{\xi}}_t - P^+_t \|_{\tmop{TV}}$ can
be made arbitrarily small if $c$ is large enough. This concludes the proof
that the mixing time is bounded by $Cl$.

{\bf Acknowledgements.} We wish to thank Guilhem Semerjian and Francesco Zamponi for very stimulating discussions about their paper. One of us (M.~Wouts) thanks Jean-Fran\c{c}ois Quint for useful discussion about positive operators, and acknowledges the generous hospitality by University of Rome 3.

\end{document}